	\renewcommand{\phi}{\varphi}
	\providecommand{\corollaryname}{Corollary}
	\providecommand{\definitionname}{Definition}
	\providecommand{\examplename}{Example}
	\providecommand{\lemmaname}{Lemma}
	\providecommand{\propositionname}{Proposition}
	\providecommand{\remarkname}{Remark}
	\providecommand{\theoremname}{Theorem}
	\providecommand{\setupname}{Setup}
	\providecommand{\conjecturename}{Conjecture}
	\providecommand{\questionname}{Question}
	\theoremstyle{plain}
		\newtheorem{thm}{\protect\theoremname}[section] 
		\newtheorem{prop}[thm]{\protect\propositionname}
		\newtheorem{lem}[thm]{\protect\lemmaname}
		\newtheorem{cor}[thm]{\protect\corollaryname}
	\theoremstyle{definition}
		\newtheorem{defn}[thm]{\protect\definitionname}
		\newtheorem{example}[thm]{\protect\examplename}
		\newtheorem{setup}[thm]{\setupname}
	\theoremstyle{remark}
		\newtheorem{rem}[thm]{\protect\remarkname}
	\numberwithin{figure}{section}
	\numberwithin{equation}{section}
	\newenvironment{acknowledgements}{
		\begin{abstract}} {\end{abstract}}
	\tikzset{commutative diagrams/.cd, 
		mysymbol/.style = {start anchor=center, end anchor = center, draw = none}}
	\newcommand{\commutes}[2][\circ]{\arrow[mysymbol]{#2}[description]{#1}}
	\newcommand{\PB}[2][\square]{\arrow[mysymbol,pos=0.27]{#2}[description]{#1}}
	\let\amph=& 
	\newcommand{\BC}{\mathbb{C}}
	\newcommand{\BE}{\mathbb{E}}
	\newcommand{\BR}{\mathbb{R}}
	\newcommand{\CA}{\mathcal{A}}
	\newcommand{\CB}{\mathcal{B}}
	\newcommand{\CC}{\mathcal{C}}
	\newcommand{\CH}{\mathcal{H}}
	\newcommand{\CR}{\mathcal{R}}
	\newcommand{\CS}{\mathcal{S}}
	\newcommand{\CT}{\mathcal{T}}
	\newcommand{\CU}{\mathcal{U}}
	\newcommand{\CV}{\mathcal{V}}
	\newcommand{\CW}{\mathcal{W}}
	\newcommand{\CX}{\mathcal{X}}
	\newcommand{\CY}{\mathcal{Y}}
	\newcommand{\LF}{\mathrm{LF}}
	\newcommand{\SH}{\mathscr{H}}
		\newcommand{\Ab}{\operatorname{\mathsf{Ab}}\nolimits}
		\newcommand{\BOR}{\operatorname{\mathsf{BOR}}\nolimits}
		\newcommand{\Proj}{\operatorname{\mathsf{Proj}}\nolimits}
		\newcommand{\Inj}{\operatorname{\mathsf{Inj}}\nolimits}
		\newcommand{\op}{\mathrm{op}}
		\newcommand{\Ker}{\operatorname{Ker}\nolimits}
		\newcommand{\iso}{\cong}
		\newcommand{\Hom}{\operatorname{Hom}\nolimits}
		\newcommand{\into}{\hookrightarrow}
		\newcommand{\onto}{\rightarrow\mathrel{\mkern-14mu}\rightarrow}
		\newcommand{\Cone}{\operatorname{Cone}\nolimits}
		\newcommand{\CoCone}{\operatorname{CoCone}\nolimits}
		\newcommand{\heart}{\overline{\CH}}
		\newcommand{\fs}{\mathfrak{s}}
	\newcommand{\deff}{\coloneqq}
	\newcommand{\eps}{\varepsilon}
	\newcommand{\ol}[1]{\overline{#1}}
\begin{document}
\title[Integral and quasi-abelian hearts of twin cotorsion pairs]{Integral and quasi-abelian hearts of twin cotorsion pairs on extriangulated categories}
\author[Hassoun]{Souheila Hassoun}
    \address{D\'{e}partment de math\'{e}matiques \\
        Universit\'{e} de Sherbrooke \\ 
        Sherbrooke, Qu\'{e}bec, J1K 2R1 \\ 
        Canada
        }
    \email{souheila.hassoun@usherbrooke.ca}
\author[Shah]{Amit Shah}
    \address{School Mathematics, Statistics and Physics\\
        Newcastle University\\
        Newcastle upon Tyne, NE1 7RU\\
        United Kingdom
        }
    \email{amit.shah@newcastle.ac.uk}
\date{\today}
\keywords{
Extriangulated category, 
twin cotorsion pair, 
heart, 
quasi-abelian category, 
integral category, 
localisation.
}
\subjclass[2010]{18E05; Secondary 18E10, 18E30, 18E35, 18E40, 16G99}
%
\begin{abstract}
It was shown recently that the heart $\heart$ of a twin cotorsion pair $((\CS,\CT),(\CU,\CV))$ on an extriangulated category is semi-abelian. 
We provide a sufficient condition for the heart to be integral and another for the heart to be quasi-abelian. 
This unifies and improves the corresponding results for exact and triangulated categories. 
Furthermore, if $\CT=\CU$, then we show that the Gabriel-Zisman localisation of $\heart$ at the class of its regular morphisms is equivalent to the heart of the single twin cotorsion pair 
$(\CS,\CT)$. 
This generalises and improves the known result for triangulated categories, thereby providing new insights in the exact setting. 
\end{abstract}
\maketitle
\section{Introduction}
\label{sec:Introduction}
\emph{Extriangulated categories} (see Definition \ref{def:extriangulated-category}) were introduced in \cite{NakaokaPalu-extriangulated-categories-hovey-twin-cotorsion-pairs-and-model-structures} as a simultaneous generalisation of exact and triangulated categories. 
Many results 
(see, for example, 
\cite{IyamaNakaokaPalu-Auslander-Reiten-theory-in-extriangulated-categories}, 
\cite{ZhouZhu-triangulated-quotient-categories-revisited}, 
\cite{HuZhangZhou-gorenstein-homological-dimensions-for-extriangulated-categories}, 
\cite{Liu-localisations-of-the-hearts-of-cotorsion-pairs}, 
\cite{LiuYNakaoka-hearts-of-twin-cotorsion-pairs-on-extriangulated-categories},
\cite{ZhaoHuang-phantom-ideals-and-cotorsion-pairs-in-extriangulated-categories}, 
\cite{Zhou-filtered-objects-in-extriangulated-categories}, 
\cite{ZhuZhuang-grothendieck-groups-in-extriangulated-categories}, 
\cite{ZhuZhuang-tilting-subcategories-in-extriangulated-categories}) 
that hold for exact and triangulated categories have been \emph{unified}, i.e.\ shown to hold for extriangulated categories in general. 
Consequently, these results also hold for a range of categories that are neither exact nor triangulated, because the class of extriangulated categories is closed under certain operations (see, for example, \cite{NakaokaPalu-extriangulated-categories-hovey-twin-cotorsion-pairs-and-model-structures}, \cite{HerschendLiuYNakaoka-n-exangulated-categories}). 
Furthermore, since exact and triangulated categories play a large role in representation theory, it is natural to ask if extriangulated categories can provide new insights in this area of mathematics. 
Indeed, there have been novel applications of extriangulated categories in this field; see, for example, 
\cite{Pressland-a-categorification-of-acyclic-principal-coefficient-cluster-algebras}, 
\cite{PadrolPaluPilauPlamondon-associahedra-for-finite-type-cluster-algebras-and-minimal-relations-between-g-vectors}. 


As well as introducing extriangulated categories, Nakaoka and Palu also defined \emph{cotorsion pairs} (see Definition \ref{def:cotorsion-pair}) on extriangulated categories in \cite{NakaokaPalu-extriangulated-categories-hovey-twin-cotorsion-pairs-and-model-structures}. 
As an analogue of torsion theories defined in \cite{Dickson-torsion-theory-for-abelian-cats}, cotorsion pairs were first introduced for the category of
abelian groups in \cite{Salce-cotorsion-theories-for-abelian-groups}. 
These have since been considered in other contexts, such as in 
module (e.g.\ \cite{EklofTrlifaj-how-to-make-ext-vanish}), 
abelian (e.g.\ \cite{Hovey-cotorsion-pairs-model-category-structures-and-representation-theory}), 
exact (e.g.\ \cite{Gillespie-model-structures-on-exact-categories}) 
and triangulated (e.g.\ \cite{Nakaoka-cotorsion-pairs-I}) categories. 
(We note here that the cotorsion pairs of a Krull-Schmidt, $\Hom$-finite triangulated $k$-category ($k$ a field) are in one-to-one correspondence with its \emph{torsion theories} in the sense of \cite{IyamaYoshino-mutation-in-tri-cats-rigid-CM-mods}; see also \cite{BeligiannisReiten-homolog-homotop-aspects}.) 
In \cite{Nakaoka-cotorsion-pairs-I} Nakaoka used cotorsion pairs on triangulated categories in order to generalise the homological constructions in 
\cite{BeilinsonBernsteinDeligne-perverse-sheaves} and 
\cite{KoenigZhu-from-tri-cats-to-abelian-cats}; see also 
\cite{BuanMarshReiten-cluster-tilted-algebras}, 
\cite{KellerReiten-ct-algebras-are-gorenstein-stably-cy}. 

In \cite{BuanMarsh-BM2}, Buan and Marsh generalised the situations of \cite{KoenigZhu-from-tri-cats-to-abelian-cats} and \cite{BuanMarshReiten-cluster-tilted-algebras} in a way not covered by the results of \cite{Nakaoka-cotorsion-pairs-I}. 
However, by using the notion of a 
\emph{twin cotorsion pair} (a pair of cotorsion pairs satisfying some condition) 
and its 
\emph{heart} (a certain subfactor category of the ambient triangulated category), 
Nakaoka was able to generalise this work of Buan and Marsh; see \cite{Nakaoka-twin-cotorsion-pairs}. 
Aside from the triangulated setting, pairs of cotorsion pairs have appeared in the literature for abelian categories 
(e.g.\ \cite{Hovey-cotorsion-pairs-and-model-categories}) 
and exact categories (e.g.\ \cite{LiuY-hearts-of-twin-cotorsion-pairs-on-exact-categories}). 

In the more general context of extriangulated categories, twin cotorsion pairs (see Definition \ref{def:twin-cotorsion-pair}) were introduced and used in \cite{NakaokaPalu-extriangulated-categories-hovey-twin-cotorsion-pairs-and-model-structures} to give a bijective correspondence between certain twin cotorsion pairs and admissible model structures. 
Subsequently, although Liu and Nakaoka mainly focussed on cotorsion pairs in \cite{LiuYNakaoka-hearts-of-twin-cotorsion-pairs-on-extriangulated-categories}, they defined and studied hearts (see Definition \ref{def:heart-and-subcategories-associated-to-twin-cotorsion-pair}) of twin cotorsion pairs on extriangulated categories. 
Moreover, several results from the exact and triangulated cases were unified; in particular, it was shown that the heart of a twin cotorsion pair is always \emph{semi-abelian} (see Definition \ref{def:semi-abelian-category} and Theorem \ref{thm:heart-is-semi-abelian}). 

In this article we study further the heart of a twin cotorsion pair on an extriangulated category, and we provide more unification of the exact and triangulated settings. 
More precisely, we give a sufficient condition for the heart of a twin cotorsion pair on an extriangulated category to be \emph{integral} and another for the heart to be \emph{quasi-abelian} (see Definitions \ref{def:integral-category} and \ref{def:quasi-abelian-category}, respectively). 
Examples of categories that are both integral and quasi-abelian include: 
any abelian category; 
the category of topological abelian groups (see \cite[\S 2]{Rump-almost-abelian-cats}); 
and the quotient category $\CC/[\CX_{R}]$, where $\CC$ 
is a cluster category (in the sense of \cite{BMRRT-cluster-combinatorics}), 
$R\in\CC$ is a rigid object and $[\CX_{R}]$ the ideal of morphisms factoring through $\CX_{R}=\Ker(\CC(R,-))$ (see \cite[Cor.\ 3.10]{BuanMarsh-BM2} and \cite[Thm.\ 5.5]{Shah-quasi-abelian-hearts-of-twin-cotorsion-pairs-on-triangulated-cats}). 
However, the classes of integral and quasi-abelian categories do not coincide; see Examples \ref{exa:Hausdorff-topological-abelian-groups} and \ref{exa:integral-not-quasi-abelian-category-example}. 

Suppose $((\CS,\CT),(\CU,\CV))$ is a twin cotorsion pair on an extriangulated category $\CB$, and let $\heart$ denote its heart. 
In Theorem \ref{thm:quasi-abelian-heart} we provide a sufficient condition for $\heart$ to be quasi-abelian. 
Suppose also that $\CB$ has enough projectives and enough injectives (see Definition \ref{def:projectives-ProjB-has-enough-projectives-duals}). 
We prove that $\heart$ is an integral category whenever $\CU\subseteq\CS * \CT$ and the subcategory of projective objects in $\CB$ is contained in $\CW = \CT\cap\CU$ (see Theorem \ref{thm:integral-heart}). 
Consequently, we show that if $\CT\subseteq\CU$ or $\CU\subseteq \CT$, then $\heart$ is integral and quasi-abelian (see Corollary \ref{cor:heart-is-integral-and-quasi-abelian-if-U-in-T-or-T-in-U}). 

Specialising further in \S\ref{sec:localisation-of-an-integral-heart}, we assume $\CT=\CU$. 
Thus, the heart $\heart$ is integral by Corollary \ref{cor:heart-is-integral-and-quasi-abelian-if-U-in-T-or-T-in-U}. 
Recall that a morphism is \emph{regular} if it is both a monomorphism and an epimorphism, and let $\CR$ denote the class of regular morphisms in $\heart$. 
Then the (Gabriel-Zisman) localisation $\heart_{\CR}$ (see \cite{GabrielZisman-calc-of-fractions}) of $\heart$ at $\CR$ is an abelian category by \cite[Thm.\ 4.8]{BuanMarsh-BM2}. 
The heart $\ol{\SH}_{(\CS,\CT)}$ of the single cotorsion pair $(\CS,\CT)$ is also an abelian category (see \cite[Thm.\ 3.2]{LiuYNakaoka-hearts-of-twin-cotorsion-pairs-on-extriangulated-categories}), and we prove that $\heart_{\CR}$ and $\ol{\SH}_{(\CS,\CT)}$ are equivalent (see Theorem \ref{thm:T-is-U-implies-localisation-of-H-at-regular-morphisms-is-equivalent-to-heart-of-cotorsion-pair-S-T}). 

Since we do not assume $\CB$ to be a Krull-Schmidt category, 
Theorem \ref{thm:quasi-abelian-heart} 
(respectively, Theorem \ref{thm:integral-heart}, Theorem \ref{thm:T-is-U-implies-localisation-of-H-at-regular-morphisms-is-equivalent-to-heart-of-cotorsion-pair-S-T}) 
gives an improvement of  
\cite[Thm.\ 7.4]{LiuY-hearts-of-twin-cotorsion-pairs-on-exact-categories} 
(respectively, 
\cite[Thm.\ 6.2]{LiuY-hearts-of-twin-cotorsion-pairs-on-exact-categories}, 
\cite[Thm.\ 4.8]{Shah-quasi-abelian-hearts-of-twin-cotorsion-pairs-on-triangulated-cats}).

This paper is organised as follows. 
In \S\ref{sec:preliminaries} we recall the necessary background material on non-abelian categories, extriangulated structures and twin cotorsion pairs. 
In \S\ref{sec:the-case-when-H-is-integral} and \S\ref{sec:the-case-when-H-is-quasi-abelian}, respectively, we give sufficient assumptions for the heart of a twin cotorsion pair to be integral and quasi-abelian, respectively. 
And in \S\ref{sec:localisation-of-an-integral-heart} we show there is an explicit equivalence from the heart of a cotorsion pair $(\CS,\CT)$ to a localisation of the heart of a twin cotorsion pair of the form $((\CS,\CT),(\CT,\CV))$. 

\section{Preliminaries}\label{sec:preliminaries}
%
%
\subsection{Preabelian categories}
\label{sub:preabelian-categories}
The main results of \S\ref{sec:the-case-when-H-is-integral} and \S\ref{sec:the-case-when-H-is-quasi-abelian} give a sufficient condition for the heart of a twin cotorsion pair on an extriangulated category to be an integral and a quasi-abelian category, respectively. 
We recall briefly these definitions in this section. 
For more details we refer the reader to \cite{Rump-almost-abelian-cats}.
\begin{defn}
\label{def:preabelian-category}
\cite[p.\ 24]{Popescu-abelian-cats-with-apps-to-rings-and-modules}, 
\cite[\S 5.4]{BucurDeleanu-intro-to-theory-of-cats-and-functors} 
A \emph{preabelian} category is an additive category in which every
morphism has a kernel and a cokernel.
\end{defn}
%
%
We will see later that the heart of any twin cotorsion pair on an extriangulated category is always semi-abelian (see Theorem \ref{thm:heart-is-semi-abelian}). 
We recall the definition of such a category now. 
Let $\CA$ be a preabelian category. 
\begin{defn}
\label{def:semi-abelian-category}\cite[p.\ 167]{Rump-almost-abelian-cats}
We call $\CA$ \emph{left semi-abelian} if each morphism $f\colon A\to B$ factorises as $f=ip$ for some monomorphism $i$ and cokernel $p$. 
Dually, we call $\CA$ \emph{right semi-abelian} if each morphism $f$ decomposes as $f=ip$ with $i$ a kernel and $p$ some epimorphism. 
And $\CA$ is called \emph{semi-abelian} when it is both left and right semi-abelian. 
\end{defn}
Now we recall the main definitions of this section. 
%
%
%
\begin{defn}
\label{def:integral-category}\cite[p.\ 168]{Rump-almost-abelian-cats}
The category $\CA$ is called \emph{left integral} if epimorphisms are stable under pullback. 
Dually, $\CA$ is called \emph{right integral} if monomorphisms are stable under pushout. 
And if $\CA$ is both left and right integral, then it is called \emph{integral}.
\end{defn}
\begin{defn}
\label{def:quasi-abelian-category}\cite[p.\ 168]{Rump-almost-abelian-cats}
The category $\CA$ is called \emph{left quasi-abelian} if cokernels are stable under pullback. 
Dually, $\CA$ is called \emph{right quasi-abelian} if kernels are stable under pushout. 
And if $\CA$ is both left and right quasi-abelian, then it is called \emph{quasi-abelian}.
\end{defn}
%
%
%
The following two results are used in the proofs of our main results in \S\S\ref{sec:the-case-when-H-is-integral}--\ref{sec:the-case-when-H-is-quasi-abelian}. 
%
%
%
\begin{prop}\label{prop:semi-abelian-category-is-left-integral-iff-right-integral}
\emph{\cite[p.\ 173, Cor.]{Rump-almost-abelian-cats}} 
A semi-abelian category is left integral if and only if it is right integral.
\end{prop}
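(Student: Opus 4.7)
The plan is to exploit the self-duality of the semi-abelian condition to reduce the biconditional to a single implication. The semi-abelian hypothesis in Definition \ref{def:semi-abelian-category} swaps ``left'' and ``right'' when passing to the opposite category, and the same is true for left/right integrality in Definition \ref{def:integral-category} (as pushouts become pullbacks and monomorphisms become epimorphisms in $\CA^{\op}$). Hence $\CA^{\op}$ is semi-abelian and left integral whenever $\CA$ is semi-abelian and right integral, and vice versa. Consequently, it suffices to prove only that a semi-abelian, left integral category is right integral.

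Suppose then that $\CA$ is semi-abelian and left integral. Let $m\colon A \to B$ be a monomorphism and $g\colon A \to C$ an arbitrary morphism, and form the pushout
\[
\begin{tikzcd}
A \arrow[r, "m"] \arrow[d, "g"] & B \arrow[d, "g'"] \\
C \arrow[r, "m'"] & D.
\end{tikzcd}
\]
The aim is to show $m'$ is monic. Let $k\colon K \to C$ be a kernel of $m'$; the goal becomes $K = 0$. By left semi-abelianness, factor $m' = i p$ with $p\colon C \to C/K$ a cokernel (indeed the cokernel of $k$) and $i\colon C/K \to D$ a monomorphism. It then suffices to prove that $p$ is an isomorphism, or equivalently that $i$ is an isomorphism.

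The key construction is to pull $i$ back along $g'$, obtaining an auxiliary square whose top arrow $i'\colon Q \to B$ is again monic. The commutativity $g' m = m' g = i p g$ produces a canonical morphism $\alpha \colon A \to Q$ with $i' \alpha = m$. The plan is then to show that this auxiliary square is itself the pushout of $m$ along $pg \colon A \to C/K$, and to conclude via a comparison argument (using that $p$ is a cokernel and hence an epimorphism) that $i$ is an isomorphism. The main obstacle will be proving this pushout property: this is exactly where left integrality is invoked, since one needs that the pullback of a cokernel remains a cokernel (in particular, remains epic), which, combined with the factorisations afforded by semi-abelianness, is what forces $i$ to be an isomorphism and hence $K = 0$.
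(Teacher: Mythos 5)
The paper does not actually prove this proposition; it cites Rump {[}p.\ 173, Cor.{]}, so there is no in-paper argument to compare against. Your opening duality reduction is correct and cleanly stated: passing to $\CA^{\op}$ interchanges the left and right variants of both semi-abelianness and integrality, so it does suffice to prove that a semi-abelian, left integral category is right integral.

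The proposed argument for that implication has a fatal flaw, though. After factoring $m' = ip$ with $p$ a cokernel of $k = \Ker m'$ and $i$ a monomorphism, you assert that proving $p$ is an isomorphism is ``equivalently'' proving $i$ is an isomorphism. This is false, and it fails already in the simplest setting: in $\Ab$, push out the monomorphism $\times 2\colon \BZ \to \BZ$ along the identity on $\BZ$. The pushout is $\BZ$ with $m' = \times 2$; here $K = 0$, so $p$ is an isomorphism, yet $i = m'$ is not epic and hence not an isomorphism. So $i$ being an isomorphism is neither necessary nor sufficient for $K = 0$, and your plan, which aims at forcing $i$ to be iso, targets the wrong morphism. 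What you actually need is that $p$ is monic (equivalently iso, since it is a cokernel). There is a second, related confusion in the final step: you invoke left integrality as giving ``that the pullback of a cokernel remains a cokernel.'' That is the definition of \emph{left quasi-abelian}, not left integral; left integrality only yields that the pullback of an epimorphism is an epimorphism, and these are genuinely different even for integral categories (see Example \ref{exa:integral-not-quasi-abelian-category-example}: $\BOR$ is integral but not quasi-abelian). Finally, the claim that the pullback of $i$ along $g'$ coincides with the pushout of $m$ along $pg$ is unjustified; and since $i$ is a monomorphism, pulling $i$ back gives you no purchase for the left-integral hypothesis at all. Any correct version of this strategy would need to bring the cokernel $p$ (or $m'$ itself) into the pullback being taken, and would need to aim at showing $p$, not $i$, is invertible.
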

\begin{prop}\label{prop:semi-abelian-category-is-left-quasi-abelian-iff-right-quasi-abelian}
\emph{\cite[Prop.\ 3]{Rump-almost-abelian-cats}} 
A semi-abelian category is left quasi-abelian if and only if it is right quasi-abelian.
\end{prop}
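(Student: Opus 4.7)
The plan is to reduce the equivalence to a single implication via a duality argument, and then prove that implication by analysing the pushout of a kernel along an arbitrary morphism using the semi-abelian factorisation.

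The axioms for a semi-abelian category are self-dual (the definition symmetrically imposes a mono-plus-cokernel factorisation and a kernel-plus-epi factorisation), and the notions ``left quasi-abelian'' and ``right quasi-abelian'' are mutually dual. Consequently, if the implication from left quasi-abelian to right quasi-abelian holds in every semi-abelian category, then applying it to $\CA^{\op}$ gives the reverse implication for $\CA$. So it suffices to prove that left quasi-abelian implies right quasi-abelian.

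Assume then that $\CA$ is semi-abelian and left quasi-abelian. Take a kernel $k\colon K\to A$, written as $k=\ker c$ for some $c\colon A\to C$, and an arbitrary morphism $g\colon K\to L$. Form the pushout of $k$ along $g$, producing a commutative square with new edges $k'\colon A\to P$ and $g'\colon L\to P$; the goal is to show $g'$ is a kernel. My first step is to apply the right semi-abelian factorisation to write $g=ip$ with $i$ a kernel and $p$ an epimorphism, and then to compute the pushout in two stages, pushing $k$ first along $p$ and afterwards along $i$. The second stage, a pushout of a kernel along a kernel, can be handled by a standard preabelian-category argument, so the essential content is the first stage: pushing a kernel along an epimorphism.

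For that case, the plan is to build an auxiliary diagram in which the pushout square sits alongside a pullback square obtained by pulling the cokernel $c$ back along a suitable morphism into $A$ that is extracted from the pushout data. The left quasi-abelian hypothesis then guarantees that this pullback of $c$ is itself a cokernel, and the induced morphism out of $P$ whose mono-plus-cokernel factorisation uses this cokernel as its cokernel part turns out to have $g'$ as its kernel; this last identification is obtained via the universal property of the pushout. The main obstacle is choosing the auxiliary pullback so that the left quasi-abelian hypothesis applies in the correct place and so that the kernel of the resulting morphism out of $P$ matches $g'$ on the nose. Here the uniqueness (up to isomorphism) of the semi-abelian factorisation is crucial, because it is what forces the various universal-property-induced morphisms to assemble into a commutative diagram; once that diagram has been set up correctly, the verification that $g'$ is the claimed kernel is a formal diagram chase.
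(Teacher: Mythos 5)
The paper cites Rump~\cite[Prop.~3]{Rump-almost-abelian-cats} for this proposition and gives no proof of its own, so I assess the attempt on its merits. Your duality reduction is sound. The decisive gap is stage~2. You assert that the pushout of a kernel along a kernel is again a kernel ``by a standard preabelian-category argument,'' but no such general fact exists: in a preabelian (indeed even a semi-abelian) category kernels need not be closed under composition, and the unconditional pushout facts available (pushouts preserve cokernels; pushouts along split monomorphisms preserve kernels) do not cover a pushout along an arbitrary kernel. The stage~2 statement is a special case of right quasi-abelianness --- the very conclusion being proved --- so taking it for free is circular. Conversely, if stage~2 secretly requires the left quasi-abelian hypothesis, then the factorisation $g=ip$ has not achieved any reduction: the residual kernel case is no easier than the original arbitrary~$g$.

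Stage~1 is also only a plan. The ``suitable morphism'' along which $c$ is pulled back and the ``induced morphism out of $P$'' are never exhibited, and the key claim, that $g'$ is the kernel of the latter, is deferred to ``a formal diagram chase'' that is not carried out; but that identification is exactly the content of the proposition in the epimorphism case and cannot be waved through. To close this one would at least need to take $c=\cok k$, use the pushout universal property to produce $h\colon P\to C$ with $hk'=c$ and $hg'=0$, form the pullback of $c$ along $h$ (so left quasi-abelianness makes the second projection a cokernel), and then analyse the right semi-abelian factorisation of $h$; none of these steps appear explicitly.
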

We conclude this section with some examples. 
In particular, Examples \ref{exa:Hausdorff-topological-abelian-groups} and \ref{exa:integral-not-quasi-abelian-category-example} demonstrate that 
the class of integral categories is not contained in the class of quasi-abelian categories, and 
\emph{vice versa}. 
\begin{example}\label{exa:abelian-implies-integral-and-quasi-abelian}
Every abelian category is both an integral category and a quasi-abelian category.
\end{example}
\begin{example}\label{exa:integral-or-quasi-abelian-implies-semi-abelian}
Rump showed that every integral category and every quasi-abelian category is semi-abelian; see \cite[p.\ 169, Cor.\ 1]{Rump-almost-abelian-cats}. 
\end{example}
\begin{example}\label{exa:Hausdorff-topological-abelian-groups}
The category of Hausdorff topological abelian groups is quasi-abelian, but not integral (see \cite[\S2.2]{Rump-almost-abelian-cats}). 
\end{example}
\begin{example}\label{exa:integral-not-quasi-abelian-category-example}
Let $k\in \{ \BR, \BC\}$. 
Consider the category $\BOR$ of \emph{bornological locally convex spaces} over $k$; see, for example, \cite[p.\ 50]{Hogbe-Nlend-Bornologies-and-functional-analysis}. 
Bonet and Dierolf showed that $\BOR$ is not quasi-abelian in \cite{BonetDierolf-the-pullback-for-bornological-and-ultrabornological-spaces}. 
However, it has recently been shown that $\BOR$ is integral; see \cite[Thm.\ 3.5]{HassounShahWegner-examples-and-non-examples-of-integral-categories}. 
\end{example}
%
%
%
\subsection{Extriangulated categories}
\label{sec:extriangulated-categories}
In this section, we recall the theory of extriangulated categories that we will need. 
See \cite{NakaokaPalu-extriangulated-categories-hovey-twin-cotorsion-pairs-and-model-structures} for more details.
\begin{setup}
Throughout the rest of this paper, $\CB$ denotes an additive category and we assume $\CB$ is equipped with a biadditive functor $\BE \colon \CB^{\op} \times \CB \to \Ab$, where $\Ab$ denotes the category of all abelian groups.
\end{setup}
We note that a biadditive functor was called an `additive bifunctor' in \cite[\S 1.2, p.\ 649]{DraxlerReitenSmaloSolberg-exact-categories-and-vector-space-categories}. 
Moreover, each morphism $f\colon X\to Y$ in $\CB$ gives rise to abelian group homomorphisms 
$\BE(C,f)\colon \BE(C,X)\to \BE(C,Y)$ 
and 
$\BE(f^{\op},A)\colon \BE(Y,A)\to\BE(X,A)$ 
for objects $A,C\in\CB$. 
By abuse of notation, we will write $\BE(f,A)$ instead of $\BE(f^{\op},A)$. 
\begin{defn}
\label{def:extension}
\cite[Def.\ 2.1, Rem.\ 2.2, Def.\ 2.3]{NakaokaPalu-extriangulated-categories-hovey-twin-cotorsion-pairs-and-model-structures} 
An element $\delta$ of $\BE(C,A)$ for some objects $A,C$ in $\CB$ is called an $\BE$-extension, or simply an \emph{extension}.

For morphisms $f\colon A\to X$ and $g\colon Y\to C$ and for an extension $\delta\in\BE(C,A)$, we obtain the new extensions: 
\[
f_{*}\delta:=\BE(C,f)(\delta)\in\BE(C,X)
\hspace{1cm}
\text{and} 
\hspace{1cm}
g^{*}\delta:=\BE(g,A)(\delta)\in\BE(Y,A).
\] 

Let $\delta\in \BE(C,A)$ and $\delta'\in\BE(D,B)$ be any extensions. 
A \emph{morphism of extensions} 
$\delta\to\delta'$ 
is a pair $(f,h)$ of morphisms $f\colon A\to B$ and $h\colon C\to D$ in $\CB$, such that $ f_{*}\delta  = h^{*}\delta'$.
\end{defn}
We use the following facts without reference throughout the remainder of the article. 
Let $\delta\in \BE(C,A)$ be an extension. 
Then any morphism $f\colon A\to X$ in $\CB$ induces a morphism $(f,1_{C})\colon \delta\to f_{*}\delta$ of extensions. 
Similarly, any morphism $g\colon Y\to C$ in $\CB$ gives rise to a morphism $(1_{A},g)\colon g^{*}\delta\to \delta$. 
\begin{defn}\label{def:equivalent-sequences}
\cite[Def.\ 2.7]{NakaokaPalu-extriangulated-categories-hovey-twin-cotorsion-pairs-and-model-structures} 
Let $A,C$ be objects in $\CB$. Two sequences 
$\begin{tikzcd}[column sep=0.7cm]
A \arrow{r}{a}& B\arrow{r}{b} & C 
\end{tikzcd}$ 
and 
$\begin{tikzcd}[column sep=0.7cm]
A \arrow{r}{a'}& B'\arrow{r}{b'} & C 
\end{tikzcd}$ 
of composable morphisms in $\CB$ are said to be \emph{equivalent} if there exists an isomorphism 
$g\colon B\to B'$ such that the diagram 
\[
\begin{tikzcd}
A \arrow{r}{a} \arrow[equals]{d}& B\arrow{r}{b} \arrow{d}{g}[swap]{\iso}& C \arrow[equals]{d}\\
A \arrow{r}{a'} & B'\arrow{r}{b'} & C 
\end{tikzcd}
\] 
commutes. 
This determines an equivalence relation on the class of sequences of the form 
$\begin{tikzcd}[column sep=0.7cm]
A \arrow{r}{}& -\arrow{r}{} & C.
\end{tikzcd}$ 
We denote by 
$[\begin{tikzcd}[column sep=0.7cm]
A \arrow{r}{a}& B\arrow{r}{b} & C 
\end{tikzcd}]$ 
the equivalence class of the sequence 
$\begin{tikzcd}[column sep=0.7cm]
A \arrow{r}{a}& B\arrow{r}{b} & C. 
\end{tikzcd}$
\end{defn}
\begin{setup}
Throughout the rest of this paper, let $\fs$ be a correspondence that, for each pair of objects $A,C$ in $\CB$, assigns to each extension $\delta\in\BE(C,A)$ an equivalence class 
$\fs(\delta) = 
[\begin{tikzcd}[column sep=0.7cm]
A \arrow{r}{a}& B\arrow{r}{b} & C 
\end{tikzcd}]$. 
\end{setup}
Soon we will see that, provided $\fs$ satisfies some conditions, $\fs$ will allow us to visualise the abstract structure of $(\CB,\BE)$. 
The next two definitions make this more precise.
\begin{defn}\label{def:realisation-of-E}
\cite[Def.\ 2.9]{NakaokaPalu-extriangulated-categories-hovey-twin-cotorsion-pairs-and-model-structures} 
Let $\delta\in\BE(C,A)$ and $\delta'\in\BE(C',A')$ be any pair of extensions. 
Suppose $\fs(\delta) = 
[\begin{tikzcd}[column sep=0.7cm]
A \arrow{r}{a}& B\arrow{r}{b} & C 
\end{tikzcd}]$ 
and 
$\fs(\delta') = 
[\begin{tikzcd}[column sep=0.7cm]
A' \arrow{r}{a'}& B'\arrow{r}{b'} & C' 
\end{tikzcd}]$. 
We call $\fs$ a \emph{realisation of $\BE$} if, for all morphisms $(f,h)\colon \delta \to \delta'$ of extensions, there exists $g\colon B\to B'$ such that the diagram 
\[
\begin{tikzcd}
A \arrow{r}{a} \arrow{d}{f}& B\arrow{r}{b} \arrow[dotted]{d}{\exists g} & C \arrow{d}{h}\\
A' \arrow{r}{a'} & B'\arrow{r}{b'} & C'
\end{tikzcd}
\]
commutes. 
In this case, we say $\delta$ is \emph{realised by} 
$\begin{tikzcd}[column sep=0.7cm]
A\arrow{r}{a}& B\arrow{r}{b} & C 
\end{tikzcd}$
and that $(f,h)$ is \emph{realised by} $(f,g,h)$. 
We also say 
$\begin{tikzcd}[column sep=0.7cm]
A\arrow{r}{a}& B\arrow{r}{b} & C
\end{tikzcd}$ 
\emph{realises} $\delta$, 
and $(f,g,h)$ \emph{realises} $(f,h)$. 
\end{defn}
\begin{defn}\label{def:additive-realisation}
\cite[Def.\ 2.10]{NakaokaPalu-extriangulated-categories-hovey-twin-cotorsion-pairs-and-model-structures} 
Suppose $\fs$ is a realisation of $\BE$. 
We call $\fs$ \emph{additive} if the following conditions are satisfied. 
\begin{enumerate}[label=(\roman*)]
	\item For each $A,C\in\CB$, the trivial element $0\in\BE(C,A)$ is realised by the split sequence 
		\[
		\begin{tikzcd}[ampersand replacement = \&, column sep =1.3cm]
		A \arrow{r}{\begin{psmallmatrix} 1_{A}\\0
		\end{psmallmatrix}} \& A\oplus C\arrow{r}{\begin{psmallmatrix} 0 & 1_{C}
		\end{psmallmatrix}} \& C.
		\end{tikzcd}
		\]
	\item Let $\delta\in\BE(C,A)$ and $\delta'\in\BE(C',A')$ be any extensions. 
		Let $i_{X}\colon X\into A\oplus A'$ denote the canonical inclusion for $X\in\{A,A'\}$, and 
		let $p_{Y}\colon C\oplus C' \onto Y$ denote the canonical projection for $Y\in\{C,C'\}$. 
		If $\fs(\delta) = [\begin{tikzcd}[column sep=0.7cm]A \arrow{r}{a}& B\arrow{r}{b} & C 				\end{tikzcd}]$ and
		$\fs(\delta') = [\begin{tikzcd}[column sep=0.7cm]A' \arrow{r}{a'}& B'\arrow{r}{b'} & C' 				\end{tikzcd}]$, 
		then the element 
		\[
		(i_{A})_{*}(p_{C})^{*}\delta + (i_{A'})_{*}(p_{C'})^{*}\delta' \in \BE(C\oplus C',A\oplus A')
		\]
		is realised by the direct sum 
		\[
		\begin{tikzcd}[ampersand replacement = \&, column sep=1.3cm]
		A \oplus A' \arrow{r}{\begin{psmallmatrix} a & 0 \\ 0 & a'
		\end{psmallmatrix}} \& B \oplus B' \arrow{r}{\begin{psmallmatrix} b & 0 \\ 0 & b'
		\end{psmallmatrix}} \& C \oplus C'.
		\end{tikzcd}
		\]
\end{enumerate}
\end{defn}
We are now in a position to recall the main definition of this section. 
\begin{defn}\label{def:extriangulated-category}
\cite[Def.\ 2.12]{NakaokaPalu-extriangulated-categories-hovey-twin-cotorsion-pairs-and-model-structures} 
Let $\CB$ be an additive category. A triple $(\CB, \BE, \fs)$ is called an \emph{extriangulated category} if the following axioms are satisfied. 
\begin{enumerate}[label=(ET\arabic*), wide=0pt, leftmargin=51pt, labelwidth=41pt, labelsep=10pt, align=right]
	\item\label{ET1} $\BE\colon \CB^{\op} \times \CB \to \Ab$ is a biadditive functor.
	\item\label{ET2} $\fs$ is an additive realisation of $\BE$.
	\item\label{ET3} Let $\delta\in\BE(C,A)$ and $\delta'\in\BE(C',A')$ be any extensions, and  
		suppose $\fs(\delta)=[\begin{tikzcd}[column sep=0.7cm]A \arrow{r}{a}& B\arrow{r}{b} & C 			\end{tikzcd}]$ and
		$\fs(\delta')=[\begin{tikzcd}[column sep=0.7cm]A' \arrow{r}{a'}& B'\arrow{r}{b'} & C' 				\end{tikzcd}]$. 
		For any commutative diagram 
		\[
		\begin{tikzcd}
		A \arrow{r}{a} \arrow{d}{f} & B\arrow{r}{b} \arrow{d}{g} & C \\
		A' \arrow{r}{a'}& B'\arrow{r}{b'} & C' 
		\end{tikzcd}
		\] 
		there exists $h\colon C\to C'$, such that $(f,h)\colon \delta \to \delta'$ is a morphism of 			extensions that is realised by $(f,g,h)$.
	\item[(ET$3^{\op}$)]\label{ET3op} Dual of \ref{ET3}.
	\item\label{ET4} Let $\delta\in\BE(D,A)$ and $\delta'\in\BE(F,B)$ be extensions, realised by 
		$\begin{tikzcd}[column sep=0.7cm]A \arrow{r}{a}& B\arrow{r}{a'} & D \end{tikzcd}$ and
		$\begin{tikzcd}[column sep=0.7cm]B \arrow{r}{b}& C\arrow{r}{b'} & F \end{tikzcd}$, respectively. 
		 Then there exists $E\in\CB$ and an extension $\delta''\in\BE(E,A)$ realised by 
		 $\begin{tikzcd}[column sep=0.7cm]A \arrow{r}{c}& C\arrow{r}{c'} & E \end{tikzcd}$, such that the diagram 
		 \[
		 \begin{tikzcd}
		 A \arrow{r}{a} \arrow[equals]{d}& B\arrow{r}{a'} \arrow{d}{b}& D \arrow{d}{d}\\
		 A \arrow{r}{c} 				& C\arrow{r}{c'} \arrow{d}{b'}& E \arrow{d}{e}\\
								& F \arrow[equals]{r}		& F
		 \end{tikzcd}
		 \]
		 commutes in $\CB$ and satisfies:
		 \begin{enumerate}[label=(\roman*)]
		 	\item $\fs((a')_{*}\delta') 
			= [\begin{tikzcd}[column sep=0.7cm]D \arrow{r}{d}& E\arrow{r}{e} & F \end{tikzcd}];$
			\item $d^{*}\delta'' = \delta$; and 
			\item $a_{*}\delta'' = e^{*}\delta'$.
		 \end{enumerate}
	\item[(ET$4^{\op}$)]\label{ET4op} Dual of \ref{ET4}.
\end{enumerate}
\end{defn}
\begin{example}\label{exa:examples-of-extriangulated-categories}
\cite[Exam.\ 2.13]{NakaokaPalu-extriangulated-categories-hovey-twin-cotorsion-pairs-and-model-structures} 
Examples of extriangulated categories include: 
any triangulated category; 
extension-closed subcategories of triangulated categories; 
and exact categories that are skeletally small, or have enough projectives or injectives.
\end{example}
\begin{setup}\label{set:B-is-extriangulated}
For the remainder of \S\ref{sec:preliminaries}, we suppose $(\CB,\BE,\fs)$ is an extriangulated category. 
\end{setup}
Now we recall some useful terminology that was introduced in \cite{NakaokaPalu-extriangulated-categories-hovey-twin-cotorsion-pairs-and-model-structures}. 
\begin{defn}\label{def:conflation-inflation-deflation-E-triangle}
\cite[Def.\ 2.15, Def.\ 2.19]{NakaokaPalu-extriangulated-categories-hovey-twin-cotorsion-pairs-and-model-structures} 
A sequence 
$\begin{tikzcd}[column sep=0.7cm]
A \arrow{r}{}& B\arrow{r}{} & C
\end{tikzcd}$ 
of composable morphisms in $\CB$ is called a \emph{conflation} if it realises some extension 
$\delta\in\BE(C,A)$. 
Following \cite{LiuYNakaoka-hearts-of-twin-cotorsion-pairs-on-extriangulated-categories}, in this case we write 
$\begin{tikzcd}[column sep=0.7cm]
A \arrow[tail]{r}{}& B\arrow[two heads]{r}{} & C.
\end{tikzcd}$ 

A morphism $a\in\CB(A,B)$ is called an \emph{inflation} if there exists a conflation of the form 
$\begin{tikzcd}[column sep=0.7cm]
A \arrow[tail]{r}{a}& B\arrow[two heads]{r}{} & C.
\end{tikzcd}$ 
Dually, a morphism $b\in\CB(B,C)$ is called a \emph{deflation} if there exists a conflation of the form 
$\begin{tikzcd}[column sep=0.7cm]
A \arrow[tail]{r}{}& B\arrow[two heads]{r}{b} & C.
\end{tikzcd}$ 

Suppose that we have a morphism $(f,h)\colon \delta \to \delta'$ of extensions realised by the commutative diagram 
\begin{equation}\label{eqn:morphism-of-E-triangles}
\begin{tikzcd}
A \arrow[tail]{r}{a} \arrow{d}{f}& B\arrow[two heads]{r}{b} \arrow{d}{g} & C \arrow{d}{h}\\
A' \arrow[tail]{r}{a'} & B'\arrow[two heads]{r}{b'} & C'
\end{tikzcd}
\end{equation}
in which the top row realises $\delta$ and the bottom row realises $\delta'$. 
Then we call the pair 
$(\begin{tikzcd}[column sep=0.7cm]
A \arrow[tail]{r}{a}& B\arrow[two heads]{r}{b} & C, 
\end{tikzcd} 
\delta)$ 
an \emph{$\BE$-triangle} and denote this by 
$\begin{tikzcd}[column sep=0.7cm]
A \arrow[tail]{r}{a}& B\arrow[two heads]{r}{b} & C \arrow[dashed]{r}{\delta}& {}. 
\end{tikzcd}$ 
Furthermore, we call the triple $(f,g,h)$ a \emph{morphism of $\BE$-triangles} and denote this by
\[
\begin{tikzcd}
A \arrow{r}{a} \arrow{d}{f}& B\arrow{r}{b} \arrow{d}{g} & C \arrow{d}{h} \arrow[dashed]{r}{\delta}& {}\\
A' \arrow{r}{a'} & B'\arrow{r}{b'} & C' \arrow[dashed]{r}{\delta'}& {}
\end{tikzcd}
\]
\end{defn}
We conclude this section with some results that will be used frequently later. 
For the first of these, we recall from \cite[Def.\ 3.1]{NakaokaPalu-extriangulated-categories-hovey-twin-cotorsion-pairs-and-model-structures} that by the Yoneda Lemma, each $\BE$-extension gives rise to two natural transformations as follows. 
For any $A,C$ in $\CB$ and any extension $\delta\in\BE(C,A)$, there is an induced natural transformation 
$\delta^{\sharp}\colon \CB(A,-) \Rightarrow \BE(C,-)$, 
which is given by 
$(\delta^{\sharp})_{X}(f)=f_{*}\delta$ 
for each object $X\in\CB$ and each morphism $f\colon A\to X$. 
Dually, there is also a natural transformation 
$\delta_{\sharp}\colon \CB(-,C)\Rightarrow \BE(-,A)$, 
which is given by 
$(\delta_{\sharp})_{Y}(g)=g^{*}\delta$ 
for each object $Y\in\CB$ and each morphism $g\colon Y\to C$. 
%
\begin{prop}\label{prop:exact-sequences-from-E-triangle}
\emph{\cite[Cor.\ 3.12]{NakaokaPalu-extriangulated-categories-hovey-twin-cotorsion-pairs-and-model-structures}} 
Suppose 
$\begin{tikzcd}[column sep=0.7cm]
A \arrow[tail]{r}{a}& B\arrow[two heads]{r}{b} & C\arrow[dashed]{r}{\delta} & {}
\end{tikzcd}$ 
is an $\BE$-triangle. 
Then for each $X,Y\in\CB$, the sequences
\[
\hspace*{-0.1cm}
\begin{tikzcd}[column sep=1.07cm]
\CB(C,X) \arrow{r}{-\circ b} &\CB(B,X) \arrow{r}{-\circ a} &\CB(A,X) \arrow{r}{(\delta^{\sharp})_{X}} 
& \BE(C,X) \arrow{r}{\BE(b,X)} &\BE(B,X) \arrow{r}{\BE(a,X)} &\BE(A,X)
\end{tikzcd}
\]
and 
\[ 
\hspace*{-0.1cm}
\begin{tikzcd}[column sep=1.07cm]
\CB(Y,A) \arrow{r}{a\circ -} &\CB(Y,B) \arrow{r}{b\circ -} &\CB(Y,C) \arrow{r}{(\delta_{\sharp})_{Y}} 
& \BE(Y,A) \arrow{r}{\BE(Y,a)} &\BE(Y,B) \arrow{r}{\BE(Y,b)} &\BE(Y,C)
\end{tikzcd}
\]
are exact in $\Ab$.
\end{prop}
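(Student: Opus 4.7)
The plan is to prove the first, contravariant sequence is exact; the second then follows by the same argument applied in the opposite extriangulated category, using (ET$3^{\op}$) and (ET$4^{\op}$). Fix the $\BE$-triangle with $\delta \in \BE(C,A)$ realised by $A \xrightarrow{a} B \xrightarrow{b} C$, and fix $X \in \CB$. A convenient preliminary step is to establish the identities $a_{*}\delta = 0$, $b^{*}\delta = 0$ and $ba = 0$. For $a_{*}\delta = 0$, apply \ref{ET3} to the diagram with top row realising $\delta$, bottom row the split conflation $B \xrightarrow{1_{B}} B \to 0$ (which realises $0 \in \BE(0,B)$ by \ref{ET2}), and verticals $a\colon A \to B$ and $1_{B}\colon B \to B$: the axiom supplies $h\colon C \to 0$ such that $(a,h)\colon \delta \to 0$ is a morphism of extensions, forcing $a_{*}\delta = h^{*}(0) = 0$. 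The identity $b^{*}\delta = 0$ is dual, and $ba = 0$ follows by a similar short application of \ref{ET3}. Consequently each consecutive composite in the sequence vanishes by direct computation: $(fa)_{*}\delta = f_{*}(a_{*}\delta) = 0$, $b^{*}(f_{*}\delta) = f_{*}(b^{*}\delta) = 0$, and $\BE(a,X)\BE(b,X) = (ba)^{*} = 0$.

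The reverse inclusions at the $\CB$-positions exploit the split realisations from \ref{ET2}. For exactness at $\CB(B, X)$, given $g \colon B \to X$ with $ga = 0$, apply \ref{ET3} to
\[
\begin{tikzcd}
A \arrow[r, "a"] \arrow[d, "0"] & B \arrow[r, "b"] \arrow[d, "g"] & C \\
0 \arrow[r] & X \arrow[r, "1_{X}"] & X
\end{tikzcd}
\]
whose bottom row realises $0 \in \BE(X,0)$ via \ref{ET2} and whose left square commutes by the hypothesis $ga = 0$; the axiom supplies $h\colon C \to X$ with $hb = g$. For exactness at $\CB(A, X)$, given $f \colon A \to X$ with $f_{*}\delta = 0$, the extension $f_{*}\delta$ is realised by the split conflation $X \xrightarrow{\iota} X \oplus C \xrightarrow{p} C$; the morphism of extensions $(f, 1_{C})\colon \delta \to f_{*}\delta$ is realised (by the defining property of $\fs$) by a triple $(f, g', 1_{C})$, and the first component of $g'\colon B \to X \oplus C$ factorises $f$ through $a$.

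Exactness at $\BE(C,X)$ is handled by the formally dual argument using (ET$3^{\op}$): given $\eta \in \BE(C,X)$ with $b^{*}\eta = 0$, the splitting of $b^{*}\eta$ translates, via (ET$3^{\op}$), into a pushout relation identifying $\eta$ with $f_{*}\delta$ for some $f\colon A \to X$. The main obstacle is exactness at $\BE(B,X)$: given $\zeta \in \BE(B,X)$ with $a^{*}\zeta = 0$, one must produce $\eta \in \BE(C,X)$ with $b^{*}\eta = \zeta$. Realise $\zeta$ by $X \xrightarrow{u} Y \xrightarrow{v} B$. The vanishing $a^{*}\zeta = 0$ furnishes a section $s\colon A \to Y$ with $vs = a$, and then an octahedron-style application of \ref{ET4} (or its dual) to the realisations of $\zeta$ and $\delta$ produces an $\BE$-triangle $X \to Z \to C$ realising some $\eta \in \BE(C,X)$, with the compatibility clauses \ref{ET4}(i)--(iii) identifying $b^{*}\eta$ with $\zeta$. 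This final invocation of the octahedral-type axiom is the genuinely non-formal step; everything before it reduces to diagram chases with \ref{ET2} and \ref{ET3} (and their duals).
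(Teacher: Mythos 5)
The paper does not prove this proposition; it is cited directly as \cite[Cor.\ 3.12]{NakaokaPalu-extriangulated-categories-hovey-twin-cotorsion-pairs-and-model-structures}, so there is no in-paper argument to compare with. Your reconstruction is correct and follows the route underlying that reference: the identities $a_{*}\delta=0$, $b^{*}\delta=0$, $ba=0$, the vanishing of consecutive composites, and exactness at $\CB(B,X)$, $\CB(A,X)$ and $\BE(C,X)$ all come out of \ref{ET2}, \ref{ET3} and (ET$3^{\op}$) exactly as you describe, and the covariant sequence is the same statement read in the opposite extriangulated category. The one place worth spelling out fully is exactness at $\BE(B,X)$. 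Applying (ET$4^{\op}$) to the realisations $X\rightarrowtail Y\twoheadrightarrow B$ of $\zeta$ and $A\rightarrowtail B\twoheadrightarrow C$ of $\delta$ produces an object $E$, a conflation $X\xrightarrow{e}E\twoheadrightarrow A$ realising $a^{*}\zeta$ (clause (i)), and an extension $\delta''\in\BE(C,E)$ realised by $E\rightarrowtail Y\twoheadrightarrow C$ with $b^{*}\delta''=e_{*}\zeta$ (clause (iii)). The hypothesis $a^{*}\zeta=0$ then forces $X\xrightarrow{e}E\twoheadrightarrow A$ to split, so $e$ has a retraction $r\colon E\to X$, and $\eta\deff r_{*}\delta''\in\BE(C,X)$ satisfies $b^{*}\eta=r_{*}b^{*}\delta''=r_{*}e_{*}\zeta=\zeta$. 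Your sentence ``the compatibility clauses ET4(i)--(iii) identifying $b^{*}\eta$ with $\zeta$'' compresses exactly this splitting-and-retraction computation, which is not read off directly from the clauses and so deserves to be written out; with that unpacking the argument is complete.
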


The next two results follow from \cite[Prop.\ 1.20]{LiuYNakaoka-hearts-of-twin-cotorsion-pairs-on-extriangulated-categories} and its dual. 
See also \cite[Cor.\ 3.16]{NakaokaPalu-extriangulated-categories-hovey-twin-cotorsion-pairs-and-model-structures}. 
\begin{prop}\label{prop:variation-of-LN-prop-1-20-same-end-terms}
Let 
$\begin{tikzcd}[column sep=0.7cm]
A \arrow[tail]{r}{a}& B\arrow[two heads]{r}{b} & C\arrow[dashed]{r}{\delta} & {}
\end{tikzcd}$ 
be any $\BE$-triangle, let $f\colon A\to D$ be any morphism in $\CB$, and suppose 
$\begin{tikzcd}[column sep=0.7cm]
D \arrow[tail]{r}{d}& E\arrow[two heads]{r}{e} & C
\end{tikzcd}$ 
is a conflation realising $f_{*}\delta$. 
For any commutative diagram 
\[
\begin{tikzcd}
A \arrow[tail]{r}{a}\arrow{d}{f}& B\arrow[two heads]{r}{b}\arrow{d}{g} & C\arrow[dashed]{r}{\delta} \arrow[equals]{d}& {}\\
D \arrow[tail]{r}{d}& E\arrow[two heads]{r}{e} & C\arrow[dashed]{r}{f_{*}\delta} & {}
\end{tikzcd}
\]
there exist morphisms  $f'\colon A\to D$ and $g'\colon B\to E$, such that the conflations 
\[
\begin{tikzcd}[ampersand replacement=\&, column sep=1.5cm]
A \arrow[tail]{r}{\begin{psmallmatrix}-f \\ a\end{psmallmatrix}}\& D\oplus B\arrow[two heads]{r}{\begin{psmallmatrix}d & g'\end{psmallmatrix}} \& E
\end{tikzcd}
\hspace*{1cm}
\text{and}
\hspace{1cm}
\begin{tikzcd}[ampersand replacement=\&, column sep=1.5cm]
A \arrow[tail]{r}{\begin{psmallmatrix}-f' \\ a\end{psmallmatrix}}\& D\oplus B\arrow[two heads]{r}{\begin{psmallmatrix}d & g\end{psmallmatrix}} \& E
\end{tikzcd}
\] 
both realise $e^{*}\delta$, and $g'a=df$, $eg'=b$ and $df'=ga$.
\end{prop}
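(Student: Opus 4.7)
The plan is to deduce the statement from \cite[Prop.\ 1.20]{LiuYNakaoka-hearts-of-twin-cotorsion-pairs-on-extriangulated-categories} and its dual, applying each in turn to produce one of the two conflations.

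For the first conflation, I would apply the cited Proposition 1.20 to the $\BE$-triangle $A\to B\to C$ (with extension $\delta$) together with the morphism $f\colon A\to D$ and the given conflation $D\to E\to C$ realising $f_*\delta$. This is a pushout-type construction that produces a morphism $g'\colon B\to E$ together with the first conflation (with entries $\binom{-f}{a}$ and $(d\ g')$) realising $e^*\delta$, along with the compatibilities $g'a=df$ and $eg'=b$.

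For the second conflation, I would apply the dual statement (a pullback-type construction) to the conflation $D\to E\to C$ together with the given morphism $g\colon B\to E$. Note that the equalities $eg=b$ and $ga=df$ are already recorded in the hypothesised commutative diagram, so this input is well-posed. The output is a morphism $f'\colon A\to D$ satisfying $df'=ga$, together with the second conflation (with entries $\binom{-f'}{a}$ and $(d\ g)$) realising $e^*\delta$.

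The main technical subtlety I would anticipate is checking that both conflations realise \emph{exactly} the extension $e^*\delta \in \BE(E,A)$, as opposed to merely sequences equivalent to some realisation of it in the sense of Definition \ref{def:equivalent-sequences}. This reduces to unpacking $\fs$ applied to the morphisms of extensions $(f,1_C)\colon \delta \to f_*\delta$ and $(1_A, e)\colon e^*\delta \to f_*\delta$ via axioms \ref{ET3} and \ref{ET4}, and making the choices of realisation consistent across the two applications so that the matrix entries in the direct sum of Definition \ref{def:additive-realisation} line up exactly as stated.
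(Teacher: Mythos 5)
The first half of your proposal is fine: Proposition~1.20 of Liu--Nakaoka applied to the $\BE$-triangle realising $\delta$, the morphism $f$, and the conflation realising $f_{*}\delta$ does produce a morphism $g'\colon B\to E$ with $g'a=df$, $eg'=b$, together with the conflation $A\xrightarrow{\binom{-f}{a}}D\oplus B\xrightarrow{(d\;g')}E$ realising $e^{*}\delta$.

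The second half does not go through as stated. The (categorical) dual of \cite[Prop.\ 1.20]{LiuYNakaoka-hearts-of-twin-cotorsion-pairs-on-extriangulated-categories} takes as input an $\BE$-triangle $A\xrightarrow{a}B\xrightarrow{b}C$, a morphism $h\colon F\to C$ \emph{into the end term}, and a conflation $A\to G\to F$ realising $h^{*}\delta$; its output is a morphism $G\to B$ and a conflation of the shape $G\to F\oplus B\to C$ realising a pushed-forward extension. Your proposed input is $g\colon B\to E$, which maps into the \emph{middle} term $E$, so it is not admissible data for that dual statement, and even if one forced it, the output conflation would end at $C$ rather than at $E$ as required. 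So the dual of the cited proposition does not directly yield the second conflation $A\xrightarrow{\binom{-f'}{a}}D\oplus B\xrightarrow{(d\;g)}E$.

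What is actually needed is an extra (elementary but nontrivial) comparison step. Having obtained $g'$ from LN~1.20, note that $e(g'-g)=b-b=0$; by the exactness of $\CB(B,D)\xrightarrow{d\circ-}\CB(B,E)\xrightarrow{e\circ-}\CB(B,C)$ from Proposition~\ref{prop:exact-sequences-from-E-triangle} applied to the conflation $D\rightarrowtail E\twoheadrightarrow C$, there is $\psi\colon B\to D$ with $d\psi=g'-g$. Set $f'\deff f-\psi a$. Then the automorphism $\begin{psmallmatrix}1_{D}&\psi\\0&1_{B}\end{psmallmatrix}$ of $D\oplus B$ carries $\binom{-f'}{a}$ to $\binom{-f}{a}$ and $(d\;g')$ to $(d\;g)$ under pre/post-composition, so the two sequences are equivalent in the sense of Definition~\ref{def:equivalent-sequences} and hence realise the same extension $e^{*}\delta$; moreover $df'=df-d\psi a=df-(g'-g)a=ga$. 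This automorphism step is the content that your outline attributes, incorrectly, to the dual of LN~1.20. (The dual of LN~1.20 is instead what is used, together with the dual of this automorphism trick, to prove the companion Proposition~\ref{prop:variation-of-LN-prop-1-20-dualised-same-start-terms}.)
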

\begin{prop}\label{prop:variation-of-LN-prop-1-20-dualised-same-start-terms}
Let 
$\begin{tikzcd}[column sep=0.7cm]
A \arrow[tail]{r}{d}& D\arrow[two heads]{r}{e} & E\arrow[dashed]{r}{\delta} & {}
\end{tikzcd}$ 
be any $\BE$-triangle, let $g\colon C\to E$ be any morphism in $\CB$, and suppose 
$\begin{tikzcd}[column sep=0.7cm]
A \arrow[tail]{r}{a}& B\arrow[two heads]{r}{b} & C
\end{tikzcd}$ 
is a conflation realising $g^{*}\delta$. 
For any commutative diagram 
\[
\begin{tikzcd}
A \arrow[tail]{r}{a}\arrow[equals]{d}& B\arrow[two heads]{r}{b}\arrow{d}{f} & C\arrow[dashed]{r}{g^{*}\delta} \arrow{d}{g}& {}\\
A \arrow[tail]{r}{d}& D\arrow[two heads]{r}{e} & E\arrow[dashed]{r}{\delta} & {}
\end{tikzcd}
\]
there exist morphisms  $f'\colon B\to D$ and $g'\colon C\to E$, such that the conflations 
\[
\begin{tikzcd}[ampersand replacement=\&, column sep=1.5cm]
B \arrow[tail]{r}{\begin{psmallmatrix}f' \\ b\end{psmallmatrix}}\& D\oplus C\arrow[two heads]{r}{\begin{psmallmatrix}e & -g\end{psmallmatrix}} \& E
\end{tikzcd}
\hspace*{1cm}
\text{and}
\hspace{1cm}
\begin{tikzcd}[ampersand replacement=\&, column sep=1.5cm]
B \arrow[tail]{r}{\begin{psmallmatrix}f \\ b\end{psmallmatrix}}\& D\oplus C\arrow[two heads]{r}{\begin{psmallmatrix}e & -g'\end{psmallmatrix}} \& E
\end{tikzcd}
\] 
both realise $a_{*}\delta$, and $f'a=d$, $ef'=gb$ and $g'b=ef$.
\end{prop}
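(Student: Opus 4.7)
The plan is to deduce this proposition as the formal dual of Proposition \ref{prop:variation-of-LN-prop-1-20-same-end-terms}. The axioms (ET1)--(ET4$^{\op}$) of Definition \ref{def:extriangulated-category} are manifestly self-dual, so passing to the opposite category yields an extriangulated category $(\CB^{\op}, \BE^{\op}, \fs^{\op})$, with $\BE^{\op}(C,A) \deff \BE(A,C)$ and $\fs^{\op}$ obtained by reversing each realising sequence. Under this duality, inflations and deflations swap roles, pushforwards $f_{*}$ and pullbacks $g^{*}$ are interchanged, and a morphism of extensions $(f,h)$ in $\CB$ becomes a morphism of extensions $(h,f)$ in $\CB^{\op}$.

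Translating the hypothesis of the present proposition into $\CB^{\op}$ produces exactly the input required by Proposition \ref{prop:variation-of-LN-prop-1-20-same-end-terms}: the $\BE$-triangle $A\to D\to E$ with extension $\delta$ becomes an $\BE^{\op}$-triangle $E\to D\to A$ carrying the same underlying extension; the morphism $g\colon C\to E$ becomes $g^{\op}\colon E\to C$, playing the role of ``$f$'' in the earlier statement; the conflation $A\to B\to C$ realising $g^{*}\delta$ reverses to a conflation $C\to B\to A$ realising the corresponding $\BE^{\op}$-pushforward; and the given commutative square translates to the required commutative square in $\CB^{\op}$. Applying Proposition \ref{prop:variation-of-LN-prop-1-20-same-end-terms} in $\CB^{\op}$ then produces morphisms which, interpreted back in $\CB$, are the desired $f'\colon B\to D$ and $g'\colon C\to E$; the three identities in the conclusion of the earlier proposition become, under the substitution, precisely $f'a=d$, $ef'=gb$, and $g'b=ef$.

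The step I expect to be the main obstacle is the bookkeeping involved in transferring the direct-sum matrices through the duality: the column $\binom{-f}{a}$ and the row $(d\;\;g')$ appearing in Proposition \ref{prop:variation-of-LN-prop-1-20-same-end-terms} produce, upon taking opposites, a row $(-g\;\;e)\colon C\oplus D\to E$ and a column $\binom{b}{f'}\colon B\to C\oplus D$ in $\CB$. Composing with the symmetry isomorphism $C\oplus D\iso D\oplus C$, together with the additivity conventions of Definition \ref{def:additive-realisation}(ii), brings these into the precise forms $(e\;\;-g)$ and $\binom{f'}{b}$ appearing in the statement; the same procedure applied to the second conflation of Proposition \ref{prop:variation-of-LN-prop-1-20-same-end-terms} yields the second claimed conflation. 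Alternatively, a self-contained proof can be given by iterating axiom (ET3) and its dual applied to the morphism of $\BE$-triangles $(1_A,f,g)$ furnished by the hypothesis, producing the mapping-cone $\BE$-triangle $B\to D\oplus C\to E$ realising $a_{*}\delta$ in two equivalent ways; the duality argument above is, however, more economical.
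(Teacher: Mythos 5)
Your proposal is correct and matches what the paper intends: the text states that Propositions \ref{prop:variation-of-LN-prop-1-20-same-end-terms} and \ref{prop:variation-of-LN-prop-1-20-dualised-same-start-terms} both follow from \cite[Prop.\ 1.20]{LiuYNakaoka-hearts-of-twin-cotorsion-pairs-on-extriangulated-categories} and its dual, so Proposition \ref{prop:variation-of-LN-prop-1-20-dualised-same-start-terms} is precisely the opposite-category translation of Proposition \ref{prop:variation-of-LN-prop-1-20-same-end-terms}. Your bookkeeping of the matrices and signs under duality (including composing with the symmetry isomorphism $C\oplus D\cong D\oplus C$ and checking the three identities) is carried out correctly.
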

\subsection{Twin cotorsion pairs on extriangulated categories}
\label{sec:twin-cotorsion-pairs-on-extriangulated-categories}
In this section we recall the basics of the theory of twin cotorsion pairs on extriangulated categories  that we use throughout the rest of this article. 
For more details we refer the reader to \cite{NakaokaPalu-extriangulated-categories-hovey-twin-cotorsion-pairs-and-model-structures} and \cite{LiuYNakaoka-hearts-of-twin-cotorsion-pairs-on-extriangulated-categories}. 

We are still in the situation of Setup \ref{set:B-is-extriangulated}.
\begin{defn}\label{def:Cone-CoCone-subcategories}
\cite[Def.\ 4.2]{NakaokaPalu-extriangulated-categories-hovey-twin-cotorsion-pairs-and-model-structures}
Let $\CU, \CV$ be full subcategories of $\CB$ that are closed under isomorphisms. 
\begin{enumerate}[label=(\roman*)]
	\item By $\Cone(\CV,\CU)$ we denote the full subcategory of $\CB$ that consists of objects 		$X\in\CB$ for which there exists a conflation 
	$\begin{tikzcd}[column sep = 0.7cm]
	V \arrow[tail]{r}& U\arrow[two heads]{r} & X, 
	\end{tikzcd}$ 
	where $U\in\CU$ and $V\in\CV$. 
	\item By $\CoCone(\CV,\CU)$ we denote the full subcategory of $\CB$ that consists of objects 		$X\in\CB$ for which there exists a conflation 
	$\begin{tikzcd}[column sep = 0.7cm]
	X \arrow[tail]{r}& V\arrow[two heads]{r} & U, 
	\end{tikzcd}$ 
	where $U\in\CU$ and $V\in\CV$. 
\end{enumerate}
\end{defn}

\begin{defn}\label{def:cotorsion-pair}
\cite[Def.\ 4.1]{NakaokaPalu-extriangulated-categories-hovey-twin-cotorsion-pairs-and-model-structures}, \cite[Def.\ 2.1]{LiuYNakaoka-hearts-of-twin-cotorsion-pairs-on-extriangulated-categories} 
Let $\CU, \CV$ be full, additive subcategories of $\CB$ that are closed under taking direct summands. We call $(\CU,\CV)$ a \emph{cotorsion pair (on $\CB$)} if 
\begin{enumerate}[label=(\roman*)]
	\item $\BE(\CU,\CV) = 0$;
	\item $\CB = \Cone(\CV,\CU)$; and 
	\item $\CB = \CoCone(\CV,\CU)$. 
\end{enumerate}
\end{defn}
\begin{rem}\label{rem:U-V-extension-closed-and-proj-in-U-inj-in-V}
Let $\CU,\CV\subseteq\CB$ be full, additive subcategories of $\CB$ closed under taking direct summands. 
We denote by $\CU*\CV$ the full subcategory of $\CB$ consisting of objects $X\in\CB$
for which there is a conflation 
$\begin{tikzcd}[column sep =0.7cm]
U \arrow[tail]{r} & X \arrow[two heads]{r} & V
\end{tikzcd}$
in $\CB$ for some $U\in\CU$, $V\in\CV$; see \cite[p.\ 104]{LiuYNakaoka-hearts-of-twin-cotorsion-pairs-on-extriangulated-categories}. 
Moreover, if $(\CU,\CV)$ is a cotorsion pair, then $\CU$ is \emph{extension-closed}, i.e.\ $\CU * \CU \subseteq \CU$, and, similarly, $\CV$ is also extension-closed; see \cite[Rem.\ 4.6]{NakaokaPalu-extriangulated-categories-hovey-twin-cotorsion-pairs-and-model-structures}. 

\end{rem}
\begin{defn}\label{def:twin-cotorsion-pair}
\cite[Def.\ 4.12]{NakaokaPalu-extriangulated-categories-hovey-twin-cotorsion-pairs-and-model-structures} 
Let $(\CS,\CT)$ and $(\CU,\CV)$ be cotorsion pairs on $\CB$. 
Then $((\CS,\CT),(\CU,\CV))$ is called a \emph{twin cotorsion pair (on $\CB$)} if 
$\BE(\CS,\CV)=0$, or equivalently $\CS\subseteq\CU$, or equivalently $\CV\subseteq\CT$.
\end{defn}
For a subcategory $\CA\subseteq\CB$ that is closed under finite direct sums, we denote by $[\CA]$ the two-sided ideal of $\CB$ such that $[\CA](X,Y)$ consists of all the morphisms in $\CB(X,Y)$ that factor through an object lying in $\CA$. 
\begin{defn}\label{def:heart-and-subcategories-associated-to-twin-cotorsion-pair}
\cite[Def.\ 2.5, Def.\ 2.6]{LiuYNakaoka-hearts-of-twin-cotorsion-pairs-on-extriangulated-categories} 
Suppose $((\CS,\CT),(\CU,\CV))$ is a twin cotorsion pair on $\CB$. 
We define full subcategories of $\CB$ as follows:
\[
\begin{array}{cccc}
\CW \deff \CT \cap \CU, & 
\CB^{-}\deff \CoCone(\CW,\CS), & 
\CB^{+}\deff \Cone(\CV,\CW), & 
\CH\deff\CB^{-}\cap\CB^{+}.\end{array}
\]

For $\CA\in\{\CB^{-},\CB^{+},\CH\}$, we define 
$\ol{\CA}$ to be the additive quotient $\CA / [\CW]$. 
In particular, the subfactor category $\heart = \CH/[\CW]$ of $\CB$ is known as the \emph{heart} of $((\CS,\CT),(\CU,\CV))$. 
\end{defn}
Later we refer to the following construction (and its dual) from \cite{LiuYNakaoka-hearts-of-twin-cotorsion-pairs-on-extriangulated-categories} several times. 
\begin{defn}\label{def:nearly-cokernel-of-morphism-in-heart}
\cite[Def.\ 2.24]{LiuYNakaoka-hearts-of-twin-cotorsion-pairs-on-extriangulated-categories} 
Suppose we have a morphism $f\colon A\to B$ in $\CB$ where $A\in\CB^{-}$. 
We define $C_{f}\in\CB$ and $c_{f}\colon B\to C_{f}$ as follows. 
Since $A\in\CB^{-}$, there is an $\BE$-triangle 
$\begin{tikzcd}[column sep=0.7cm]
A \arrow[tail]{r}{}&W^{A}\arrow[two heads]{r}{}&S^{A}\arrow[dashed]{r}{\delta} &{},
\end{tikzcd}$ 
with $W^{A}\in\CW, S^{A}\in\CS$. 
Then $f\colon A\to B$ induces a morphism $(f,1_{S^{A}})\colon \delta \to f_{*}\delta$ of extensions, which we may realise by
\[
\begin{tikzcd}
A \arrow[tail]{r}\arrow{d}{f}&W^{A}\arrow[two heads]{r}\arrow{d}&S^{A}\arrow[dashed]{r}{\delta} \arrow[equals]{d}&{}\\
B \arrow[tail]{r}{c_{f}}&C_{f}\arrow[two heads]{r}&S^{A}\arrow[dashed]{r}{f_{*}\delta} &{}
\end{tikzcd}
\]
\end{defn}
Liu and Nakaoka showed that the heart of a twin cotorsion pair on an extriangulated category always carries additional structure. 
%
\begin{thm}\label{thm:heart-is-semi-abelian}
\cite[Thm.\ 2.32]{LiuYNakaoka-hearts-of-twin-cotorsion-pairs-on-extriangulated-categories} 
The heart of a twin cotorsion pair on $\CB$ is semi-abelian.
\end{thm}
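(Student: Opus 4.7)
The plan is to first establish that $\heart$ is preabelian and then verify the two dual factorisation conditions of Definition \ref{def:semi-abelian-category}. For the preabelian structure, let $\ol{f}\colon A \to B$ be a morphism in $\heart$ represented by $f\colon A\to B$ in $\CH$. Definition \ref{def:nearly-cokernel-of-morphism-in-heart} produces an object $C_f$ and an inflation $c_f\colon B \to C_f$ whose deflation-partner lies in $\CS$. By splicing the resulting $\BE$-triangle with a conflation witnessing $B \in \CB^+$, and using (ET4) together with $\CS \subseteq \CU$, one shows $C_f \in \CB^+$. A further $\BE$-triangle arising from the cotorsion pair $(\CS,\CT)$ then extracts an object $H_f \in \CH$, and the induced morphism $B \to H_f$ will be the cokernel of $\ol{f}$ in $\heart$. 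The universal property follows from Proposition \ref{prop:exact-sequences-from-E-triangle}: any morphism out of $B$ that annihilates $\ol{f}$ in $\heart$ lifts along $c_f$ and then descends uniquely through $H_f$ modulo $[\CW]$. Kernels are constructed dually via Proposition \ref{prop:variation-of-LN-prop-1-20-dualised-same-start-terms}.

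For left semi-abelianness, consider the canonical factorisation $\ol{f} = \ol{i} \circ \ol{p}$, where $\ol{p}\colon A \to I$ is the cokernel in $\heart$ of the kernel of $\ol{f}$. By construction $\ol{p}$ is a cokernel, so it suffices to show $\ol{i}\colon I \to B$ is a monomorphism. Given $\ol{h}\colon X \to I$ with $\ol{i}\ol{h} = 0$, one lifts $h$ using a conflation witnessing $I \in \CH$, exploits the vanishing $\BE(\CS,\CV) = 0$ to obtain a factorisation of $h$ through the kernel of $\ol{f}$, and concludes $\ol{h} = 0$ in $\heart$. Right semi-abelianness then follows by the dual argument applied to $(\CB^{\op},\BE^{\op},\fs^{\op})$, which is itself an extriangulated category with an induced twin cotorsion pair.

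The main obstacle is organisational rather than conceptual: each step requires selecting the correct auxiliary conflation witnessing membership in $\CB^-$ or $\CB^+$, splicing $\BE$-triangles via (ET4) and its dual, and then checking that the resulting objects stay in $\CH$ and descend correctly to $\heart = \CH/[\CW]$. The three vanishing conditions $\BE(\CS,\CT) = 0$, $\BE(\CU,\CV) = 0$ and $\BE(\CS,\CV) = 0$, together with the extension-closedness of $\CS$, $\CT$, $\CU$ and $\CV$ noted in Remark \ref{rem:U-V-extension-closed-and-proj-in-U-inj-in-V}, are precisely what guarantee the candidate kernels, cokernels and factorisations all exist and are unique modulo $[\CW]$.
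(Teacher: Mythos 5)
The paper does not prove this theorem; it is quoted with citation from Liu--Nakaoka \cite[Thm.~2.32]{LiuYNakaoka-hearts-of-twin-cotorsion-pairs-on-extriangulated-categories}, so there is no in-paper argument to compare you against. Your outline does trace that reference's overall strategy -- first preabelian via the $C_f$ and dual constructions, then semi-abelianness through the coimage factorisation, then opposite-category duality for the right half -- so the route is the right one.

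However, the steps you describe as ``organisational rather than conceptual'' are where essentially all of the content lives, and two of them are not argued in a way that would close. On cokernels: landing $C_f$ in $\CB^{+}$ is itself non-trivial, and then passing to an object of $\CH=\CB^{-}\cap\CB^{+}$ requires a conflation whose middle term lies in $\CW=\CT\cap\CU$; the cotorsion pair $(\CS,\CT)$ only supplies a $\CT$-term, so you must argue separately both that this term can be chosen in $\CU$ as well and that membership in $\CB^{+}$ is preserved -- none of which is indicated. On the monomorphism check for $\ol{i}$: ``lifting $h$ using a conflation witnessing $I\in\CH$'' is not what produces the lift of $\ol{h}\colon X\to I$ back to $A$. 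What is actually required is the specific presentation of the cokernel $\ol{p}$ as represented, up to isomorphism in $\heart$, by an inflation $A\to I$ whose deflation term lies in $\CS$ (Liu--Nakaoka's Lemma~2.28, invoked later in this paper's proof of Theorem~\ref{thm:quasi-abelian-heart}), together with the long exact sequence of Proposition~\ref{prop:exact-sequences-from-E-triangle} and the relevant cotorsion vanishing to kill the obstruction in $\CB(X,S)$ modulo $[\CW]$. A conflation exhibiting $I$ as a member of $\CB^{-}$ or $\CB^{+}$ plays no role in that lift, and ``concludes $\ol{h}=0$'' does not follow without it. Filling both of these gaps amounts to re-deriving the hardest lemmas of the cited reference.
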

%
\section{A case when \texorpdfstring{$\heart$}{Hbar} is integral}\label{sec:the-case-when-H-is-integral}
Throughout this section, let $(\CB, \BE, \fs)$ be an extriangulated category and, in addition, let $((\CS,\CT),(\CU,\CV))$ be a twin cotorsion pair on $\CB$. 
\begin{prop}\label{prop:f-epic-in-heart-iff-C-f-in-U}
\emph{\cite[Cor.\ 2.26]{LiuYNakaoka-hearts-of-twin-cotorsion-pairs-on-extriangulated-categories}} 
Let $f\in\CH(A,B)$ be a morphism. Then $\ol{f}\in\heart(A,B)$ is an epimorphism if and only if the object $C_f$ (as defined in \emph{Definition \ref{def:nearly-cokernel-of-morphism-in-heart}}) lies in $\CU$.
\end{prop}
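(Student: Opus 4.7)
I would begin by recording a general fact about the cotorsion pair $(\CU,\CV)$: an object $Z\in\CB$ lies in $\CU$ if and only if $\BE(Z,V)=0$ for every $V\in\CV$. The forward direction is the orthogonality $\BE(\CU,\CV)=0$; for the reverse, $Z\in\CB=\Cone(\CV,\CU)$ supplies a conflation $V\rightarrowtail U\twoheadrightarrow Z$ with $U\in\CU$, $V\in\CV$, whose extension lies in $\BE(Z,V)=0$, so the conflation splits and exhibits $Z$ as a summand of $U\in\CU$, hence $Z\in\CU$ by summand-closure. This reduces the claim to showing $\BE(C_f,V)=0$ for all $V\in\CV$.

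For $(\Leftarrow)$, assume $C_f\in\CU$ and take $g\in\CH(B,X)$ with $gf\in[\CW]$, say $gf=\beta\alpha$ with $\alpha\colon A\to W$, $\beta\colon W\to X$ and $W\in\CW$. From the $\BE$-triangle $A\rightarrowtail W^A\twoheadrightarrow S^A$ with extension $\delta$ (used to define $C_f$), naturality gives $(gf)_{*}\delta=\beta_{*}\alpha_{*}\delta$; but $\alpha_{*}\delta\in\BE(S^A,W)=0$ since $W\in\CT$ and $\BE(\CS,\CT)=0$. So $g_{*}(f_{*}\delta)=0$, and Proposition \ref{prop:exact-sequences-from-E-triangle} applied to $B\rightarrowtail C_f\twoheadrightarrow S^A$ yields $h\colon C_f\to X$ with $g=h c_f$. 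Using $X\in\CB^{+}$, choose a conflation $V^X\rightarrowtail W^X\twoheadrightarrow X$ with $V^X\in\CV$, $W^X\in\CW$; by the general fact $\BE(C_f,V^X)=0$, so the exact sequence $\CB(C_f,W^X)\to\CB(C_f,X)\to\BE(C_f,V^X)$ forces $h$ through $W^X\in\CW$. Hence $g=hc_f\in[\CW]$ and $\ol g=0$.

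For $(\Rightarrow)$, assume $\ol f$ is epic. Given $\zeta\in\BE(C_f,V)$, Proposition \ref{prop:exact-sequences-from-E-triangle} applied contravariantly to $B\rightarrowtail C_f\twoheadrightarrow S^A$ gives the exact sequence $\BE(S^A,V)\to\BE(C_f,V)\xrightarrow{\BE(c_f,V)}\BE(B,V)$, and $\BE(S^A,V)=0$ makes $\BE(c_f,V)$ injective, so it suffices to prove $c_f^{*}\zeta=0$. I would realise $c_f^{*}\zeta$ as a pullback conflation $V\rightarrowtail Y'\twoheadrightarrow B$ via Proposition \ref{prop:variation-of-LN-prop-1-20-dualised-same-start-terms}; since $c_f f$ factors through $W^A\in\CU$ and $\BE(W^A,V)=0$, one computes $f^{*}(c_f^{*}\zeta)=(c_f f)^{*}\zeta=0$, so $f$ lifts to $\tilde f\colon A\to Y'$.

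The main obstacle is then to convert this lift, via epi-ness of $\ol f$, into a splitting of $Y'\twoheadrightarrow B$. My plan is to exploit the $\CB^{+}$-structure $V^B\rightarrowtail W^B\twoheadrightarrow B$ of $B$ (with extension $\eta_B\in\BE(B,V^B)$): since $W^B\in\CW\subseteq\CU$ gives $\BE(W^B,V)=0$, every element of $\BE(B,V)$ has the form $\alpha_{*}\eta_B$ for some $\alpha\colon V^B\to V$, and vanishes precisely when $\alpha$ extends along the inflation $V^B\rightarrowtail W^B$. Writing $c_f^{*}\zeta=\alpha_{*}\eta_B$, I would assemble from the pushout $V\oplus_{V^B}W^B$ and the $\BE$-triangle data an auxiliary object $X\in\CH$ and a morphism $g\colon B\to X$ whose precomposition with $f$ factors through $\CW$ automatically (using $\tilde f$); epi-ness of $\ol f$ then forces $g$ itself through $\CW$, which translates into the required factorisation of $\alpha$ and hence $c_f^{*}\zeta=0$.
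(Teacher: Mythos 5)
Your preliminary reduction---that an object $Z\in\CB$ lies in $\CU$ if and only if $\BE(Z,V)=0$ for every $V\in\CV$---and the whole of the $(\Leftarrow)$ direction are correct and complete. Note at the outset that the paper does not actually prove this proposition; it is cited from [Liu--Nakaoka, Cor.\ 2.26], so there is no in-paper proof to compare against, and I am judging your argument on its own.

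The $(\Rightarrow)$ direction contains a genuine gap. The reduction to $c_f^{*}\zeta=0$ (via the injectivity of $\BE(c_f,V)$ coming from $\BE(S^A,V)=0$), the computation $f^{*}(c_f^{*}\zeta)=(c_ff)^{*}\zeta=0$ from $\BE(W^A,V)=0$, and the identity $c_f^{*}\zeta=\alpha_{*}\eta_B$ with ``$\alpha_{*}\eta_B=0$ iff $\alpha$ extends along the inflation $V^B\to W^B$'' are all correct. But the final paragraph---the one you yourself flag as ``the main obstacle''---is a plan rather than a proof. You propose to ``assemble \dots an auxiliary object $X\in\CH$ and a morphism $g\colon B\to X$ whose precomposition with $f$ factors through $\CW$ automatically (using $\tilde f$)'' and then to deduce the extension of $\alpha$ from $g\in[\CW]$, but you never say what $X$ or $g$ is, why $gf$ should factor through $\CW$, nor how $\ol g=0$ would yield the required factorisation of $\alpha$. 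The structural difficulty is that the pushout construction supplies a deflation $Y'\twoheadrightarrow B$, not a morphism out of $B$, so no natural candidate for $g$ is visible; the step from ``$\tilde f$ exists and $\ol f$ is epic'' to ``$\alpha$ extends'' is unjustified as written.

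One way to close the gap that dispenses with auxiliary objects entirely: establish (or quote from Liu--Nakaoka) that $\ol f$ epic in $\heart$ forces $\BE(f,V)\colon\BE(B,V)\to\BE(A,V)$ to be a monomorphism for every $V\in\CV$; this is the companion to the implication used in the proof of Theorem~\ref{thm:quasi-abelian-heart}. Granting that, the argument is immediate: $\BE(c_ff,V)=\BE(f,V)\circ\BE(c_f,V)=0$ because $c_ff$ factors through $W^A\in\CU$ and $\BE(W^A,V)=0$; monicity of $\BE(f,V)$ gives $\BE(c_f,V)=0$; and the injectivity of $\BE(c_f,V)$ that you already established then yields $\BE(C_f,V)=0$, so $C_f\in\CU$. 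The lift $\tilde f$ and the pushout $V\oplus_{V^B}W^B$ are then superfluous.
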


The next lemma is a unification of \cite[Lem.\ 5.3]{Nakaoka-twin-cotorsion-pairs} and \cite[Lem.\ 5.5]{LiuY-hearts-of-twin-cotorsion-pairs-on-exact-categories}.
\begin{lem}
\label{lem:LN-lem-2.31-PB-diagram-conflation-with-end-term-in-U-implies-a-is-epic}
\cite[Lem.\ 2.31]{LiuYNakaoka-hearts-of-twin-cotorsion-pairs-on-extriangulated-categories} 
Suppose 
\[
\begin{tikzcd}
A \arrow{r}{\ol{a}}\arrow{d}[swap]{\ol{b}}\PB{dr}& B\arrow{d}{\ol{c}} \\ C\arrow{r}[swap]{\ol{d}} & D 
\end{tikzcd}
\] 
is a pullback diagram in $\heart$. 
Suppose further that there is an object $X\in\CB^{-}$ and morphisms 
$x_{B}\colon X\to B$ and $x_{C}\colon X\to C$ in $\CB$, 
such that 
$\ol{c}\circ \ol{x_{B}} = \ol{d}\circ \ol{x_{C}}$ in $\ol{\CB^{-}}$ 
and that there is a conflation 
$\begin{tikzcd}[column sep=0.7cm]
X \arrow[tail]{r}{x_{B}}& B\arrow[two heads]{r}{}& U
\end{tikzcd}$ 
with $U\in\CU$. 
Then $\ol{a}\colon A\to B$ is an epimorphism in $\heart$.
\end{lem}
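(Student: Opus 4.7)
The plan is to invoke Proposition \ref{prop:f-epic-in-heart-iff-C-f-in-U}, which reduces showing $\ol{a}$ is epic in $\heart$ to verifying that the object $C_{a}$ from Definition \ref{def:nearly-cokernel-of-morphism-in-heart} lies in $\CU$. First I would construct $C_{a}$ explicitly: since $A\in\heart\subseteq\CB^{-}$, pick an $\BE$-triangle $A\rightarrowtail W^{A}\twoheadrightarrow S^{A}\overset{\delta}{\dashrightarrow}$ with $W^{A}\in\CW$ and $S^{A}\in\CS$, then push out along $a\colon A\to B$ to obtain an $\BE$-triangle $B\overset{c_{a}}{\rightarrowtail}C_{a}\twoheadrightarrow S^{A}\overset{a_{*}\delta}{\dashrightarrow}$.

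Next, I would feed the composable inflations $x_{B}\colon X\to B$ (from the hypothesised conflation $X\rightarrowtail B\twoheadrightarrow U$) and $c_{a}\colon B\to C_{a}$ into axiom \textbf{(ET4)}. This produces an object $E\in\CB$, a conflation $X\rightarrowtail C_{a}\twoheadrightarrow E$, and an $\BE$-triangle $U\rightarrowtail E\twoheadrightarrow S^{A}\dashrightarrow$ fitting into a $3\times 3$ diagram. Since the twin cotorsion pair condition gives $\CS\subseteq\CU$, we have $S^{A}\in\CU$; combined with $U\in\CU$ and the extension-closedness of $\CU$ (see Remark \ref{rem:U-V-extension-closed-and-proj-in-U-inj-in-V}), the triangle $U\rightarrowtail E\twoheadrightarrow S^{A}$ places $E$ in $\CU$.

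It remains to upgrade the conflation $X\rightarrowtail C_{a}\twoheadrightarrow E$ (with $E\in\CU$) to the conclusion $C_{a}\in\CU$, and here the pullback hypothesis and the compatibility $\ol{c}\circ\ol{x_{B}}=\ol{d}\circ\ol{x_{C}}$ in $\ol{\CB^{-}}$ must finally intervene. The latter supplies a factorisation of $c\,x_{B}-d\,x_{C}$ through some object of $\CW$, which, in tandem with the pullback universal property of $(A,\ol{a},\ol{b})$ in $\heart$, should produce a morphism forcing the extension class of $X\rightarrowtail C_{a}\twoheadrightarrow E$ to vanish after testing against objects of $\CV$ via the long exact sequence from Proposition \ref{prop:exact-sequences-from-E-triangle}; $C_{a}$ would then lie in $\CU$ by a direct-summand argument, using that $\CU$ is closed under summands. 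The main obstacle is precisely this closing step: the pullback property nominally applies only to objects of $\heart$, whereas $X\in\CB^{-}$ need not lie in $\heart$, so one cannot simply pull back $(\ol{x_{B}},\ol{x_{C}})$ to a morphism $X\to A$ in $\heart$. I would expect to need a careful $\CB$-level lift of $(x_{B},x_{C})$, possibly rerouting through the $\BE$-triangle realising $A\in\CB^{-}$ and invoking Proposition \ref{prop:variation-of-LN-prop-1-20-same-end-terms} or its dual Proposition \ref{prop:variation-of-LN-prop-1-20-dualised-same-start-terms} to transport the $[\CW]$-compatibility into concrete morphisms in $\CB$, after which the diagram chase closes up.
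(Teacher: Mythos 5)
Note first that the paper does not prove this lemma: it is cited verbatim from Liu--Nakaoka \cite[Lem.\ 2.31]{LiuYNakaoka-hearts-of-twin-cotorsion-pairs-on-extriangulated-categories}, so there is no in-paper argument to compare against and the proposal can only be judged on its own merits.

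Your opening reduction is right: by Proposition \ref{prop:f-epic-in-heart-iff-C-f-in-U}, showing $\ol{a}$ is epic amounts to showing $C_a\in\CU$, and $C_a$ is correctly constructed as in Definition \ref{def:nearly-cokernel-of-morphism-in-heart}. The \ref{ET4} step is also valid: applying it to the composable inflations $x_B$ and $c_a$ produces conflations $X\rightarrowtail C_a\twoheadrightarrow E$ and $U\rightarrowtail E\twoheadrightarrow S^A$, and since $\CU$ is extension-closed with $U\in\CU$ and $S^A\in\CS\subseteq\CU$, one does get $E\in\CU$.

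The closing step, however, is a genuine gap, and the mechanisms you sketch do not close it. Since $(\CU,\CV)$ is a cotorsion pair, $C_a\in\CU$ if and only if $\BE(C_a,V)=0$ for all $V\in\CV$. The conflation $X\rightarrowtail C_a\twoheadrightarrow E$ together with $E\in\CU$ gives, via Proposition \ref{prop:exact-sequences-from-E-triangle}, only an injection $\BE(C_a,V)\hookrightarrow\BE(X,V)$; since $X\in\CB^-$ need not lie in $\CU$, the target group need not vanish and the injection alone proves nothing. The proposed ``direct-summand argument'' fares no better: even if $X\rightarrowtail C_a\twoheadrightarrow E$ split, you would get $C_a\iso X\oplus E$, and with $X\notin\CU$ this does not place $C_a$ in $\CU$. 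So neither of the two escape routes you name actually leads anywhere.

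More fundamentally, the sketch never makes concrete use of the two essential hypotheses: the pullback universal property of $(A,\ol{a},\ol{b})$ in $\heart$ and the congruence $\ol{c}\,\ol{x_B}=\ol{d}\,\ol{x_C}$ in $\ol{\CB^-}$. You correctly identify the obstacle---$X$ lives only in $\CB^-$, so the pullback (a diagram in $\heart$) does not directly furnish a morphism $X\to A$---but the proposed fix (``a careful $\CB$-level lift'' routed through Propositions \ref{prop:variation-of-LN-prop-1-20-same-end-terms} or \ref{prop:variation-of-LN-prop-1-20-dualised-same-start-terms}, followed by ``the diagram chase closes up'') is too vague to count as a proof. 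Any correct argument has to transport the test data at $X$ into $\heart$---for example by replacing $X$ with a suitable object of $\CH$ to which $x_B$ and $x_C$ descend compatibly---and only then invoke the pullback to produce a morphism into $A$ that kills the relevant extension group. That is precisely the hard content of Liu--Nakaoka's Lemma 2.31, and the proposal leaves it unresolved.
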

The following definition is a direct generalisation of the notions from the exact setting.
\begin{defn}\label{def:projectives-ProjB-has-enough-projectives-duals}
\cite[Def.\ 3.23, Def.\ 3.25]{NakaokaPalu-extriangulated-categories-hovey-twin-cotorsion-pairs-and-model-structures} 
An object $P\in\CB$ is said to be \emph{projective} if, for any conflation 
$\begin{tikzcd}[column sep=0.7cm]
A \arrow[tail]{r}{}& B\arrow[two heads]{r}{b} & C 
\end{tikzcd}$ 
and for any morphism 
$c\colon P\to C$, there exists a morphism $d\colon P \to B$ such that $bd=c$. 
The full subcategory of $\CB$ consisting of all projective objects is denoted by $\Proj\CB$. 
We say that $\CB$ \emph{has enough projectives} if, for each object $C\in\CB$, there exists a conflation 
$\begin{tikzcd}[column sep=0.7cm]
A \arrow[tail]{r}{}& P\arrow[two heads]{r}{p} & C
\end{tikzcd}$ 
with $P$ projective. 

\emph{Injective} objects, the full subcategory $\Inj\CB$ and $\CB$ \emph{has enough injectives} are  all defined dually.
\end{defn}
\begin{rem}\label{rem:projective-injective-definition-coincide-with-defs-for-exact-categories-triangulated-category-has-enough-of-them}
\begin{enumerate}[label=(\roman*)]
    \item As noted in \cite[Exam.\ 3.26]{NakaokaPalu-extriangulated-categories-hovey-twin-cotorsion-pairs-and-model-structures}, if $(\CB,\BE,\fs)$ is an exact category, then the notions in Definition \ref{def:projectives-ProjB-has-enough-projectives-duals} all coincide with the usual ones. 
    If instead $(\CB,\BE,\fs)$ is a triangulated category, then $\Proj\CB = \{0\} = \Inj\CB$, and $\CB$ has enough projectives and enough injectives.
    \item For any cotorsion pair $(\CX,\CY)$ on $\CB$, we have $\Proj \CB\subseteq \CX$ and $\Inj \CB \subseteq \CY$; see \cite[Rem.\ 2.2]{LiuYNakaoka-hearts-of-twin-cotorsion-pairs-on-extriangulated-categories}. 
\end{enumerate}

\end{rem}
The following is the main result of this section, and unifies \cite[Thm.\ 6.3]{Nakaoka-twin-cotorsion-pairs} and \cite[Thm.\ 6.2]{LiuY-hearts-of-twin-cotorsion-pairs-on-exact-categories}. 
It also improves the latter because here $\CB$ is not assumed to be Krull-Schmidt. 
\begin{thm}\label{thm:integral-heart}
Let $(\CB,\BE,\fs)$ be an extriangulated category with enough projectives and injectives. 
Suppose $((\CS,\CT),(\CU,\CV))$ is a twin cotorsion pair on $\CB$, such that $\CU\subseteq\CS * \CT$ and $\Proj\CB\subseteq\CW$ (respectively, $\CT\subseteq\CU * \CV$ and $\Inj\CB\subseteq\CW$). Then $\heart=\CH/[\CW]$ is left integral (respectively, right integral), and hence integral.
\end{thm}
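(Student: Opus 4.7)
By Theorem \ref{thm:heart-is-semi-abelian} the heart $\heart$ is semi-abelian, and Proposition \ref{prop:semi-abelian-category-is-left-integral-iff-right-integral} then identifies left with right integrality for $\heart$. It therefore suffices to prove the left integral statement under the hypotheses $\CU \subseteq \CS * \CT$ and $\Proj \CB \subseteq \CW$; the right integral statement follows by the formally dual argument under $\CT \subseteq \CU * \CV$ and $\Inj \CB \subseteq \CW$, swapping the roles of (ET4) and (ET4$^{\op}$), and of Propositions \ref{prop:variation-of-LN-prop-1-20-same-end-terms} and \ref{prop:variation-of-LN-prop-1-20-dualised-same-start-terms}.

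To prove left integrality, I would fix a pullback diagram in $\heart$
\[
\begin{tikzcd}
A \arrow{r}{\ol{a}}\arrow{d}[swap]{\ol{b}}\PB{dr}& B\arrow{d}{\ol{c}} \\ C\arrow{r}[swap]{\ol{d}} & D
\end{tikzcd}
\]
in which $\ol{d}$ is an epimorphism, and fix lifts $a,b,c,d$ of these morphisms to $\CB$. By Proposition \ref{prop:f-epic-in-heart-iff-C-f-in-U}, the object $C_d$ of Definition \ref{def:nearly-cokernel-of-morphism-in-heart} lies in $\CU$; the hypothesis $\CU \subseteq \CS * \CT$ then gives a conflation $S' \rightarrowtail C_d \twoheadrightarrow T'$ with $S' \in \CS$ and $T' \in \CT$. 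The plan is to verify the hypotheses of Lemma \ref{lem:LN-lem-2.31-PB-diagram-conflation-with-end-term-in-U-implies-a-is-epic} for $\ol{a}$, i.e.\ to produce $X \in \CB^{-}$, morphisms $x_B \colon X \to B$ and $x_C \colon X \to C$ satisfying $\ol{c} \circ \ol{x_B} = \ol{d} \circ \ol{x_C}$ in $\ol{\CB^{-}}$, together with a conflation $X \rightarrowtail B \twoheadrightarrow U$ with $U \in \CU$.

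The object $X$ would be built in two steps. First, combine the $\BE$-triangle $D \rightarrowtail C_d \twoheadrightarrow S^C$ from Definition \ref{def:nearly-cokernel-of-morphism-in-heart} with the conflation $S' \rightarrowtail C_d \twoheadrightarrow T'$ via iterated use of (ET4) and (ET4$^{\op}$) to obtain a nine-diagram from which one extracts an $\BE$-triangle $Z \rightarrowtail D \twoheadrightarrow T' \dashrightarrow$ together with an $\BE$-triangle $S' \rightarrowtail Z \twoheadrightarrow S^C \dashrightarrow$. Since $\CS \subseteq \CU$ and $\CU$ is extension-closed (Remark \ref{rem:U-V-extension-closed-and-proj-in-U-inj-in-V}), this forces $Z \in \CU$. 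Second, pick a projective cover $P \twoheadrightarrow T'$, possible because $\CB$ has enough projectives; since $\Proj \CB \subseteq \CW \subseteq \CU$, we have $P \in \CU$. Combining Propositions \ref{prop:variation-of-LN-prop-1-20-same-end-terms} and \ref{prop:variation-of-LN-prop-1-20-dualised-same-start-terms} with (ET4$^{\op}$), applied to the composite $B \to D \to T'$ augmented by $P$, then yields a conflation $X \rightarrowtail B \twoheadrightarrow U$ in which $U$ is an extension of $P$ by $Z$, so that $U \in \CU$ by extension-closedness. The map $x_C \colon X \to C$ is obtained from the pullback commutation $\ol{c a} = \ol{d b}$ in $\heart$ by a diagram chase, where the containment $\Proj \CB \subseteq \CW$ allows any residual correction to be absorbed into $[\CW]$. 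Lemma \ref{lem:LN-lem-2.31-PB-diagram-conflation-with-end-term-in-U-implies-a-is-epic} then yields that $\ol{a}$ is an epimorphism in $\heart$.

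The main obstacle is controlling the cokernel $U$ of $X \rightarrowtail B$: ensuring that it lies in $\CU$ throughout the pullback-like construction along $c$ requires careful tracking of the extension classes produced by (ET4) and (ET4$^{\op}$), and is precisely where the hypothesis $\CU \subseteq \CS * \CT$ enters essentially. The hypothesis $\Proj \CB \subseteq \CW$ is the technical ingredient that makes the projective correction invisible in $\heart$, allowing the diagram-chase verification of $\ol{c} \circ \ol{x_B} = \ol{d} \circ \ol{x_C}$ in $\ol{\CB^{-}}$ to go through.
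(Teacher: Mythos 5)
You correctly identify the overall strategy — reduce to one side via semi-abelianness, aim at Lemma \ref{lem:LN-lem-2.31-PB-diagram-conflation-with-end-term-in-U-implies-a-is-epic}, start from $C_{d}\in\CU$ and the conflation $S'\rightarrowtail C_{d}\twoheadrightarrow T'$ provided by $\CU\subseteq\CS*\CT$. However, the central step of your construction is wrong. The ``nine-diagram'' you invoke, producing simultaneously $Z\rightarrowtail D\twoheadrightarrow T'$ and $S'\rightarrowtail Z\twoheadrightarrow S^{C}$ from $D\rightarrowtail C_{d}\twoheadrightarrow S^{C}$ and $S'\rightarrowtail C_{d}\twoheadrightarrow T'$, does not exist in general: already in an abelian category of finite length, your two conflations would force $\ell(D)=\ell(Z)+\ell(T')=\ell(S')+\ell(S^{C})+\ell(T')=\ell(C_{d})+\ell(S^{C})$, contradicting $\ell(D)=\ell(C_{d})-\ell(S^{C})$ unless $S^{C}=0$. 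What a pullback/(ET$4^{\op}$) argument actually gives is an object $Z$ with $Z\rightarrowtail D$ and $Z\rightarrowtail S'$ (the arrows pointing the other way), and with that correction your conclusion $Z\in\CU$ fails: $\CU$ is extension-closed, not closed under cocones of morphisms between $\CS$-objects. Since $U\in\CU$ in your plan depends entirely on $Z\in\CU$, you never establish the hypothesis of Lemma \ref{lem:LN-lem-2.31-PB-diagram-conflation-with-end-term-in-U-implies-a-is-epic}. The subsequent sentence building $X\rightarrowtail B\twoheadrightarrow U$ ``from the composite $B\to D\to T'$ augmented by $P$'' is also not well-defined — that composite is an inflation followed by a deflation, which is neither, so (ET$4^{\op}$) does not apply to it.

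The paper sidesteps all of this by never trying to produce the cokernel $U$ from the $C_{d}$-side data directly. It works from the $\CB^{-}$-structure of $B$: starting from $B\rightarrowtail W^{B}\twoheadrightarrow S^{B}$, it takes projective covers $P_{S^{B}}\twoheadrightarrow S^{B}$ and $P_{Q}\twoheadrightarrow Q$ (where $Q\in\CS$ is an extension of $S^{B}$ by the $\CS$-piece $S$ of $C_{d}$) and assembles, via (ET4), a conflation $K_{Q}\rightarrowtail B\oplus P_{S^{B}}\oplus P_{Q}\twoheadrightarrow M$ in which $M$ is an \emph{extension} of $W^{B}\in\CW$ by $S\oplus P_{S^{B}}\in\CU$, hence lies in $\CU$ by extension-closure alone. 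The decomposition $S\rightarrowtail C_{d}\twoheadrightarrow T$ only enters to manufacture the connecting morphism $n\colon K_{S^{B}}\to S$, using $\BE(S^{B},T)=0$, projectivity of $P_{S^{B}}$, and the exact sequences of Proposition \ref{prop:exact-sequences-from-E-triangle}; the hypothesis $\Proj\CB\subseteq\CW$ is what makes the added projective summands vanish in $\heart$ when checking $\ol{c}\circ\ol{x_{B}}=\ol{d}\circ\ol{x_{C}}$. I would encourage you to rework your argument so that the $\CU$-membership of the cokernel comes from extension-closure rather than from closure under (co)kernels, which $\CU$ does not enjoy.
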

\begin{proof}
Note that as $\heart$ is a semi-abelian category by Theorem \ref{thm:heart-is-semi-abelian}, we have that $\heart$ is left integral if and only if $\heart$ is right integral by Proposition \ref{prop:semi-abelian-category-is-left-integral-iff-right-integral}. 
Therefore, integrality of $\heart$ will follow from integrality on one side. 
We will show that $\CU\subseteq\CS * \CT$ and $\Proj\CB\subseteq\CW$ imply $\heart$ is left integral. 
Showing that $\heart$ is right integral if $\CT\subseteq\CU * \CV$ and $\Inj\CB\subseteq\CW$ is similar.

Suppose that $\CU\subseteq\CS * \CT$ and $\Proj\CB\subseteq\CW$, and that we have a pullback diagram 
\[
\begin{tikzcd}
A \arrow{r}{\ol{a}}\arrow{d}[swap]{\ol{b}}\PB{dr}& B\arrow{d}{\ol{c}} \\
C \arrow{r}[swap]{\ol{d}}& D
\end{tikzcd}
\] 
in $\heart$, where $\ol{d}$ is an epimorphism. 
As $C\in\heart\subseteq\CB^{-}$, we obtain a morphism 
\begin{equation}\label{eqn:obtaining-c-d}
    \begin{tikzcd}
    C\arrow[tail]{r}{} \arrow{d}{d}&W^{C} \arrow[two heads]{r}{} \arrow{d}&S^{C} \arrow[dashed]{r}{}\arrow[equals]{d}& {}\\
    D \arrow[tail]{r}{c_{d}} & C_{d} \arrow[two heads]{r}{e} & S^{C} \arrow[dashed]{r}{\delta_{c_{d}}}& {}
    \end{tikzcd}
\end{equation}
of $\BE$-triangles, as in Definition \ref{def:nearly-cokernel-of-morphism-in-heart}, 
where $W^{C}\in\CW, S^{C}\in\CS$. 
By Proposition \ref{prop:f-epic-in-heart-iff-C-f-in-U}, 
we have that $C_{d}$ belongs to $\CU$ as $\ol{d}$ is an epimorphism.
By assumption, we have $\CU\subseteq\CS*\CT$, so there is a conflation 
$\begin{tikzcd}[column sep=0.7cm]
S\arrow[tail]{r}{f} &C_{d} \arrow[two heads]{r}{g} &T, 
\end{tikzcd}$ 
where $S\in\CS$ and $T\in\CT$. 

There is a conflation 
$\begin{tikzcd}[column sep=0.7cm]
B\arrow[tail]{r}{}  & W^{B} \arrow[two heads]{r}{h} &S^{B}, 
\end{tikzcd}$ where $W^{B}\in\CW, S^{B}\in\CS$, because  $B\in\heart\subseteq\CB^{-}$; and there is also a conflation 
$\begin{tikzcd}[column sep=0.7cm]
K_{S^{B}}\arrow[tail]{r}{i} & P_{S^{B}}\arrow[two heads]{r}{j} &S^{B}, 
\end{tikzcd}$ 
with $P_{S^{B}}\in\Proj\CB\subseteq\CW$, as $\CB$ has enough projectives. 
Since $P_{S^{B}}$ is projective we see that $j$ factors through $h$, and we obtain a morphism 
\begin{equation}\label{eqn:k-star-of-delta-i}
    \begin{tikzcd}
    K_{S^{B}}\arrow[tail]{r}{i}\arrow[dotted]{d}{k} &P_{S^{B}} \arrow[two heads]{r}{j}\arrow[dotted]{d}{} & S^{B}\arrow[equals]{d}\arrow[dashed]{r}{\delta_{i}}& {}\\
    B\arrow[tail]{r}{} &W^{B} \arrow[two heads]{r}{h} & S^{B}\arrow[dashed]{r}{}& {}
    \end{tikzcd}
\end{equation}
of $\BE$-triangles by an application of (ET$3^{\op}$). 
Since
$\begin{tikzcd}[column sep=0.7cm]
K_{S^{B}}\arrow[tail]{r}{i} & P_{S^{B}}\arrow[two heads]{r}{j} &S^{B}, 
\end{tikzcd}$ 
is a conflation, by Proposition \ref{prop:exact-sequences-from-E-triangle} we have an exact sequence
\[
\begin{tikzcd}[column sep=0.8cm]
\CB(S^{B},T)\arrow{r}{}&\CB(P_{S^{B}},T)\arrow{r}{-\circ i}&\CB(K_{S^{B}},T)\arrow{r}{}&\BE(S^{B},T)=0,
\end{tikzcd}
\] 
where the last term vanishes because $(\CS,\CT)$ is a cotorsion pair.
Thus, there exists $l\colon P_{S^{B}}\to T$ such that $li=gc_{d}ck \colon K_{S^{B}} \to T$. 
Then $l$ factors through $g$ as $P_{S^{B}}$ is projective, so there exists $m\colon P_{S^{B}}\to C_{d}$ such that $gm=l$. 
Note that this implies $g(c_{d}ck-mi)=gc_{d}ck-gmi=li-li=0$, and hence $c_{d}ck-mi\colon K_{S^{B}}\to C_{d}$ must factor through $f\colon S\to C_{d}$ by Proposition \ref{prop:exact-sequences-from-E-triangle}. 
That is, there exists $n\colon K_{S^{B}}\to S$ such that $fn=c_{d}ck-mi$. 

Let 
$\begin{tikzcd}[column sep=0.7cm]
S\arrow[tail]{r}{} &Q \arrow[two heads]{r}{} & S^{B}
\end{tikzcd}$ 
be a realisation of $n_{*}\delta_{i}$. 
Notice that this implies $Q\in\CS$ as $\CS$ is extension-closed (see 
Remark \ref{rem:U-V-extension-closed-and-proj-in-U-inj-in-V}). 
By Proposition \ref{prop:variation-of-LN-prop-1-20-same-end-terms}, there is a morphism $q\colon P_{S^{B}}\to Q$ such that 
\[
\begin{tikzcd}
K_{S^{B}}\arrow[tail]{r}{i}\arrow{d}{n} & P_{S^{B}}\arrow[two heads]{r}{j} \arrow{d}{q}& S^{B} \arrow[dashed]{r}{\delta_{i}}\arrow[equals]{d}{}& {}\\
S\arrow[tail]{r}{p} & Q\arrow[two heads]{r}{} &S^{B} \arrow[dashed]{r}{n_{*}\delta_{i}}& {}
\end{tikzcd}
\] 
commutes and  
$\begin{tikzcd}[ampersand replacement=\&, column sep=1.2cm]
K_{S^{B}}\arrow[tail]{r}{\begin{psmallmatrix}-n \\ i\end{psmallmatrix}} \& S\oplus P_{S^{B}} \arrow[two heads]{r}{\begin{psmallmatrix}p & q\end{psmallmatrix}} \& Q
\end{tikzcd}$ 
is a conflation.

As $\CB$ has enough projectives, there is an $\BE$-triangle 
$\begin{tikzcd}[column sep=0.7cm]
K_{Q}\arrow[tail]{r}{r} & P_{Q}\arrow[two heads]{r}{s} &Q \arrow[dashed]{r}{\delta_{r}}& {}
\end{tikzcd}$ 
with $P_{Q}$ projective. 
Furthermore, as $P_{Q}\in\Proj\CB\subseteq\CW$ and $Q\in\CS$, we have $K_{Q}$ lies in 
$\CoCone(\CW,\CS)=\CB^{-}$. 
We have that $s$ factorises through 
$(\,p\;\,q\,)$ 
since $P_{Q}$ is projective, and hence we have a morphism 
\begin{equation}\label{eqn:t-star-delta-r}
    \begin{tikzcd}[ampersand replacement=\&]
    K_{Q}\arrow[tail]{r}{r} \arrow[dotted]{d}{t}\& P_{Q}\arrow[two heads]{r}{s} \arrow[dotted]{d}{\begin{psmallmatrix}u\\ v\end{psmallmatrix}}\& Q \arrow[dashed]{r}{\delta_{r}}\arrow[equals]{d}\& {}\\
    K_{S^{B}}\arrow[tail]{r}{\begin{psmallmatrix}-n \\ i\end{psmallmatrix}} \& S\oplus P_{S^{B}} \arrow[two heads]{r}{\begin{psmallmatrix}p & q\end{psmallmatrix}} \& Q\arrow[dashed]{r}{t_{*}\delta_{r}}\& {}\end{tikzcd}
\end{equation} 
of $\BE$-triangles by (ET$3^{\op}$). 
Note that $fn=c_{d}ck-mi$ implies 
\[
c_{d}(ck) = fn+mi = 
(\,-f\;\,m\,) \circ \begin{psmallmatrix}-n \\ i \end{psmallmatrix}. 
\] 
Thus, by applying \ref{ET3}, we have a morphism of $\BE$-triangles as follows: 
\begin{equation}\label{eqn:t-star-delta-r-to-delta-c-d}
    \begin{tikzcd}[ampersand replacement=\&]
    K_{S^{B}}\arrow[tail]{r}{\begin{psmallmatrix}-n \\ i\end{psmallmatrix}} \arrow{d}{ck}\& S\oplus P_{S^{B}} \arrow[two heads]{r}{\begin{psmallmatrix}p & q\end{psmallmatrix}} \arrow{d}{\begin{psmallmatrix}-f & m\end{psmallmatrix}}\& Q\arrow[dashed]{r}{t_{*}\delta_{r}}\arrow[dotted]{d}\& {}\\
    D\arrow[tail]{r}{c_{d}} \& C_{d}\arrow[two heads]{r}{e} \&S^{C} \arrow[dashed]{r}{\delta_{c_{d}}}\& {}
    \end{tikzcd}
\end{equation} 
Composing \eqref{eqn:t-star-delta-r} and \eqref{eqn:t-star-delta-r-to-delta-c-d}, 
we obtain the morphism 
\begin{equation}\label{eqn:delta-r-to-delta-c-d}
    \begin{tikzcd}[ampersand replacement=\&, column sep=1.5cm]
    K_{Q}\arrow[tail]{r}{r} \arrow{d}{ckt}\& P_{Q}\arrow[two heads]{r}{s} \arrow{d}{mv-fu}\&Q \arrow[dashed]{r}{\delta_{r}}\arrow{d}\& {}\\
    D\arrow[tail]{r}{c_{d}} \& C_{d}\arrow[two heads]{r}{e} \&S^{C} \arrow[dashed]{r}{\delta_{c_{d}}}\& {}
    \end{tikzcd}
\end{equation} 

There is an $\BE$-triangle 
$\begin{tikzcd}[column sep=0.7cm]
K_{S^{C}}\arrow[tail]{r}{x} &P_{S^{C}} \arrow[two heads]{r}{y} &S^{C} \arrow[dashed]{r}{\delta_{x}}& {}
\end{tikzcd}$ 
with $P_{S^{C}}\in\Proj\CB$, as $\CB$ has enough projectives. 
Using that $P_{S^{C}}$ is projective and (ET$3^{\op}$), there is a morphism 
\begin{equation}\label{eqn:z-star-delta-x}
    \begin{tikzcd}
    K_{S^{C}}\arrow[tail]{r}{x} \arrow[dotted]{d}{z}&P_{S^{C}} \arrow[two heads]{r}{y} \arrow[dotted]{d}{}&S^{C} \arrow[dashed]{r}{\delta_{x}}\arrow[equals]{d}& {}\\
    C\arrow[tail]{r}{} &W^{C} \arrow[two heads]{r}{} &S^{C} \arrow[dashed]{r}{}& {}\\
    \end{tikzcd}
\end{equation} 
of $\BE$-triangles. Then composing morphisms \eqref{eqn:z-star-delta-x} and \eqref{eqn:obtaining-c-d}, we obtain a morphism of $\BE$-triangles as follows: 
\begin{equation}\label{eqn:delta-x-to-delta-c-d}
    \begin{tikzcd}
    K_{S^{C}}\arrow[tail]{r}{x} \arrow{d}{dz}&P_{S^{C}} \arrow[two heads]{r}{y} \arrow{d}{a'}&S^{C} \arrow[dashed]{r}{\delta_{x}}\arrow[equals]{d}& {}\\
    D\arrow[tail]{r}{c_{d}} & C_{d}\arrow[two heads]{r}{e} &S^{C} \arrow[dashed]{r}{\delta_{c_{d}}}& {}
    \end{tikzcd}
\end{equation} 

Consider the morphism $e(mv-fu)\colon P_{Q}\to S^{C}$. 
As 
$\begin{tikzcd}[column sep=0.7cm]
K_{S^{C}}\arrow[tail]{r}{x} &P_{S^{C}} \arrow[two heads]{r}{y} &S^{C} \arrow[dashed]{r}{\delta_{x}}& {}
\end{tikzcd}$ 
is an $\BE$-triangle and $P_{Q}$ is projective, there exists $b'\colon P_{Q}\to P_{S^{C}}$ such that $yb'=e(mv-fu)$. 
This yields $y(b'r)=e(mv-fu)r=ec_{d}ckt=0$ (using the commutativity of \eqref{eqn:delta-r-to-delta-c-d}), and so by Proposition \ref{prop:exact-sequences-from-E-triangle} there exists $c'\colon K_{Q}\to K_{S^{C}}$ such that $xc'=b'r$. 
In addition, we also see that 
$e(a'b'-(mv-fu))=ea'b'-e(mv-fu)=yb'-yb'=0$. 
Thus, there exists $d'\colon P_{Q}\to D$ such that $c_{d}d'=a'b'-(mv-fu)$, by Proposition \ref{prop:exact-sequences-from-E-triangle}, using the conflation 
$\begin{tikzcd}[column sep=0.7cm]
D\arrow[tail]{r}{c_{d}} & C_{d}\arrow[two heads]{r}{e} &S^{C}.
\end{tikzcd}$ 
Hence, we have
\begin{equation}\label{eqn:equality-involving-c-d}
\begin{array}{rcl}
     c_{d}dzc'  & = & a'xc' \\
                & = & a'b'r \\
                & = & c_{d}d'r+(mv-fu)r \\
                & = & c_{d}d'r + c_{d}ckt \\
                & = & c_{d}(d'r + ckt). \\
\end{array}
\end{equation}

As \eqref{eqn:delta-x-to-delta-c-d} is a morphism of $\BE$-triangles, we have 
$\delta_{c_{d}}=(1_{S^{C}})^{*}\delta_{c_{d}}=(dz)_{*}\delta_{x},$ and so 
$(dz)_{*}\delta_{x}$ is realised by the conflation 
$\begin{tikzcd}[column sep=0.7cm]
D\arrow[tail]{r}{c_{d}} & C_{d}\arrow[two heads]{r}{e} &S^{C}.
\end{tikzcd}$ 
Therefore, by Proposition \ref{prop:variation-of-LN-prop-1-20-same-end-terms}, there is a conflation 
\begin{equation}\label{eqn:extension-C-d-by-K-S-C}
\begin{tikzcd}[ampersand replacement=\&, column sep=1.5cm]
K_{S^{C}}\arrow[tail]{r}{\begin{psmallmatrix} -dz \\ x\end{psmallmatrix}} \& D\oplus P_{S^{C}}\arrow[two heads]{r}{\begin{psmallmatrix} c_{d}&e'\end{psmallmatrix}} \& C_{d}
\end{tikzcd}
\end{equation}
for some $e'\colon P_{S^{C}}\to C_{d}$ satisfying $e'x=c_{d}dz$. 
Consider the morphism 
$\begin{psmallmatrix}
-(d'r + ckt)\\
b'r
\end{psmallmatrix}
\colon K_{Q}\to D\oplus P_{S^{C}}$, and note that 
\[
\begin{array}{rclr}
			(\,c_{d}\;\,e'\,) \circ \begin{psmallmatrix}-(d'r + ckt)\\b'r\end{psmallmatrix}  
                & = & e'b'r-c_{d}(d'r + ckt) &\\
                & = & e'xc'-c_{d}(d'r + ckt) &\\
                & = & c_{d}dzc'-c_{d}(d'r + ckt) &\\
                & = & 0 & \text{by }\eqref{eqn:equality-involving-c-d}.\\
\end{array}
\] 
Thus, there exists $f'\colon K_{Q}\to K_{S^{C}}$ such that 
$\begin{psmallmatrix}-dzf'\\xf'\end{psmallmatrix} 
=\begin{psmallmatrix}-dz\\x\end{psmallmatrix}f'
=\begin{psmallmatrix}-(d'r + ckt)\\b'r\end{psmallmatrix}.$ 
In particular, we see that 
\begin{equation}\label{eqn:d-x-C-equals-c-x-B-mod-W}
    d(zf')=d'r+c(kt).
\end{equation}

From \eqref{eqn:k-star-of-delta-i} we get a conflation
\begin{equation}\label{eqn:conflation-starting-K-S-B}
    \begin{tikzcd}[ampersand replacement=\&, column sep=1.2cm]
    K_{S^{B}}\arrow[tail]{r}{\begin{psmallmatrix}-k \\ i\end{psmallmatrix}} \& B\oplus P_{S^{B}}\arrow[two heads]{r}{} \& W^{B}
    \end{tikzcd}
\end{equation} 
and from \eqref{eqn:t-star-delta-r} we get a conflation
\begin{equation}\label{eqn:conflation-starting-K-Q}
    \begin{tikzcd}[ampersand replacement=\&, column sep=1.2cm]
    K_{Q}\arrow[tail]{r}{\begin{psmallmatrix}-t\\r\end{psmallmatrix}} \&K_{S^{B}}\oplus P_{Q} \arrow[two heads]{r}{} \& S\oplus P_{S^{B}},
    \end{tikzcd}
\end{equation} 
using Proposition \ref{prop:variation-of-LN-prop-1-20-same-end-terms}. 
As \eqref{eqn:conflation-starting-K-S-B} is a conflation, we also have a conflation
\begin{equation}\label{eqn:conflation-starting-K-S-B-oplus-P-Q}
    \begin{tikzcd}[ampersand replacement=\&, column sep=1.7cm, row sep=2cm]
    K_{S^{B}}\oplus P_{Q}\arrow[tail]{r}{
    \begin{psmallmatrix}
    -k & 0 \\ 
    i & 0 \\
    0 & 1_{P_{Q}}
    \end{psmallmatrix}
    } \& B\oplus P_{S^{B}}\oplus P_{Q}\arrow[two heads]{r}{} \& W_{1}
    \end{tikzcd}
\end{equation} 
using \ref{ET2}. Then, applying \ref{ET4} to the conflations \eqref{eqn:conflation-starting-K-Q} and \eqref{eqn:conflation-starting-K-S-B-oplus-P-Q}, we have a commutative diagram 
\[
\begin{tikzcd}[ampersand replacement=\&,column sep=1cm, row sep=1.3cm]
    K_{Q}\arrow[tail]{r}{\begin{psmallmatrix}-t\\r\end{psmallmatrix}}\arrow[equals]{d} \& K_{S^{B}}\oplus P_{Q} \arrow[two heads]{r}{}\arrow[tail]{d}{\begin{psmallmatrix}
    -k & 0 \\ 
    i & 0 \\
    0 & 1_{P_{Q}}
    \end{psmallmatrix}} \& S\oplus P_{S^{B}}\arrow[tail]{d}{}\\
K_{Q}\arrow[tail]{r}{x_{B}} \& B\oplus P_{S^{B}}\oplus P_{Q}\arrow[two heads]{r}{}\arrow[two heads]{d}{} \& M \arrow[two heads]{d}{} \\
 \& W^{B}\arrow[equals]{r}{} \& W^{B}
\end{tikzcd}
\]
in which 
$x_{B}\deff \begin{psmallmatrix}kt \\ -it \\ r\end{psmallmatrix}$. 
Note that $M$ lies in $\CU$ as both $W^{B}$ and $S\oplus P_{S^{B}}$ lie in the extension-closed subcategory $\CU$.

Since $B\in\CH$ and $P_{S^{B}}, P_{Q}\in\Proj\CB\subseteq\CW\subseteq\CH$, we have that $B\oplus P_{S^{B}}\oplus P_{Q}$ is an object in $\CH$. Thus, consider the following commutative diagram
\[
\begin{tikzcd}
A \arrow{r}{\ol{\iota_{B}a}}\arrow[equals]{d}& B\oplus P_{S^{B}}\oplus P_{Q}\arrow{d}{\ol{\pi_{B}}} \\
A \arrow{r}{\ol{a}}\arrow{d}[swap]{\ol{b}}\PB{dr}& B\arrow{d}{\ol{c}} \\
C \arrow{r}[swap]{\ol{d}}& D
\end{tikzcd}
\] 
in $\heart$, where $\iota_{B}\colon B \into B\oplus P_{S^{B}}\oplus P_{Q}$ is the canonical inclusion and 
$\pi_{B} = 
(\,1_{B}\;\, 0\;\,0\,)
\colon B\oplus P_{S^{B}}\oplus P_{Q} \onto B$ is the canonical projection. 
Notice that $\ol{\iota_{B}}$ and $\ol{\pi_{B}}$ are mutually inverse in $\heart$ as $P_{S^{B}}\oplus P_{Q}\in\CW$. 
Hence the square 
\[
\begin{tikzcd}
A \arrow{r}{\ol{\iota_{B}a}}\arrow{d}[swap]{\ol{b}}& B\oplus P_{S^{B}}\oplus P_{Q}\arrow{d}{\ol{c}\ol{\pi_{B}}} \\
C \arrow{r}[swap]{\ol{d}}& D
\end{tikzcd}
\] 
is also a pullback square in $\heart$. Setting $x_{C}\deff zf'\colon K_{Q}\to C$, we see that 
\[
\begin{array}{rcll}
    \ol{d}\circ\ol{x_{C}}   & = & \ol{dzf'} & \\
                            & = & \ol{d'r}+\ol{ckt} & \text{by }\eqref{eqn:d-x-C-equals-c-x-B-mod-W}\\
                            & = & \ol{ckt} & \text{as }\ol{d'r}=0\\
                            & = & 
                                \ol{c}\circ \ol{(\,1_{B}\;\, 0\;\,0\,)}\circ
                                \ol{\begin{psmallmatrix}kt \\ -it \\ r\end{psmallmatrix}} & \\
                            & = & \ol{c\pi_{B}}\circ \ol{x_{B}}. & \\
\end{array}
\] 
Therefore, by Lemma \ref{lem:LN-lem-2.31-PB-diagram-conflation-with-end-term-in-U-implies-a-is-epic}, we conclude that $\ol{\iota_{B}a}$ is an epimorphism. Finally, $\ol{a}$ is an epimorphism since $\ol{\iota_{B}}$ is an isomorphism in $\heart$, and we are done.
\end{proof}
\begin{rem}\label{rem:difference-in-our-proof-to-exact-and-triangulated-cases}
Theorem \ref{thm:integral-heart} unifies the analogous results for triangulated and exact categories. However, the proof we give here differs in several aspects. 
We note that our proof is not a direct generalisation of the proof for triangulated categories. 
This is because an extriangulated category does not come equipped with a suspension/shift functor. 
One way to work around this is to use that the extriangulated category has enough projectives, as one would do in the exact category case, in order to obtain what would be a negative shift of an object. 
Thus, our proof is inspired by the exact case. 
But the proof in \cite{LiuY-hearts-of-twin-cotorsion-pairs-on-exact-categories} uses the defining property of a monomorphism, which we cannot exploit in the extriangulated setting. 
This is a key difference between our proof above and the proof for exact categories.
\end{rem}
%
\section{A case when \texorpdfstring{$\heart$}{Hbar} is quasi-abelian}\label{sec:the-case-when-H-is-quasi-abelian}
%
In this section we give an analogue of \cite[Thm.\ 3.4]{Shah-quasi-abelian-hearts-of-twin-cotorsion-pairs-on-triangulated-cats} for the extriangulated setting, which also improves \cite[Thm.\ 7.4]{LiuY-hearts-of-twin-cotorsion-pairs-on-exact-categories}. 
First, let us recall a key lemma from \cite{Shah-quasi-abelian-hearts-of-twin-cotorsion-pairs-on-triangulated-cats}.
\begin{lem}
\label{lem:Shah-lemma-3.1}
\cite[Lem.\ 3.1]{Shah-quasi-abelian-hearts-of-twin-cotorsion-pairs-on-triangulated-cats} 
Let
$\CA$ be a left semi-abelian category. Suppose 
\[
\begin{tikzcd} A \arrow{r}{a}\arrow{d}[swap]{b}\PB{dr}& B\arrow{d}{c} \\ C\arrow{r}[swap]{d} & D \end{tikzcd}
\] 
is
a pullback diagram in $\CA$. Suppose we also have morphisms $x_{B}\colon X\to B$
and $x_{C}\colon X\to C$, such that $x_{B}$ is a cokernel and $cx_{B} = dx_{C}$.
Then $a\colon A\to B$ is also a cokernel in $\CA$.
\end{lem}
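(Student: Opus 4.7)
The plan is to deduce that $a$ is a cokernel by exploiting the pullback property together with the left semi-abelian factorization of $a$. First, applying the universal property of the pullback to the assumption $cx_{B} = dx_{C}$ produces a unique morphism $y\colon X \to A$ satisfying $ay = x_{B}$ and $by = x_{C}$. Since $x_{B}$ is a cokernel, it is in particular an epimorphism, and the identity $x_{B} = ay$ forces $a$ itself to be an epimorphism.

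Next, I would invoke left semi-abelianness to factor $a = ip$ with $i\colon M \to B$ a monomorphism and $p\colon A \to M$ a cokernel. The goal is then to show $i$ is an isomorphism, for then $a = ip$ inherits from $p$ the property of being a cokernel. From $a = ip$ I obtain $x_{B} = ay = i(py)$, so the cokernel $x_{B}$ factors through the monomorphism $i$.

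The decisive step uses that every cokernel is a regular, and hence strong, epimorphism. Consider the commutative square whose left vertical is $x_{B}$, right vertical is $i$, top horizontal is $py$, and bottom horizontal is $1_{B}$; the strong-epi versus mono diagonal fill yields $j\colon B \to M$ with $ij = 1_{B}$ (and $jx_{B} = py$). Thus $i$ admits a right inverse, and combined with being monic this forces $i$ to be an isomorphism. Consequently $a = ip$ is a cokernel, as required.

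I do not anticipate any genuine obstacle: once one recognises that cokernels are strong epimorphisms and combines this with the left semi-abelian factorization of $a$, the argument proceeds formally. The only subtlety worth verifying is that the strong-epi versus mono diagonal-fill property is available in $\CA$, which follows from cokernels being coequalizers (of a morphism with $0$) and hence regular epimorphisms.
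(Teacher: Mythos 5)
Your proof is correct, and since the paper only cites this lemma from Shah's earlier work (\cite[Lem.\ 3.1]{Shah-quasi-abelian-hearts-of-twin-cotorsion-pairs-on-triangulated-cats}) without reproducing a proof, there is no in-text argument to compare it against. The route you take is the expected one: pull back to get $y$ with $ay = x_B$, use this to see $a$ is epic, factor $a = ip$ by left semi-abelianness, and then show $i$ is invertible because the cokernel $x_B$ factors through the mono $i$. The only cosmetic point is that you invoke the general strong-epi/mono orthogonality, whereas one can reach the same diagonal $j$ more elementarily: writing $x_B = \cok(k)$, the relation $ipyk = x_Bk = 0$ and the monicity of $i$ give $pyk = 0$, so $py$ factors through $x_B$ as $py = j x_B$, and then $(ij)x_B = x_B$ with $x_B$ epi forces $ij = 1_B$; this is precisely the diagonal fill without the extra vocabulary.
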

\begin{thm}\label{thm:quasi-abelian-heart}
Let $(\CB,\BE,\fs)$ be an extriangulated category. Suppose $((\CS,\CT),(\CU,\CV))$ is a twin cotorsion pair on $\CB$. If $\CH=\CB^{-}$ or $\CH=\CB^{+}$, then $\heart=\CH/[\CW]$ is quasi-abelian.
\end{thm}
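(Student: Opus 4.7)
The plan is to combine Proposition~\ref{prop:semi-abelian-category-is-left-quasi-abelian-iff-right-quasi-abelian} with Theorem~\ref{thm:heart-is-semi-abelian}: since $\heart$ is semi-abelian, it is quasi-abelian as soon as either of the one-sided variants is verified. Accordingly, I would show that $\heart$ is left quasi-abelian (cokernels are stable under pullback) in the case $\CH = \CB^{-}$, and obtain the case $\CH = \CB^{+}$ by a dual argument showing right quasi-abelianness (kernels are stable under pushout). Both halves then upgrade to full quasi-abelianness via Proposition~\ref{prop:semi-abelian-category-is-left-quasi-abelian-iff-right-quasi-abelian}.

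Assuming $\CH = \CB^{-}$, the setup is a pullback square
\[
\begin{tikzcd}
A \arrow{r}{\ol{a}}\arrow{d}[swap]{\ol{b}}\PB{dr}& B\arrow{d}{\ol{c}} \\
C \arrow{r}[swap]{\ol{d}}& D
\end{tikzcd}
\]
in $\heart$ with $\ol{d}$ a cokernel, and the aim is to prove $\ol{a}$ is a cokernel. Lemma~\ref{lem:Shah-lemma-3.1}, which applies here since $\heart$ is in particular left semi-abelian, reduces the task to producing an object $X\in\CH$ together with morphisms $x_{B}\colon X \to B$ and $x_{C}\colon X \to C$ in $\CB$ satisfying $\ol{c}\,\ol{x_{B}} = \ol{d}\,\ol{x_{C}}$ and such that $\ol{x_{B}}$ is a cokernel in $\heart$. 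The construction broadly follows the template of the proof of Theorem~\ref{thm:integral-heart}: starting from the defining conflation $B\rightarrowtail W^{B}\twoheadrightarrow S^{B}$ for $B\in\CB^{-}$ together with a lift in $\CB$ of a morphism $\ol{e}\colon E\to C$ of which $\ol{d}$ is the cokernel in $\heart$, one splices these data using the octahedral axiom~\ref{ET4}, Proposition~\ref{prop:variation-of-LN-prop-1-20-same-end-terms} and Proposition~\ref{prop:exact-sequences-from-E-triangle}, together with projective replacements of the $\CS$-terms that appear, to extract $X$ and the maps $x_{B}, x_{C}$.

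The main obstacle will be certifying $\ol{x_{B}}$ as a genuine cokernel in $\heart$ rather than merely an epimorphism. In Theorem~\ref{thm:integral-heart} the analogous step was handled by Lemma~\ref{lem:LN-lem-2.31-PB-diagram-conflation-with-end-term-in-U-implies-a-is-epic}, whose hypothesis only requires the deflation appearing alongside $x_{B}$ to land in $\CU$; this is enough for stability of epimorphisms (integrality), but falls short of stability of cokernels (quasi-abelianness). The strengthening I need is that the deflation actually lands in $\CW = \CT\cap\CU$, so that the corresponding morphism in $\heart$ is zero and the universal property of a cokernel can be verified by tracing test morphisms back through Proposition~\ref{prop:exact-sequences-from-E-triangle}. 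This is precisely where the hypothesis $\CH = \CB^{-}$ (equivalently $\CB^{-}\subseteq\CB^{+}$) earns its keep: every object of $\CH$ then carries an additional $\CB^{+}$-type conflation with terms in $\CV$ and $\CW$, and these extra conflations can be introduced into the octahedral splicing to force the final deflation into $\CW$. Once $\ol{x_{B}}$ is known to be a cokernel, Lemma~\ref{lem:Shah-lemma-3.1} delivers that $\ol{a}$ is a cokernel, completing the left quasi-abelian half; the case $\CH = \CB^{+}$ is handled by the strictly dual argument, passing via right quasi-abelianness, after which Proposition~\ref{prop:semi-abelian-category-is-left-quasi-abelian-iff-right-quasi-abelian} yields the theorem.
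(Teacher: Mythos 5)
Your high-level skeleton matches the paper exactly: reduce to left quasi-abelianness via Theorem~\ref{thm:heart-is-semi-abelian} and Proposition~\ref{prop:semi-abelian-category-is-left-quasi-abelian-iff-right-quasi-abelian}, then invoke Lemma~\ref{lem:Shah-lemma-3.1} to reduce the problem to exhibiting $X$, $x_B$, $x_C$ with $\ol{x_B}$ a cokernel. The divergence, and the gap, lies in how you propose to build $X$ and certify $\ol{x_B}$.

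First, your plan leans on ``projective replacements of the $\CS$-terms,'' imported from the template of Theorem~\ref{thm:integral-heart}. But unlike Theorem~\ref{thm:integral-heart}, the statement of Theorem~\ref{thm:quasi-abelian-heart} assumes \emph{nothing} about $\CB$ having enough projectives. So that tool is simply unavailable here, and the construction you describe cannot be carried out as stated. The paper avoids projectives entirely: it uses \cite[Lem.~2.28]{LiuYNakaoka-hearts-of-twin-cotorsion-pairs-on-extriangulated-categories} to realise the cokernel $\ol{d}$ directly as the image of a conflation $C \rightarrowtail D \twoheadrightarrow S$ with $S\in\CS$, then uses $D\in\CB^{+}$ (this is where $\CH=\CB^{-}$ is used, giving $\CB^-\subseteq\CB^+$) via \cite[Lem.~2.8(1)]{LiuYNakaoka-hearts-of-twin-cotorsion-pairs-on-extriangulated-categories} to replace $c$ by a deflation, and then applies a \emph{single} invocation of (ET$4^{\op}$) to those two conflations to produce $X\rightarrowtail B\twoheadrightarrow S$ and $x_C\colon X\to C$. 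No octahedral chain, no projective covers.

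Second, your proposal that the deflation alongside $x_B$ should be ``forced into $\CW$'' so that it becomes zero in $\heart$ is aimed in the wrong direction. If $x_B$ were part of a conflation $X\rightarrowtail B\twoheadrightarrow W$ with $W\in\CW$, then $\ol{x_B}$ would be (something close to) an isomorphism in $\heart$ --- far too strong, and not what the situation gives. In the paper's argument, the conflation ends in $S\in\CS$, and this is exactly what makes the universal-property check work: to verify the weak-cokernel property one uses $\BE(\CS,\CW)=0$ (since $\CW\subseteq\CT$), and to verify that $\ol{x_B}$ is an epimorphism one uses $\BE(\CS,\CV)=0$ (since $\CS\subseteq\CU$), feeding both through Proposition~\ref{prop:exact-sequences-from-E-triangle} and \cite[Prop.~2.29]{LiuYNakaoka-hearts-of-twin-cotorsion-pairs-on-extriangulated-categories}. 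The candidate kernel $k_{x_B}\colon K_{x_B}\to X$ is constructed from the dual of Definition~\ref{def:nearly-cokernel-of-morphism-in-heart}, and $K_{x_B}\in\CoCone(\CW,\CS)=\CB^-=\CH$; being a weak cokernel that is also an epimorphism is then enough (e.g.\ \cite[Lem.~2.5]{BuanMarsh-BM2}) to conclude $\ol{x_B}=\cok(\ol{k_{x_B}})$. So the crucial idea you are missing is: don't try to kill the end term --- keep it in $\CS$ and exploit $\Ext$-orthogonality of $\CS$ against $\CW$ and $\CV$.
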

\begin{proof}
The heart $\heart$ is semi-abelian by Theorem \ref{thm:heart-is-semi-abelian}, so
we have that $\heart$ is left quasi-abelian if and only if
$\heart$ is right quasi-abelian by Proposition \ref{prop:semi-abelian-category-is-left-quasi-abelian-iff-right-quasi-abelian}.
Therefore, we will show that if $\CH=\CB^{-}$ then $\heart$
is left quasi-abelian. Showing $\heart$ is right quasi-abelian
whenever $\CH=\CB^{+}$ is similar.

Suppose $\CH=\CB^{-}$ and that we have a pullback diagram 
\[
\begin{tikzcd}
A \arrow{r}{\ol{a}}\arrow{d}[swap]{\ol{b}}\PB{dr}& B\arrow{d}{\ol{c}} \\
C \arrow{r}[swap]{\ol{d}}& D
\end{tikzcd}
\]
in $\heart$, where $\ol{d}$ is a cokernel. 
By \cite[Lem.\ 2.28]{LiuYNakaoka-hearts-of-twin-cotorsion-pairs-on-extriangulated-categories},
we may assume that, up to isomorphism in $\heart$, the morphism  $d\in\CH(C,D)$ fits into a conflation 
$\begin{tikzcd}[column sep=0.7cm]
C\arrow[tail]{r}{d}& D\arrow[two heads]{r}{}& S
\end{tikzcd}$ 
in $\CB$ with $S\in\CS$. 
Furthermore, we know $D\in\CH=\CB^+ \cap \CB^-\subseteq \CB^+$, and so by \cite[Lemma 2.8(1)]{LiuYNakaoka-hearts-of-twin-cotorsion-pairs-on-extriangulated-categories} there exist $W\in\CW$ and $w\colon W\to D$ giving a deflation $(\,c\;\,w\,)\colon B\oplus W\onto D$ in $\CB$. 
Then, in $\heart$, we have $B\oplus W \iso B$ and, up to isomorphism, $\ol{(\,c\;\,w\,)} = \ol{c}$. 
Thus, without loss of generality, we may replace $c$ by $(\,c\;\,w\,)$ and $B$ by $B\oplus W$. 
That is, we may assume $c\colon B\to D$ is part of a conflation 
$\begin{tikzcd}[column sep=0.7cm]
B'\arrow[tail]{r}{}& B\arrow[two heads]{r}{c}& D.
\end{tikzcd}$

Applying (ET$4^{\op}$) to the conflations 
$\begin{tikzcd}[column sep=0.7cm]
C\arrow[tail]{r}{d}& D\arrow[two heads]{r}{}& S
\end{tikzcd}$ 
and 
$\begin{tikzcd}[column sep=0.7cm]
B'\arrow[tail]{r}{}& B\arrow[two heads]{r}{c}& D,
\end{tikzcd}$ 
we obtain a commutative diagram 
\[
\begin{tikzcd}
B' \arrow[equals]{r}\arrow[tail]{d}{}& B'\arrow[tail]{d}{} &&\\
X \arrow[tail]{r}{x_B}\arrow[two heads]{d}[swap]{x_C}& B\arrow[two heads]{r}{}\arrow[two heads]{d}{c}& S\arrow[equals]{d} \arrow[dashed]{r}{\delta}& {}\\
C \arrow[tail]{r}{d}&D\arrow[two heads]{r}{} &S&
\end{tikzcd}
\]
in which each row and column is a conflation. 
Note that, by \cite[Lem.\ 2.9(b)]{LiuYNakaoka-hearts-of-twin-cotorsion-pairs-on-extriangulated-categories}, $X\in\CB^-=\CH$ since $B\in\CB^-$. 
Therefore, we have a commutative square 
\[
\begin{tikzcd}X \arrow{r}{\ol{x_B}}\arrow{d}[swap]{\ol{x_C}}& B\arrow{d}{\ol{c}}\\
C \arrow{r}{\ol{d}}&D
\end{tikzcd}
\]
in $\heart$, and so it is enough to show $\ol{x_B}$ is a cokernel in $\heart$ by Lemma \ref{lem:Shah-lemma-3.1}.

Dually to Definition \ref{def:nearly-cokernel-of-morphism-in-heart}, 
we obtain a morphism $k_{x_B}\colon K_{x_B}\to X$ in $\CB$ as in the following commutative diagram:
\[
\begin{tikzcd}[column sep=1.3cm]
V\arrow[tail]{r}{}\arrow[equals]{d}&K_{x_B}\arrow{d}{e}\arrow[two heads]{r}{k_{x_B}}&X\arrow[tail]{d}{x_B}\arrow[dashed]{r}{(x_{B})^{*}\delta'}& {}\\
V\arrow[tail]{r}{v}&W'\arrow[two heads]{r}{w'}&B \arrow[dashed]{r}{\delta'}& {}
\end{tikzcd}
\]
Using (ET$4^{\op}$) we obtain a commutative diagram
\[
\begin{tikzcd}
V\arrow[equals]{r}\arrow[tail]{d}&V\arrow[tail]{d}{v}&\\
P\arrow[tail]{r}{}\arrow[two heads]{d}{p}&W'\arrow[two heads]{r}{}\arrow[two heads]{d}{w'}&S\arrow[equals]{d}\arrow[dashed]{r}{\delta''}& {}\\
X\arrow[dashed]{d}{(x_{B})^{*}\delta'}\arrow[tail]{r}{x_B}&B\arrow[dashed]{d}{\delta'}\arrow[two heads]{r}{s}&S\arrow[dashed]{r}{\delta}& {}\\
{}&{}&&\\
\end{tikzcd}
\] 
in which $p_{*}\delta''=\delta$. 
Thus, the conflations 
$\begin{tikzcd}[column sep=0.9cm]
V\arrow[tail]{r}{}&K_{x_{B}}\arrow[two heads]{r}{k_{x_{B}}}& X
\end{tikzcd}$ 
and 
$\begin{tikzcd}[column sep=0.7cm]
V\arrow[tail]{r}{}&P\arrow[two heads]{r}{p}& X
\end{tikzcd}$ 
both realise the extension $(x_{B})^{*}\delta'$, and hence are equivalent. 
This yields an isomorphism $q\colon K_{x_{B}}\to P$, such that $pq=k_{x_{B}}$, and also a morphism of $\BE$-triangles 
\[
\begin{tikzcd}
K_{x_{B}}\arrow[tail]{r}{f}\arrow{d}{\iso}[swap]{q} & W'\arrow[two heads]{r}{}\arrow[equals]{d}&\arrow[equals]{d}S \arrow[dashed]{r}{\eps}& {} \\
P\arrow[tail]{r}{} & W'\arrow[two heads]{r}{}&S \arrow[dashed]{r}{\delta''}& {} 
\end{tikzcd}
\]
where $q_{*}\eps =\delta''$. 
This implies that there is also a morphism   
\[
\begin{tikzcd}
K_{x_{B}}\arrow[tail]{r}{f}\arrow{d}{k_{x_{B}}}&W'\arrow[two heads]{r}{}\arrow{d}{w'}& S\arrow[equals]{d}\arrow[dashed]{r}{\eps}&{}\\
X\arrow[tail]{r}{x_{B}}&B\arrow[two heads]{r}{s}& S\arrow[dashed]{r}{\delta}&{}
\end{tikzcd}
\] 
of $\BE$-triangles, since  $(k_{x_{B}})_{*}\eps=p_{*}q_{*}\eps=p_{*}\delta''=\delta$. Furthermore, we also see that $K_{x_{B}}\in\CoCone(\CW,\CS)=\CB^{-}=\CH$.

We claim that $\ol{x_{B}}\colon X\to B$ is a cokernel of $\ol{k_{x_{B}}}\colon K_{x_{B}}\to X$ in $\heart$. 
We will first show that $\ol{x_{B}}$ is a weak cokernel of $\ol{k_{x_{B}}}$, and secondly that $\ol{x_{B}}$ is an epimorphism; this is enough by, for example, \cite[Lem.\ 2.5]{BuanMarsh-BM2}.

Note that $x_{B}\circ k_{x_{B}}= w'\circ f$ factors through $\CW$, so we have $\ol{x_{B}}\circ \ol{k_{x_{B}}}=0$ in $\heart$. 
Now suppose that there is $\ol{g}\colon X\to Y$ in $\heart$ such that $\ol{g}\circ \ol{k_{x_{B}}}=0$ in $\heart$. 
Then $g\circ k_{x_{B}}\colon K_{x_{B}}\to Y$ factors through $\CW$. Thus, there exists a commutative square 
\[
\begin{tikzcd}
K_{x_{B}} \arrow{r}{k_{x_{B}}}\arrow{d}{h}& X\arrow{d}{g}\\
W''\arrow{r}{i} & Y
\end{tikzcd}
\]
in $\CB$, with $W''\in\CW$. 

Since 
$\begin{tikzcd} [column sep=0.7cm]
X\arrow[tail]{r}{x_B}&B\arrow[two heads]{r}{s}&S\arrow[dashed]{r}{\delta}&{}
\end{tikzcd}$ 
is an $\BE$-triangle, by Proposition \ref{prop:exact-sequences-from-E-triangle} we have an exact sequence 
\begin{equation}\label{eqn:exact-sequence-induced-from-delta}
\begin{tikzcd}[column sep=1.5cm]
\CB(B,Y)\arrow{r}{\CB(x_{B},Y)}& \CB(X,Y)\arrow{r}{(\delta^{\sharp})_{Y}} & \BE(S,Y)\arrow{r}{\BE(s,Y)}& \BE(B,Y),
\end{tikzcd}
\end{equation}
where 
$(\delta^{\sharp})_{Y}:\CB(X,Y)\rightarrow \BE(S,Y)$ 
is given by 
$(\delta^{\sharp})_{Y}(r)= r_{*}\delta$. 
Note that  
\[
(\delta^{\sharp})_{Y}(g) = g_{*}(\delta) = g_{*}(k_{x_{B}})_{*}\eps = (gk_{x_{B}})_{*}\eps = (ih)_{*}\eps = i_{*}h_{*}\eps = 0,
\] 
as $h_{*}\eps\in \BE(S,W'')=0$ because $\CW\subseteq\CT$. 
Thus, $g$ is in the kernel of the morphism $(\delta^{\sharp})_{Y}$ and so, by the exactness of \eqref{eqn:exact-sequence-induced-from-delta}, there exists $j\colon B\to Y$ such that $j x_{B}=g$.
%
%
%
As $B,Y\in\CH$, we have $j\in\CH(B,Y)$ and $\ol{g}=\ol{j}\circ\ol{x_{B}}$. 
Hence, $\ol{x_{B}}$ is a weak cokernel for $\ol{k_{x_{B}}}$ in $\heart$.

Lastly, note that for any $V\in\CV$ we have, by Proposition \ref{prop:exact-sequences-from-E-triangle}, an exact sequence 
\[
\begin{tikzcd}[column sep=1.5cm]
0=\BE(S,V)\arrow{r}{} & \BE(B,V) \arrow{r}{\BE(x_{B},V)} &\BE(X,V),
\end{tikzcd}
\]
where $\BE(S,V)=0$, as $\CS\subseteq \CU$ and $(\CU,\CV)$ is a cotorsion pair. 
That is, $\BE(x_{B},V)$ is monomorphic for any $V\in\CV$, and therefore $\ol{x_{B}}$ is an epimorphism in $\heart$ by \cite[Prop.\ 2.29]{LiuYNakaoka-hearts-of-twin-cotorsion-pairs-on-extriangulated-categories}. 
Hence, $\ol{x_{B}}$ is the cokernel of $\ol{k_{x_{B}}}$ in the (left) semi-abelian category $\heart$, so $\ol{a}$ is also a cokernel in $\heart$ by Lemma \ref{lem:Shah-lemma-3.1} 
and we are done.
\end{proof}
The next corollary gives a unification of \cite[Thm.\ 7.4]{LiuY-hearts-of-twin-cotorsion-pairs-on-exact-categories} and \cite[Cor.\ 3.5]{Shah-quasi-abelian-hearts-of-twin-cotorsion-pairs-on-triangulated-cats}, 
and follows immediately from Theorems \ref{thm:integral-heart} and \ref{thm:quasi-abelian-heart}.
\begin{cor}\label{cor:heart-is-integral-and-quasi-abelian-if-U-in-T-or-T-in-U}
Let $(\CB,\BE,\fs)$ be an extriangulated category with enough projectives and injectives. 
Suppose $((\CS,\CT),(\CU,\CV))$  is a twin cotorsion pair on $\CB$. 
If $\CT\subseteq\CU$ or $\CU\subseteq \CT$, then $\heart=\CH/[\CW]$ is integral and quasi-abelian.
\end{cor}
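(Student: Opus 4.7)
The plan is to split into the two cases \(\CT\subseteq\CU\) and \(\CU\subseteq\CT\) and verify, in each case, the hypotheses of Theorems \ref{thm:integral-heart} and \ref{thm:quasi-abelian-heart}. The key observation is that in each case, the subcategory \(\CW = \CT\cap\CU\) collapses to one of \(\CT\) or \(\CU\), which in turn forces either \(\CB^{+}\) or \(\CB^{-}\) to equal all of \(\CB\), using the defining decompositions of the cotorsion pairs.

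More precisely, first I would treat \(\CU\subseteq\CT\). Then \(\CW=\CU\), so the condition \(\Proj\CB\subseteq\CW\) holds because \(\Proj\CB\subseteq\CS\subseteq\CU\) (by Remark \ref{rem:projective-injective-definition-coincide-with-defs-for-exact-categories-triangulated-category-has-enough-of-them}(ii) and the twin cotorsion pair inclusion \(\CS\subseteq\CU\)). Moreover, \(\CU\subseteq\CT\) together with the split conflation \(\begin{tikzcd}[column sep=0.5cm] 0 \arrow[tail]{r} & U \arrow[two heads]{r} & U \end{tikzcd}\) (with \(0\in\CS\), \(U\in\CT\)) yields \(\CU\subseteq\CS * \CT\). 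Hence Theorem \ref{thm:integral-heart} gives that \(\heart\) is (left, so) integral. For quasi-abelianness, since \(\CW=\CU\), we have \(\CB^{+}=\Cone(\CV,\CW)=\Cone(\CV,\CU)=\CB\) using \(\CB=\Cone(\CV,\CU)\) from the cotorsion pair \((\CU,\CV)\). Thus \(\CH=\CB^{-}\cap\CB^{+}=\CB^{-}\), and Theorem \ref{thm:quasi-abelian-heart} applies.

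The case \(\CT\subseteq\CU\) is dual in spirit. Then \(\CW=\CT\), and \(\Inj\CB\subseteq\CV\subseteq\CT=\CW\) (using the dual containment \(\CV\subseteq\CT\) from the twin cotorsion pair). For any \(T\in\CT\subseteq\CU\), the split conflation \(\begin{tikzcd}[column sep=0.5cm] T \arrow[tail]{r} & T \arrow[two heads]{r} & 0 \end{tikzcd}\) witnesses \(T\in\CU * \CV\), so \(\CT\subseteq\CU * \CV\). By the right-integral half of Theorem \ref{thm:integral-heart}, \(\heart\) is right integral, hence integral by semi-abelianness (Theorem \ref{thm:heart-is-semi-abelian}) together with Proposition \ref{prop:semi-abelian-category-is-left-integral-iff-right-integral}. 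On the quasi-abelian side, \(\CW=\CT\) forces \(\CB^{-}=\CoCone(\CW,\CS)=\CoCone(\CT,\CS)=\CB\), using \(\CB=\CoCone(\CT,\CS)\) from the cotorsion pair \((\CS,\CT)\). Therefore \(\CH=\CB^{+}\), and Theorem \ref{thm:quasi-abelian-heart} delivers quasi-abelianness.

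There is no substantive obstacle here: the argument is a straightforward case analysis and bookkeeping check of the hypotheses of the two main theorems, leveraging the identification of \(\CW\) with either \(\CT\) or \(\CU\). The only point requiring a small amount of care is recalling the correct orientation of the split conflations used to show \(\CU\subseteq\CS * \CT\) and \(\CT\subseteq\CU * \CV\), and invoking the inclusions \(\Proj\CB\subseteq\CS\), \(\Inj\CB\subseteq\CV\), \(\CS\subseteq\CU\) and \(\CV\subseteq\CT\) that are built into the setup of a twin cotorsion pair on an extriangulated category with enough projectives and injectives.
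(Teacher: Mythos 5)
Your proposal is correct and takes essentially the same route as the paper, which simply notes that the corollary ``follows immediately from Theorems \ref{thm:integral-heart} and \ref{thm:quasi-abelian-heart}''; you have supplied exactly the straightforward hypothesis-check (identifying $\CW$ with $\CU$ or $\CT$, the split conflations for $\CU\subseteq\CS*\CT$ or $\CT\subseteq\CU*\CV$, and the collapse of $\CB^{+}$ or $\CB^{-}$ to $\CB$) that makes it immediate.
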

%
\section{Localisation of an integral heart}
\label{sec:localisation-of-an-integral-heart}
%
%
In this section, we fix an extriangulated category $(\CB,\BE,\fs)$ with enough projectives and injectives. 
We also suppose that there is a twin cotorsion pair $((\CS,\CT),(\CU,\CV))$ on $\CB$ with $\CT=\CU$. 
Note that for this twin cotorsion pair we have $\CW=\CT=\CU$, $\CB^{+}=\CB=\CB^{-}$ and so its heart is $\heart=\CB/[\CW]$. 
By Corollary \ref{cor:heart-is-integral-and-quasi-abelian-if-U-in-T-or-T-in-U}, $\heart$ is integral (and quasi-abelian), and hence the class $\CR$ of regular morphisms in $\heart$ admits a \emph{calculus of left fractions} (see \cite[\S I.2]{GabrielZisman-calc-of-fractions}) by \cite[Prop.\ 6]{Rump-almost-abelian-cats}. 
(This also implies that $\heart_{\CR}$ is an abelian category by \cite[Thm.\ 4.8]{BuanMarsh-BM2}.) 
Thus, the objects of the localisation $\heart_{\CR}$ are the objects of $\heart$, and a morphism $X\to Y$ in $\heart_{\CR}$ is a \emph{left fraction} $[\ol{f},\ol{r}]_{\LF}$ of the form 
\[
\begin{tikzcd}
X \arrow{r}{\ol{f}}& A & Y\arrow{l}[swap]{\ol{r}}, 
\end{tikzcd}
\] 
up to a certain equivalence, where $\ol{f}\in\heart(X,A)$ and $\ol{r}\in\CR$ (see \cite[\S I.2]{GabrielZisman-calc-of-fractions} for details). 
The localisation functor $L_{\CR}\colon \heart\to\heart_{\CR}$ maps a morphism $\ol{f}\colon X\to A$ in $\heart$ to $L_{\CR}(\ol{f})=[\ol{f},\ol{1_{A}}]_{\LF}$. 
In particular, any morphism $\ol{r}\colon Y\to A$ in the class $\CR$ of regular morphisms in 
$\heart$ is mapped to $L_{\CR}(\ol{r})=[\ol{r},\ol{1_{Y}}]_{\LF}$, which is invertible with inverse $\tensor[]{[\ol{r},\ol{1_{Y}}]}{_{\LF}^{-1}}=[\ol{1_{A}},\ol{r}]_{\LF}$. 
Furthermore, $L_{\CR}\colon\overline{\CH}\to\overline{\CH}_{\CR}$
is an additive functor; see \cite[Rem.\ 4.3]{BuanMarsh-BM2}.

Let us denote by $\ol{\SH}\deff\ol{\SH}_{(\CS,\CT)}$ the heart $\CoCone(\CS,\CS)/[\CS]$ of the \emph{degenerate} twin cotorsion pair $((\CS,\CT),(\CS,\CT))$. 
The category $\ol{\SH}$ is also the heart of the \emph{single} cotorsion pair $(\CS,\CT)$; see \cite{LiuYNakaoka-hearts-of-twin-cotorsion-pairs-on-extriangulated-categories}. 
In this section, we will show that there is an equivalence $\heart_{\CR}\simeq\ol{\SH}$, giving an analogue of \cite[Thm.\ 4.8]{Shah-quasi-abelian-hearts-of-twin-cotorsion-pairs-on-triangulated-cats} for the extriangulated setting; see Theorem \ref{thm:T-is-U-implies-localisation-of-H-at-regular-morphisms-is-equivalent-to-heart-of-cotorsion-pair-S-T}. 
However, we note that this section improves some results from \cite[\S 4]{Shah-quasi-abelian-hearts-of-twin-cotorsion-pairs-on-triangulated-cats} since we make no Krull-Schmidt assumption in this article.

Let $\iota\colon \SH=\CoCone(\CS,\CS)\to \CB=\CH$ be the canonical inclusion functor, and let $Q_{[\CS]}\colon \SH\to \ol{\SH}$ and $Q_{[\CW]}\colon \CH\to \heart$ be the canonical additive quotient functors. 
Note that since $\CS\subseteq\CU=\CW$ under our assumptions, any morphism in the ideal $[\CS]$ in $\SH$ vanishes under the composition $Q_{[\CW]}\circ\iota\colon \SH\to\heart$. Therefore, there is a unique additive functor $F\colon\overline{\SH}\to\overline{\CH}$ that makes the diagram of functors 
\[
\begin{tikzcd}
\SH
\arrow{r}{\iota}\arrow{d}[swap]{Q_{[\CS]}}\commutes{dr}& \CH \arrow{d}{Q_{[\CW]}} \\
\overline{\SH} \arrow[dotted]{r}[swap]{F}& \overline{\CH}
\end{tikzcd}
\] 
commute. 
In particular, $F$ is the identity on objects and maps the coset $f+[\CS](X,Y)$ to the coset $\ol{f}=f+[\CW](X,Y)$ for any morphism $f\colon X\to Y$ in $\CB$. 
Recall that a cotorsion pair $(\CU,\CV)$ is said to be \emph{rigid} if $\CU\subseteq\CV$. 
The next result follows from 
\cite[Prop.\ 3.3]{Liu-localisations-of-the-hearts-of-cotorsion-pairs}, 
noting that $(\CS,\CT)$ is rigid as $\CS\subseteq \CU=\CT$. 
Furthermore, this improves \cite[Lem.\ 4.3]{Shah-quasi-abelian-hearts-of-twin-cotorsion-pairs-on-triangulated-cats} since no Krull-Schmidt restriction is needed here. 
\begin{lem}
\label{lem:every-object-of-B-is-isomorphic-in-localisation-to-an-object-lying-in-CoConeS-S}
Let $X$ be an arbitrary object of $\CB$. Then there exists a conflation 
\[
\begin{tikzcd}
Z\arrow[tail]{r}{g} & Y\arrow[two heads]{r}{f} & X,
\end{tikzcd}
\] 
such that  $Y\in\CoCone(\CS,\CS)=\SH$, 
$Z\in\CW$ and $\ol{f}$ is a regular morphism in $\heart$.
\end{lem}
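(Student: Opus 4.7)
My plan is to invoke \cite[Prop.\ 3.3]{Liu-localisations-of-the-hearts-of-cotorsion-pairs}, noting that $(\CS,\CT)$ is rigid (that is, $\CS\subseteq\CT$) since $\CS\subseteq\CU=\CT$. Unpacked, the construction proceeds by resolving $X$ twice using $(\CS,\CT)$ and gluing via (ET$4^{\op}$). First, from $\CB=\CoCone(\CT,\CS)$, I extract a conflation $X\rightarrowtail T\twoheadrightarrow S$ with $T\in\CT$ and $S\in\CS$. Second, from $\CB=\Cone(\CT,\CS)$ applied to the object $T$, I obtain a conflation $T'\rightarrowtail S'\twoheadrightarrow T$ with $T'\in\CT$ and $S'\in\CS$. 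Applying (ET$4^{\op}$) to the composable deflations $S'\twoheadrightarrow T\twoheadrightarrow S$ produces a $3\times 3$ diagram whose new object $Y$ fits into a middle row $Y\rightarrowtail S'\twoheadrightarrow S$ and a leftmost column $T'\rightarrowtail Y\twoheadrightarrow X$. Setting $Z\deff T'$ and letting $f\colon Y\twoheadrightarrow X$ be the deflation, the middle row witnesses $Y\in\CoCone(\CS,\CS)=\SH$, and $Z=T'$ lies in $\CT=\CW$ (recall $\CW=\CT\cap\CU=\CT$ since $\CU=\CT$).

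To verify that $\ol{f}$ is regular in $\heart$, I first check the epimorphism direction: by Proposition \ref{prop:f-epic-in-heart-iff-C-f-in-U}, it suffices to show $C_{f}\in\CU=\CT$. Using the middle row $Y\rightarrowtail S'\twoheadrightarrow S$ as the $\CB^{-}$-resolution of $Y$ in Definition \ref{def:nearly-cokernel-of-morphism-in-heart}---valid because $S'\in\CS\subseteq\CW$ and $S\in\CS$---the defining pushout reproduces the bottom row $X\rightarrowtail T\twoheadrightarrow S$ of the $3\times 3$ diagram, so $C_{f}\iso T\in\CT$.

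The monomorphism direction is the principal obstacle. Here I would use the dual of Proposition \ref{prop:f-epic-in-heart-iff-C-f-in-U}, or the criterion in the style of \cite[Prop.\ 2.29]{LiuYNakaoka-hearts-of-twin-cotorsion-pairs-on-extriangulated-categories} already employed at the end of the proof of Theorem \ref{thm:quasi-abelian-heart}: $\ol{f}$ is a monomorphism if $\BE(U,f)\colon\BE(U,Y)\to\BE(U,X)$ is injective for every $U\in\CT$. The plan is to feed the conflations of the $3\times 3$ diagram into the long exact sequences of Proposition \ref{prop:exact-sequences-from-E-triangle} and exploit the cotorsion-pair vanishings $\BE(\CS,\CT)=0$, together with $\CV\subseteq\CT$ from the twin hypothesis, to force the obstructing connecting maps to vanish. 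This is the delicate step of Liu's argument where the rigidity of $(\CS,\CT)$ is genuinely used.
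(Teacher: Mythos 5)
Your proposal follows the same route as the paper: the paper proves this lemma simply by citing \cite[Prop.\ 3.3]{Liu-localisations-of-the-hearts-of-cotorsion-pairs}, noting that $(\CS,\CT)$ is rigid because $\CS\subseteq\CU=\CT$, and you invoke the same citation. The unpacking you give is essentially Liu's construction and is correct in its structural part: resolving $X$ via $X\rightarrowtail T\twoheadrightarrow S$ with $T\in\CT$, $S\in\CS$, then resolving $T$ via $T'\rightarrowtail S'\twoheadrightarrow T$ with $T'\in\CT$, $S'\in\CS$, and applying (ET$4^{\op}$) does produce the left column $T'\rightarrowtail Y\twoheadrightarrow X$ together with the middle row $Y\rightarrowtail S'\twoheadrightarrow S$, so $Y\in\CoCone(\CS,\CS)$ and $Z=T'\in\CT=\CW$. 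Your epi verification via Proposition \ref{prop:f-epic-in-heart-iff-C-f-in-U} is also fine, once one notes that the middle row is a legitimate $\CB^{-}$-resolution because $S'\in\CS\subseteq\CW$ (this uses $\CS\subseteq\CT$, i.e.\ rigidity) and that condition (ii) of (ET$4^{\op}$) identifies the bottom row with a realisation of $f_{*}\delta''$, so $C_{f}\iso T\in\CU$.

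The one slip is in the mono direction, which you leave as a sketch. The criterion you quote -- ``$\ol{f}$ is a monomorphism if $\BE(U,f)$ is injective for every $U\in\CT$'' -- is not the useful dual of \cite[Prop.\ 2.29]{LiuYNakaoka-hearts-of-twin-cotorsion-pairs-on-extriangulated-categories}. Dualising swaps the roles of the two cotorsion pairs, so the correct test class is $\CS$, not $\CT$: $\ol{f}$ is a monomorphism provided $\BE(S,f)$ is injective for every $S\in\CS$. As stated with $\CT$ the condition cannot be verified here, since you would need $\BE(\CT,\CT)=0$. With the corrected class $\CS$ the step is not delicate at all: applying Proposition \ref{prop:exact-sequences-from-E-triangle} to the conflation $T'\rightarrowtail Y\twoheadrightarrow X$ gives, for each $S\in\CS$, an exact sequence $\BE(S,T')\to\BE(S,Y)\xrightarrow{\ \BE(S,f)\ }\BE(S,X)$ with $\BE(S,T')=0$ because $T'\in\CT$ and $(\CS,\CT)$ is a cotorsion pair. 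So rigidity's actual role is in establishing $\CS\subseteq\CW$ (for the epi argument) and $\CW=\CT$, not in the monomorphism step you flag as delicate.
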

Set $G\deff L_{\CR}\circ F$. 
Then $G(X)=X$ and $G(f+[\CS](X,Y))=[f+[\CW](X,Y),1_{Y}]_{\LF}=[\ol{f},1_{Y}]_{\LF}$. 
We show that $G$ is an equivalence of categories over several steps in the remainder of this section. 
The next result is an analogue of \cite[Prop.\ 4.4]{Shah-quasi-abelian-hearts-of-twin-cotorsion-pairs-on-triangulated-cats}, and the proof easily generalises using Lemma \ref{lem:every-object-of-B-is-isomorphic-in-localisation-to-an-object-lying-in-CoConeS-S}, so we omit the proof here. 
\begin{prop}\label{prop:G-is-dense}
The functor $G\colon\overline{\mathscr{H}}\to\overline{\CH}_{\CR}$
is dense.
\end{prop}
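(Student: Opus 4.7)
The plan is to leverage Lemma \ref{lem:every-object-of-B-is-isomorphic-in-localisation-to-an-object-lying-in-CoConeS-S} directly. Recall that $F\colon\ol{\SH}\to\heart$ is the identity on objects, and $L_{\CR}\colon\heart\to\heart_{\CR}$ is the identity on objects as well, so $G=L_{\CR}\circ F$ is the identity on objects. Hence, to show that $G$ is dense, it suffices to exhibit, for each object $X$ of $\heart_{\CR}$ (equivalently, each $X\in\CB$), an object $Y\in\SH=\CoCone(\CS,\CS)$ together with an isomorphism $G(Y)\iso X$ in $\heart_{\CR}$.

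First, I would fix an arbitrary $X\in\CB$ and apply Lemma \ref{lem:every-object-of-B-is-isomorphic-in-localisation-to-an-object-lying-in-CoConeS-S} to obtain a conflation
\[
\begin{tikzcd}
Z\arrow[tail]{r}{g} & Y\arrow[two heads]{r}{f} & X
\end{tikzcd}
\]
with $Y\in\SH$, $Z\in\CW$, and $\ol{f}\in\heart(Y,X)$ regular. Since $G(Y)=Y$ and since $\ol{f}\in\CR$ is inverted by $L_{\CR}$, the morphism $L_{\CR}(\ol{f})=[\ol{f},\ol{1_{X}}]_{\LF}$ is an isomorphism $G(Y)\to X$ in $\heart_{\CR}$. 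This establishes density.

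There is essentially no obstacle here: Lemma \ref{lem:every-object-of-B-is-isomorphic-in-localisation-to-an-object-lying-in-CoConeS-S} packages all the extriangulated content, and the remainder is a formal consequence of the universal property of the Gabriel--Zisman localisation. The same strategy underlies the triangulated analogue \cite[Prop.\ 4.4]{Shah-quasi-abelian-hearts-of-twin-cotorsion-pairs-on-triangulated-cats}, and nothing in that argument relies on features of triangulated categories unavailable to us here, now that the lemma has been generalised without a Krull--Schmidt hypothesis.
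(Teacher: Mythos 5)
Your proof is correct and is precisely the argument the paper has in mind; the paper omits it, stating only that it "easily generalises" from the triangulated case via Lemma \ref{lem:every-object-of-B-is-isomorphic-in-localisation-to-an-object-lying-in-CoConeS-S}, and you have supplied exactly those details: $G$ is the identity on objects, and the regular morphism $\ol{f}\colon Y\to X$ produced by the lemma becomes an isomorphism $G(Y)\to X$ after applying $L_{\CR}$.
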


We have an analogue of \cite[Lem.\ 4.5]{Shah-quasi-abelian-hearts-of-twin-cotorsion-pairs-on-triangulated-cats}.
\begin{lem}\label{lem:X-in-CoCone-S-S-and-f-X-to-Y-factors-through-W-implies-f-factors-through-S}
Suppose $X\in\CoCone(\CS,\CS)$ and $f\colon X\to Y$ is a morphism
in $\CB$. If $f$ factors through $\CW$, then $f$ factors through
$\CS$.
\end{lem}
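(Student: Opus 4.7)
The plan is to exploit the conflation witnessing $X \in \CoCone(\CS, \CS)$ to lift the given factorisation through $\CW$ up to one through $\CS$. Concretely, fix a conflation
\[
\begin{tikzcd}[column sep=0.7cm]
X \arrow[tail]{r}{a} & S_{1} \arrow[two heads]{r}{b} & S_{2}
\end{tikzcd}
\]
with $S_{1}, S_{2} \in \CS$, realising some extension $\delta \in \BE(S_{2}, X)$, and write $f = ih$ with $h \colon X \to W$, $i \colon W \to Y$ and $W \in \CW$. The key observation is that the codomain $S_{1}$ of the inflation $a$ already lies in $\CS$. Hence it suffices to produce a morphism $h' \colon S_{1} \to W$ with $h = h' a$; then $f = (ih')\circ a$ exhibits $f$ as factoring through $S_{1} \in \CS$.

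To produce such an $h'$, I would apply Proposition \ref{prop:exact-sequences-from-E-triangle} with second variable $W$ to the above $\BE$-triangle, obtaining an exact sequence
\[
\begin{tikzcd}[column sep=1.1cm]
\CB(S_{1},W) \arrow{r}{-\circ a} & \CB(X,W) \arrow{r}{(\delta^{\sharp})_{W}} & \BE(S_{2},W).
\end{tikzcd}
\]
Thus the existence of $h'$ reduces to proving $\BE(S_{2}, W) = 0$. This is immediate from the hypotheses: since $S_{2} \in \CS$ and $W \in \CW = \CT \cap \CU \subseteq \CT$, the defining vanishing $\BE(\CS, \CT) = 0$ of the cotorsion pair $(\CS, \CT)$ yields $\BE(S_{2}, W) = 0$.

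I do not anticipate any real obstacle here: the proof is essentially a direct application of the long exact sequence for Hom and $\BE$ induced by an $\BE$-triangle, together with the trivial inclusion $\CW \subseteq \CT$. The only point requiring any attention is parsing the definition of $\CoCone(\CS, \CS)$ so as to recognise that the inflation out of $X$ already has codomain in $\CS$, and hence provides the required factorisation once the lifting of $h$ has been carried out.
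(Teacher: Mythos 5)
Your proof is correct and follows essentially the same strategy as the paper's: apply Proposition \ref{prop:exact-sequences-from-E-triangle} to the conflation $X \rightarrowtail S_1 \twoheadrightarrow S_2$ witnessing $X\in\CoCone(\CS,\CS)$ and invoke $\BE(\CS,\CW)=0$ (via $\CW\subseteq\CT$). The paper takes the long exact sequence with second variable $Y$ and shows directly that $f_*\delta = b_*a_*\delta = 0$, whereas you take it with second variable $W$ and first lift $h\colon X\to W$ through $a$; these are cosmetically different packagings of the same idea.
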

\begin{proof}
Suppose $f\colon X\to Y$ factors as $f=ba$ for some $a\colon X\to W$ and $b\colon W\to Y$, where $W\in\CW$. 
Since $X\in\CoCone(\CS,\CS)$, there is an $\BE$-triangle
$\begin{tikzcd}[column sep=0.7cm]
X\arrow[tail]{r}{s} & S_{1} \arrow[two heads]{r}{}& S_{0} \arrow[dashed]{r}{\delta}& {},
\end{tikzcd}$ 
with $S_{0},S_{1}\in\CS$. 
By Proposition \ref{prop:exact-sequences-from-E-triangle}, there is an exact sequence 
\[
\begin{tikzcd}[column sep=1.3cm]
\CB(S_{1},Y) \arrow{r}{\CB(s,Y)}& \CB(X,Y)\arrow{r}{(\delta^{\sharp})_{Y}} & \BE(S_{0},Y),
\end{tikzcd}
\] 
where $(\delta^{\sharp})_{Y}(h)=h_{*}\delta$ for any $h\colon X\to Y$ in $\CB$. 
Note that $a_{*}\delta\in\BE(S_{0},W)=0$ because $((\CS,\CT),(\CU,\CV))$ is a twin cotorsion pair and $\CW=\CU$, so $(\delta^{\sharp})_{Y}(f)=f_{*}\delta=b_{*}a_{*}\delta=0$. 
Hence, there exists $g\colon S_{1}\to Y$ such that $gs=f$ and we see that $f$ factors through $\CS$.
\end{proof}
An analogue of \cite[Prop.\ 4.6]{Shah-quasi-abelian-hearts-of-twin-cotorsion-pairs-on-triangulated-cats} follows immediately, using the lemma above, hence we omit the proof.
\begin{prop}\label{prop:G-is-faithful}
The functor $G\colon\overline{\SH}\to\heart_{\CR}$
is faithful.
\end{prop}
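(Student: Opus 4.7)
The plan is to reduce faithfulness of $G$ to the following statement: for any $f\in\SH(X,Y)$, if $G(\ol{f})=0$ in $\heart_{\CR}$ then $\ol{f}=0$ in $\overline{\SH}$. Because $G=L_{\CR}\circ F$ is additive, faithfulness is equivalent to having trivial kernel on morphisms, so it suffices to treat a single $f\colon X\to Y$ in $\SH$ with $[\ol{f},\ol{1_{Y}}]_{\LF}=[\ol{0},\ol{1_{Y}}]_{\LF}$ in $\heart_{\CR}$.

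First I would unfold this equality of left fractions on $(\heart,\CR)$, following \cite[\S I.2]{GabrielZisman-calc-of-fractions}. The equivalence relation on left fractions supplies morphisms $\ol{a},\ol{b}\colon Y\to Z$ in $\heart$ satisfying $\ol{a}\circ\ol{f}=\ol{b}\circ 0$, $\ol{a}\circ\ol{1_{Y}}=\ol{b}\circ\ol{1_{Y}}$, and with $\ol{a}\circ\ol{1_{Y}}\in\CR$. In particular, $\ol{a}\in\CR$ is a regular morphism in $\heart$ and $\ol{a}\circ\ol{f}=0$ there.

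Since a regular morphism is in particular a monomorphism, $\ol{a}\circ\ol{f}=0=\ol{a}\circ 0$ forces $\ol{f}=0$ in $\heart$, so $f\colon X\to Y$ factors through some object of $\CW$ in $\CB$. The object $X$ belongs to $\SH=\CoCone(\CS,\CS)$, so Lemma \ref{lem:X-in-CoCone-S-S-and-f-X-to-Y-factors-through-W-implies-f-factors-through-S} promotes this to a factorisation through $\CS$. Hence $\ol{f}=0$ in $\overline{\SH}$, as required.

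I expect no real obstacle, as the substantive content has already been isolated in the preceding lemma. The only point requiring a bit of care is the correct unpacking of the equivalence of left fractions to extract the regular morphism $\ol{a}\colon Y\to Z$; once $\ol{a}$ is in hand, the argument closes immediately using the cancellation property of monomorphisms together with Lemma \ref{lem:X-in-CoCone-S-S-and-f-X-to-Y-factors-through-W-implies-f-factors-through-S}.
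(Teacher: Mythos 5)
Your proof is correct and matches the argument the paper intends: the paper omits the proof, noting only that it follows as in Prop.\ 4.6 of the cited Shah paper using Lemma \ref{lem:X-in-CoCone-S-S-and-f-X-to-Y-factors-through-W-implements-f-factors-through-S}, and your argument is exactly that -- reduce to $G(\ol{f})=0\Rightarrow\ol{f}=0$ via additivity, unpack the equality of left fractions to produce a regular $\ol{a}$ with $\ol{a}\ol{f}=0$, cancel using that $\ol{a}$ is a monomorphism to conclude $\ol{f}=0$ in $\heart$, and then upgrade the factorisation through $\CW$ to one through $\CS$ via the lemma.

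(Minor typographical note: the lemma reference above should be \ref{lem:X-in-CoCone-S-S-and-f-X-to-Y-factors-through-W-implies-f-factors-through-S}; you cite it correctly in your write-up, and the unpacking of the left-fraction equivalence with the common target $Z$ and the pair $\ol{a},\ol{b}$ is the standard one and is handled correctly.)
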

For the next proposition, we have adapted methods from 
\cite{Liu-localisations-of-the-hearts-of-cotorsion-pairs}. 
%
\begin{prop}\label{prop:G-is-full}
The functor $G\colon\overline{\SH}\to\heart_{\CR}$
is full.
\end{prop}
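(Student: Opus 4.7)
My plan is to take a morphism $[\ol f,\ol r]_{\LF}\colon X \to Y$ in $\heart_\CR$, with $X, Y \in \SH$, $\ol f\colon X \to A$ in $\heart$, and $\ol r\colon Y \to A$ regular, and to construct a morphism $h\colon X \to Y$ in $\CB$ with $G(h + [\CS](X,Y)) = [\ol f,\ol r]_{\LF}$. The overall strategy is first to replace $A$ by an object of $\SH$ using Lemma \ref{lem:every-object-of-B-is-isomorphic-in-localisation-to-an-object-lying-in-CoConeS-S}, then to transfer the problem into the abelian category $\ol\SH$ via the full faithfulness of $F$.

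First I would apply Lemma \ref{lem:every-object-of-B-is-isomorphic-in-localisation-to-an-object-lying-in-CoConeS-S} to $A$, producing a conflation $\begin{tikzcd}[column sep=0.5cm] Z\arrow[tail]{r}& A^{*}\arrow[two heads]{r}{\pi}& A\end{tikzcd}$ in $\CB$ with $A^{*}\in\SH$, $Z\in\CW$ and $\ol\pi$ regular in $\heart$. Denoting the extension realised by this conflation by $\delta\in\BE(A,Z)$, the realisability of $\BE$ together with (ET3$^{\op}$) allows me to pull back along $r\colon Y\to A$ and $f\colon X\to A$: I obtain conflations $\begin{tikzcd}[column sep=0.5cm] Z\arrow[tail]{r}& Q\arrow[two heads]{r}{q}& Y\end{tikzcd}$ realising $r^{*}\delta$ and $\begin{tikzcd}[column sep=0.5cm] Z\arrow[tail]{r}& P\arrow[two heads]{r}{p}& X\end{tikzcd}$ realising $f^{*}\delta$, together with morphisms $\tilde r\colon Q \to A^{*}$ and $\tilde f\colon P \to A^{*}$ satisfying $\pi\tilde r = rq$ and $\pi\tilde f = fp$. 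A technical check using the extension-closure of $\CS$ (Remark \ref{rem:U-V-extension-closed-and-proj-in-U-inj-in-V}) and repeated applications of the extriangulated axioms shows that $P, Q$ both lie in $\SH$, and a further verification confirms that $\ol p$ and $\ol q$ are isomorphisms in $\heart$ (via suitable representatives and Lemma \ref{lem:X-in-CoCone-S-S-and-f-X-to-Y-factors-through-W-implies-f-factors-through-S}). Setting $\ol{\tilde r'} \deff \ol{\tilde r}\,\ol q^{-1}$ and $\ol{\tilde f'}\deff \ol{\tilde f}\,\ol p^{-1}$ in $\heart$, I obtain morphisms $X,Y\to A^{*}$ satisfying $\ol\pi\ol{\tilde r'} = \ol r$ and $\ol\pi\ol{\tilde f'} = \ol f$ in $\heart$.

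Next, I would transfer the problem to $\ol\SH$. Observe that $F\colon\ol\SH\to\heart$ is fully faithful: fullness is immediate since any morphism in $\heart(X,Y)$ with $X,Y\in\SH$ has a representative in $\CB(X,Y)$ descending to $\ol\SH(X,Y)$, while faithfulness is a direct consequence of Lemma \ref{lem:X-in-CoCone-S-S-and-f-X-to-Y-factors-through-W-implies-f-factors-through-S}. In particular $F$ reflects monomorphisms, epimorphisms and isomorphisms. Let $\alpha \in \ol\SH(X,A^{*})$ and $\beta \in \ol\SH(Y,A^{*})$ be the preimages under $F$ of $\ol{\tilde f'}$ and $\ol{\tilde r'}$. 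That $\beta$ is monic is direct: if $\ol{\tilde r'}\phi=0$ in $\heart$ then $\ol r\phi=\ol\pi\ol{\tilde r'}\phi=0$, forcing $\phi=0$ since $\ol r$ is monic. The main obstacle is showing $\ol{\tilde r'}$ is also epic in $\heart$: cancellation of epis fails in the semi-abelian setting, so this step will require exploiting the specific structure of the conflation $\begin{tikzcd}[column sep=0.5cm] Z\arrow[tail]{r}& A^{*}\arrow[two heads]{r}{\pi}& A\end{tikzcd}$ together with $\ol r$ being epic and $Z\in\CW$, via a careful application of Proposition \ref{prop:f-epic-in-heart-iff-C-f-in-U} to analyse the ``nearly-cokernel'' $C_{\tilde r'}$.

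Once $\beta$ is known to be regular and hence an isomorphism in the abelian category $\ol\SH$, I set $h\deff \beta^{-1}\alpha$, choose a representative in $\CB(X,Y)$, and verify
\[
G(h+[\CS]) = L_{\CR}(\ol{\tilde r'})^{-1}L_{\CR}(\ol{\tilde f'}) = L_{\CR}(\ol\pi\ol{\tilde r'})^{-1}L_{\CR}(\ol\pi\ol{\tilde f'}) = L_{\CR}(\ol r)^{-1}L_{\CR}(\ol f) = [\ol f,\ol r]_{\LF},
\]
where the second equality uses that $L_{\CR}(\ol\pi)$ is invertible in $\heart_{\CR}$.
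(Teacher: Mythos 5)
There is a genuine gap at the heart of your plan. After applying Lemma~\ref{lem:every-object-of-B-is-isomorphic-in-localisation-to-an-object-lying-in-CoConeS-S} to $A$ and pulling back the conflation $Z\rightarrowtail A^{*}\twoheadrightarrow A$ along $f$ and $r$ to obtain $Z\rightarrowtail P\xrightarrow{p} X$ and $Z\rightarrowtail Q\xrightarrow{q} Y$, you assert that $\ol{p}$ and $\ol{q}$ are \emph{isomorphisms} in $\heart$, and you then use $\ol{q}^{-1}$ and $\ol{p}^{-1}$ to build morphisms $X\to A^{*}$ and $Y\to A^{*}$. This cannot work in general: a conflation with cocone term in $\CW$ only yields a \emph{regular} morphism in $\heart$, not an isomorphism --- this is precisely the content of Lemma~\ref{lem:every-object-of-B-is-isomorphic-in-localisation-to-an-object-lying-in-CoConeS-S} (the map $\ol{\pi}\colon A^{*}\to A$ itself is only regular, not invertible). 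If such deflations always became invertible in $\heart$, then $F\colon\ol{\SH}\to\heart$ would already be essentially surjective and hence an equivalence, forcing $\heart$ to be abelian and the localisation at $\CR$ to be trivial; but the entire point of \S\ref{sec:localisation-of-an-integral-heart} is that $\heart$ is merely integral and $\heart_{\CR}$ is a genuine localisation. So the expressions $\ol{\tilde r}\,\ol{q}^{-1}$ and $\ol{\tilde f}\,\ol{p}^{-1}$ are not defined in $\heart$, and your construction of $\alpha,\beta\in\ol{\SH}$ does not go through. (You also flag the epicness of $\ol{\tilde r'}$ as an open step, and the claim $P,Q\in\SH$ needs care since $Z\in\CW$, not $\CS$; but the invertibility of $\ol{p},\ol{q}$ is the decisive break.)

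The paper's proof avoids this entirely by never trying to lift $f$ or $r$ to $\SH$-valued morphisms through a ``resolution'' of $A$. Instead it works with the source side: using $X\in\CoCone(\CS,\CS)$, enough projectives, and (ET$4^{\op}$), it produces a two-step filtration $K_{1}\rightarrowtail K_{0}\twoheadrightarrow X$ with the middle term $K_{0}$ mapping to $A$, uses that $\ol{r}$ is monic to get $T:=L_{r}\in\CT$ and a deflation $(\,r\;\,w\,)\colon Y\oplus W\twoheadrightarrow A$, and then shows by a chain of lifting arguments against projectives and conflations that $f$ itself factors through $(\,r\;\,w\,)$ in $\CB$. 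This gives $\ol{f}=\ol{r}\,\ol{u}$ directly in $\heart$, whence $[\ol{f},\ol{r}]_{\LF}=L_{\CR}(\ol{u})=G(u+[\CS](X,Y))$, with no detour through an intermediate object $A^{*}$ and no need to invert anything in $\heart$. If you want to rescue your strategy, you would have to work in $\heart_{\CR}$ rather than $\heart$ for the ``inverses'' of $\ol{p},\ol{q}$ --- but then producing the final $h\in\CB(X,Y)$ from a morphism $\beta^{-1}\alpha$ living in $\heart_{\CR}$ becomes essentially circular, since that is exactly the fullness you are trying to prove.
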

\begin{proof}
Let $X,Y\in\ol{\SH}=\CoCone(\CS,\CS)/[\CS]$ and let 
$[\ol{f},\ol{r}]_{\LF}\colon 
\begin{tikzcd}[column sep=0.7cm]
X \arrow{r}{\ol{f}}& A & Y\arrow{l}[swap]{\ol{r}}
\end{tikzcd}$ 
be an arbitrary morphism in 
$\heart_{\CR}(GX,GY)=\heart_{\CR}(X,Y)$. 
Since $X\in\CoCone(\CS,\CS)$ and $\CB$ has enough projectives, there are conflations 
$\begin{tikzcd}[column sep=0.7cm]
X\arrow[tail]{r}{} &S_{1} \arrow[two heads]{r}{} & S_{0}
\end{tikzcd}$ 
and 
\begin{equation}\label{eqn:conflation-K1-P1-S1}
    \begin{tikzcd}
    K_{1}\arrow[tail]{r}{a} &P_{1} \arrow[two heads]{r}{} & S_{1},
    \end{tikzcd}
\end{equation} 
with $S_{0},S_{1}\in \CS$ and $P_{1}\in\Proj\CB$. 
By (ET$4^{\op}$), there is a commutative diagram 
\begin{equation}\label{eqn:liu-localisations-lem-2-9-applied-to-X}
    \begin{tikzcd}
    K_{1} \arrow[equals]{r}\arrow[tail]{d}[swap]{b}& K_{1}\arrow[tail]{d}{a}& \\
    K_{0}\arrow[two heads]{d}[swap]{d}\arrow[tail]{r}{c} & P_{1}\arrow[two heads]{d}{}\arrow[two heads]{r}& S_{0}\arrow[equals]{d} \\
    X \arrow[tail]{r}& S_{1}\arrow[two heads]{r}& S_{0}
    \end{tikzcd}
\end{equation}
of conflations. 

As in the dual of Definition \ref{def:nearly-cokernel-of-morphism-in-heart}, 
we have a commutative diagram 
\[
\begin{tikzcd}
V\arrow[tail]{r}{} \arrow[equals]{d}& L_{r} \arrow[two heads]{r}{}\arrow{d} &Y \arrow{d}{r}\\
V\arrow[tail]{r}{} & W\arrow[two heads]{r}{w} & A
\end{tikzcd}
\] 
where $W\in\CW$. We also have that $T\deff L_{r}\in\CT$ by the dual of 
Proposition \ref{prop:f-epic-in-heart-iff-C-f-in-U}, 
as $\ol{r}$ is a monomorphism in $\heart$. 
Thus, by Proposition \ref{prop:variation-of-LN-prop-1-20-dualised-same-start-terms}, we have a conflation 
\[
\begin{tikzcd}[column sep = 1.2cm, ampersand replacement=\&]
T\arrow[tail]{r}{} \&Y\oplus W  \arrow[two heads]{r}{\begin{psmallmatrix}r & w\end{psmallmatrix}} \&A.
\end{tikzcd}
\]

We claim that the morphism $fd\colon K_{0}\to A$ factors through the morphism 
$(\, r \;\, w \,)$. 
Note that the canonical inclusion $\ol{\iota_{Y}}\colon Y\into Y\oplus W$ and canonical projection $\ol{\pi_{Y}}\colon Y\oplus W\onto Y$ are mutually inverse isomorphisms in $\heart=\CB/[\CW]$ as $W\in\CW$. 
Therefore, $\ol{(\, r \;\, w \,)}=\ol{r\pi_{Y}}$ is an epimorphism, and by 
Definition \ref{def:nearly-cokernel-of-morphism-in-heart} 
we have a commutative diagram 
\[
\begin{tikzcd}[ampersand replacement=\&]
Y\oplus W \arrow[tail]{r}{} \arrow{d}[swap]{\begin{psmallmatrix} r & w \end{psmallmatrix}}\& W' \arrow[two heads]{r}{} \arrow{d}\& S\arrow[equals]{d} \\
A \arrow[tail]{r}{} \& U\arrow[two heads]{r}{} \& S
\end{tikzcd}
\] 
in which $U\deff C_{\begin{psmallmatrix} r & w \end{psmallmatrix}}\in\CU$ because 
$\ol{(\, r \;\, w \,)}$ is an epimorphism in $\heart$. So, we have a conflation 
\[
\begin{tikzcd}[column sep=1.5cm, ampersand replacement=\&]
Y\oplus W\arrow[tail]{r}{\begin{psmallmatrix} r & w \\ e & g \end{psmallmatrix}} 
\& A\oplus W' \arrow[two heads]{r}{\begin{psmallmatrix} h & i \end{psmallmatrix}} 
\& U
\end{tikzcd}
\] 
by Proposition \ref{prop:variation-of-LN-prop-1-20-same-end-terms}. 
As 
$\begin{tikzcd}[column sep=0.7cm]
K_{0}\arrow[tail]{r}{c} & P_{1}\arrow[two heads]{r}& S_{0}
\end{tikzcd}$ 
is a conflation, we have an exact sequence 
\[
\begin{tikzcd}[column sep=1.3cm]
\CB(P_{1},U) \arrow{r}{\CB(c,U)}& \CB(K_{0},U)\arrow{r} & \BE(S_{0},U)=0
\end{tikzcd}\] 
by Proposition \ref{prop:exact-sequences-from-E-triangle}, where $\BE(S_{0},U)=0$ as $(\CS,\CT)$ is a cotorsion pair and $\CT = \CU$. 
Thus, there exists $j\colon P_{1}\to U$ such that 
$jc=(\, h \;\, i \,) \begin{psmallmatrix} fd \\0 \end{psmallmatrix}$. 
Since $P_{1}$ is projective and $(\, h \;\, i \,)$ is a deflation, there exists $\begin{psmallmatrix}k \\ l\end{psmallmatrix}\colon P_{1}\to A\oplus W'$ such that 
$(\, h \;\, i \,)\begin{psmallmatrix} k\\ l \end{psmallmatrix}=j$. 
In addition, as $(\, r \;\, w \,)$ is a deflation and we have a morphism $k\colon P_{1}\to A$, there exists $\begin{psmallmatrix} m\\n \end{psmallmatrix}\colon P_{1}\to Y\oplus W$ such that $(\, r \;\, w \,)\begin{psmallmatrix} m\\ n \end{psmallmatrix}=k$. 
Notice that 
$(\, h \;\, i \,)\left[
\begin{psmallmatrix} fd\\ 0  \end{psmallmatrix} - \begin{psmallmatrix} k\\ l \end{psmallmatrix}c\right] = jc-jc=0$. 
Hence, there exists $\begin{psmallmatrix}p\\ q\end{psmallmatrix}\colon K_{0}\to Y\oplus W$ such that $\begin{psmallmatrix} r & w\\ e & g \end{psmallmatrix}\begin{psmallmatrix} p\\ q \end{psmallmatrix}=\begin{psmallmatrix} fd\\ 0  \end{psmallmatrix} - \begin{psmallmatrix} k\\ l \end{psmallmatrix}c.$ 
In particular, we see that 
\[
fd 
= (\, r \;\, w \,)\begin{psmallmatrix} p\\q \end{psmallmatrix}+kc 
= (\, r \;\, w \,)\begin{psmallmatrix} p\\q \end{psmallmatrix} + (\, r \;\, w \,)\begin{psmallmatrix} m\\n \end{psmallmatrix}c
= (\, r \;\, w \,) \begin{psmallmatrix} p+mc \\ q+nc \end{psmallmatrix},
\] 
where $\begin{psmallmatrix} p+mc \\ q+nc \end{psmallmatrix}\colon K_{0}\to Y\oplus W$.

Therefore, by (ET$3^{\op}$), we get a commutative diagram
\begin{equation}\label{eqn:morphism-z-p-q-f}
    \begin{tikzcd}[column sep=1.5cm, row sep=1.2cm, ampersand replacement=\&]
        K_{1}\arrow[tail]{r}{b} \arrow[dotted]{d}{s}\& K_{0}\arrow[two heads]{r}{d}\arrow{d}{\begin{psmallmatrix} p+mc \\ q+nc \end{psmallmatrix}} \& X\arrow{d}{f}\\
        T\arrow[tail]{r}{} \& Y\oplus W \arrow[two heads]{r}[swap]{\begin{psmallmatrix} r&w \end{psmallmatrix}} \& A
    \end{tikzcd}
\end{equation}
Applying Proposition \ref{prop:exact-sequences-from-E-triangle} to conflation \eqref{eqn:conflation-K1-P1-S1}, there is an exact sequence 
\[
\begin{tikzcd}[column sep=1.3cm]
\CB(P_{1},T)\arrow{r}{\CB(a,T)}&\CB(K_{1},T)\arrow{r}{}&\BE(S_{1},T)=0,
\end{tikzcd}
\] 
where $\BE(S_{1},T)=0$ as $(\CS,\CT)$ is a cotorsion pair. 
So, there exists $t\colon P_{1}\to T$ such that $ta=s$. 
This implies $s=ta=tcb$, so by \cite[Cor.\ 3.5]{NakaokaPalu-extriangulated-categories-hovey-twin-cotorsion-pairs-and-model-structures} $f$ must factor through 
$(\, r \;\, w \,)$. 
Thus, there is a morphism 
$\begin{psmallmatrix} u\\ v \end{psmallmatrix}\colon X\to Y\oplus W$ such that 
$(\, r \;\, w \,)\begin{psmallmatrix}u\\ v  \end{psmallmatrix}=f$. 

In $\heart$ we then have $\ol{f}=\ol{(\, r \;\, w \,)}\ol{\begin{psmallmatrix}u\\ v  \end{psmallmatrix}}=\ol{ru}+\ol{wv}=\ol{ru}$ because, for example, $\ol{w}=0$ since $W\in\CW$. 
Hence, in $\heart_{\CR}$ we have that 
$[\ol{f},\ol{1_{A}}]_{\LF}=[\ol{ru},\ol{1_{A}}]_{\LF}=[\ol{r},\ol{1_{A}}]_{\LF}\circ [\ol{u},\ol{1_{Y}}]_{\LF}=L_{\CR}(\ol{r})L_{\CR}(\ol{u})$, which implies 
\[
[\ol{f},\ol{r}]_{\LF}=L_{\CR}(\ol{r})^{-1}\circ [\ol{f},\ol{1_{A}}]_{\LF}= L_{\CR}(\ol{u})=L_{\CR}F(u+[\CS](X,Y))=G(u+[\CS](X,Y)).
\] 
Thus, $G\colon \ol{\SH}\to \heart_{\CR}$ is a full functor.
\end{proof}
Therefore, we have found a fully faithful, dense functor $G\colon \ol{\SH}\to\heart_{\CR}$, which establishes the main result of this section.
\begin{thm}\label{thm:T-is-U-implies-localisation-of-H-at-regular-morphisms-is-equivalent-to-heart-of-cotorsion-pair-S-T}
Let
$(\CB,\BE,\fs)$ be an extriangulated category with enough projectives and injectives. Suppose $((\CS,\CT),(\CU,\CV))$
is a twin cotorsion pair on $\CB$ that satisfies $\CT=\CU$. Let
$\CR$ denote the class of regular morphisms in the heart $\heart$
of $((\CS,\CT),(\CU,\CV))$. Then the Gabriel-Zisman localisation
$\heart_{\CR}$ is equivalent to the heart $\ol{\SH}_{(\CS,\CT)}$
of the single twin cotorsion pair $(\CS,\CT)$.
\end{thm}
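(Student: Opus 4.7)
The plan is straightforward assembly: everything needed has already been developed in the preceding results of the section, so the proof of Theorem \ref{thm:T-is-U-implies-localisation-of-H-at-regular-morphisms-is-equivalent-to-heart-of-cotorsion-pair-S-T} reduces to packaging the three propositions on $G$ into a single equivalence statement. Specifically, we have constructed the additive functor $F\colon\ol{\SH}\to\heart$ as the unique factorisation of $Q_{[\CW]}\circ\iota$ through $Q_{[\CS]}$ (using that $\CS\subseteq\CW$ under the hypothesis $\CT=\CU$), and then set $G\deff L_{\CR}\circ F\colon\ol{\SH}\to\heart_{\CR}$, where $L_{\CR}$ is the Gabriel--Zisman localisation functor. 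Because $\heart$ is integral and quasi-abelian by Corollary \ref{cor:heart-is-integral-and-quasi-abelian-if-U-in-T-or-T-in-U}, the class $\CR$ admits a calculus of left fractions, so $\heart_{\CR}$ is an honest additive category and $G$ is an additive functor.

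First I would observe that $G$ is dense by Proposition \ref{prop:G-is-dense}; the crucial input here was Lemma \ref{lem:every-object-of-B-is-isomorphic-in-localisation-to-an-object-lying-in-CoConeS-S}, which guarantees that every object of $\heart$ is isomorphic in $\heart_{\CR}$ to an object coming from $\SH=\CoCone(\CS,\CS)$. Next I would invoke Proposition \ref{prop:G-is-faithful} to get that $G$ is faithful, with the key input being Lemma \ref{lem:X-in-CoCone-S-S-and-f-X-to-Y-factors-through-W-implies-f-factors-through-S}: for $X\in\CoCone(\CS,\CS)$, any morphism $X\to Y$ that factors through $\CW$ already factors through $\CS$. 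Finally, Proposition \ref{prop:G-is-full} gives fullness.

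Putting these three facts together, $G\colon\ol{\SH}\to\heart_{\CR}$ is a fully faithful and essentially surjective additive functor, hence an equivalence of categories. Since $\ol{\SH}=\ol{\SH}_{(\CS,\CT)}$ is (by \cite[Thm.\ 3.2]{LiuYNakaoka-hearts-of-twin-cotorsion-pairs-on-extriangulated-categories}) the heart of the single cotorsion pair $(\CS,\CT)$, and since $\heart_{\CR}$ is abelian by \cite[Thm.\ 4.8]{BuanMarsh-BM2}, the equivalence $G$ is the required one, completing the proof.

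In short, there is no remaining obstacle: all the delicate work (existence and well-definedness of $F$, density via the replacement lemma, faithfulness via the factoring lemma, and fullness via the lifting argument in Proposition \ref{prop:G-is-full}) has already been carried out, and the theorem is just the compilation of these statements into the single assertion that $G$ is an equivalence.
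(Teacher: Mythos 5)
Your proposal is correct and matches the paper's own argument: the paper also proves the theorem by simply assembling Propositions \ref{prop:G-is-dense}, \ref{prop:G-is-faithful} and \ref{prop:G-is-full} to conclude that $G$ is dense and fully faithful, hence an equivalence.
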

%

%


\begin{acknowledgements}
The authors would like to thank Thomas Br{\"u}stle and Robert J.\ Marsh for their support and guidance. 
This work began while the second author was visiting Sherbrooke, and he thanks the algebra group at Universit{\'e} de Sherbrooke for their hospitality and financial help. 
He also gratefully acknowledges financial support from the London Mathematical Society for the visit.

The authors are grateful to Dixy Msapato for spotting a typo in a previous version.

The first author is supported by Bishop's University, Universit{\'e} de Sherbrooke and NSERC of Canada. 
The second author is grateful for financial support from the EPSRC grant EP/P016014/1 ``Higher Dimensional Homological Algebra''. 
Some of this work was carried out during the second author's Ph.D.\ at the University of Leeds.
\end{acknowledgements}
\bibliographystyle{mybst}
\bibliography{references}
\end{document}